\documentclass[arxiv,reqno,twoside,a4paper,12pt]{amsart}

\usepackage{enumerate,amssymb,amsmath,mathrsfs}
\usepackage[colorlinks=true,linkcolor=blue,citecolor=blue]{hyperref}
\usepackage{amssymb,amsfonts,amsthm,amscd}
\usepackage[mathscr]{eucal}     
\usepackage{graphicx}
\usepackage[arrow, matrix, curve]{xy}
\usepackage{color}
\usepackage[margin=3cm]{geometry}
\definecolor{gray}{gray}{.75}
\definecolor{gray2}{gray}{.50}
\usepackage{tikz}

\usepackage{tikz-cd, pgfplots}

\linespread{1.05}
\usepackage[scaled]{helvet} 
\usepackage{courier} 
\usepackage[mathbf]{euler}

\theoremstyle{plain}
\newtheorem{thm}{Theorem}[section]
\newtheorem{prop}[thm]{Proposition}
\newtheorem{cor}[thm]{Corollary}
\newtheorem{lem}[thm]{Lemma}
\newtheorem{lemma}[thm]{Lemma}
\newtheorem{remark}[thm]{Remark}
\newtheorem{defn}[thm]{Definition}

\newtheorem{assump}[thm]{Assumption}

\newcommand{\R}{\mathbb{R}}
\newcommand{\C}{\mathbb{C}}

\newcommand{\supp}{\mathrm{supp}}
\newcommand{\dom}{\mathscr{D}}

\usepackage{xcolor}
\definecolor{darkgreen}{cmyk}{1,0,1,.2}
\definecolor{m}{rgb}{1,0.1,1}

\setcounter{tocdepth}{1}
\numberwithin{equation}{section}

\definecolor{qqwuqq}{rgb}{0,0,0}

\usepackage{color}

\begin{document}

\date{\today}
\title[Stability of $L^2-$invariants on stratified spaces]
{Stability of $L^2-$invariants on stratified spaces}

\author{Francesco Bei}
\address{Sapienza University, Rome, Italy} 
\email{francesco.bei@uniroma1.it}

\author{Paolo Piazza}
\address{Sapienza University, Rome, Italy} 
\email{paolo.piazza@uniroma1.it} 

\author{Boris Vertman} 
\address{Universit\"at Oldenburg, Germany} 
\email{boris.vertman@uni-oldenburg.de}

\subjclass[2010]{Primary 58A12; Secondary 58G12; 58A14.}
\keywords{Novikov-Shubin invariants, $L^2$-Betti numbers, stratified spaces}

\begin{abstract}
Let $\overline{M}$ be a compact smoothly stratified pseudo-manifold endowed 
with a wedge metric $g$. Let $\overline{M}_\Gamma$ be a Galois $\Gamma$-covering. 
Under additional assumptions on 
$\overline{M}$, satisfied for example by Witt pseudo-manifolds, we show that
the  $L^2$-Betti numbers and the Novikov-Shubin invariants are well defined. 
We then establish their invariance
under a smoothly stratified codimension-preserving homotopy equivalence,
thus extending results of Dodziuk, Gromov and Shubin  to these
pseudo-manifolds.
\end{abstract} 

\maketitle \ \\[-18mm]

\tableofcontents

\section{Introduction and statement of the main results}

\subsection{Historical overview}\label{history-subsection}

Let $(M,g)$ be a compact manifold with an infinite fundamental group. 
In his seminal paper \cite{Ati} Atiyah introduced $L^2$-Betti numbers associated 
to the universal covering of $M$, and conjectured their stability 
under homotopy equivalences. The conjecture was established  shorty after the appearance
of Atiyah's paper by Dodziuk in \cite{Dodziuk}.
Later, Novikov and Shubin \cite{Novikov-Shubin1, Novikov-Shubin2} introduced
new invariants associated to a Galois $\Gamma$-covering $M_\Gamma$ of $M$, with $\Gamma$ a finitely 
generated discrete group; these invariants 
measure the density of the continuous spectrum of the differential form Laplacian
on $M_\Gamma$ and are nowadays referred to as the
Novikov-Shubin invariants. Their stability under homotopy equivalence was proved by 
Gromov and Shubin in \cite{Gromov-Shubin, Gromov-Shubin-err}. 
\medskip

{\it The goal of this article
is to  address the same stability problems but for singular spaces, namely 
compact smoothly stratified (Thom-Mather) pseudo-manifolds.}

\subsection{Statement of the main results}

Consider a compact smoothly stratified (Thom-Mather) pseudo-manifold $\overline{M}$
with an iterated wedge metric $g$ in  its open interior $M \subset \overline{M}$.
Simply put, the singular neighborhoods of $\overline{M}$ can locally be viewed as fibrations of 
cones, as in Figure \ref{figure1}, where their cross sections may be singular again.
\medskip

\begin{figure}[h]
	\includegraphics[scale=0.5]{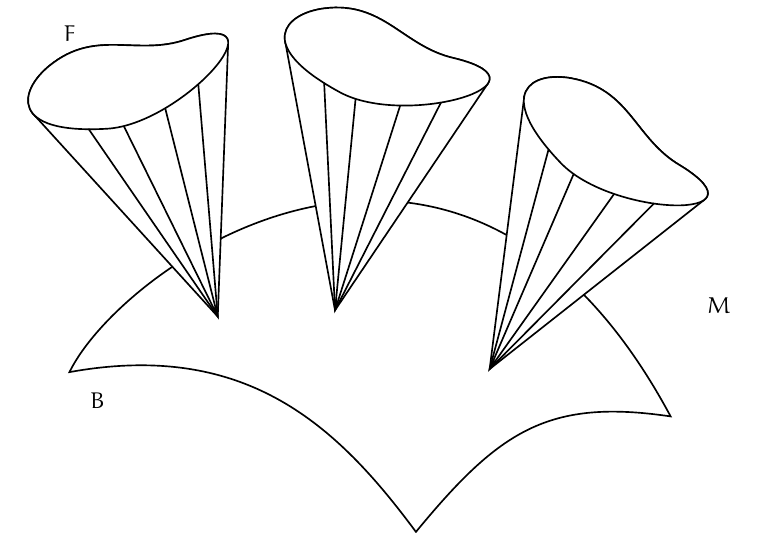}
	\caption{A cone bundle over $B$.}
	\label{figure1}
\end{figure}

Let $p: \overline{M}_\Gamma \to \overline{M}$ be a Galois $\Gamma$-covering
with $\Gamma$ being the group of deck transformations $\Gamma$. It is again a stratified pseudo-manifold and 
the metric $g$ lifts to an iterated wedge metric $g_\Gamma$ in the 
interior $M_\Gamma$ of $\overline{M}_\Gamma$. The corresponding minimal and maximal
Hilbert complexes $(\dom_{\min}(M_\Gamma), d_\Gamma)$ and $(\dom_{\max}(M_\Gamma), d_\Gamma)$ arise from the de Rham complex 
of compactly supported differential forms on $M_\Gamma$ by 
choosing minimal and maximal closed extensions of the differential in $L^2$, respectively. Note that in general 
$$
(\dom_{\min}(M_\Gamma), d_\Gamma) \neq (\dom_{\max}(M_\Gamma), d_\Gamma).
$$
The $L^2$-Betti numbers are defined as von Neumann $\Gamma$-dimensions of the corresponding reduced $L^2$-cohomologies 
\begin{equation}
\begin{split}
b^*_{(2),\min}(\overline{M}_\Gamma) := \dim_\Gamma \overline{H}^*_{(2)}(\dom_{\min}(M_\Gamma), d_\Gamma), \\
b^*_{(2),\max}(\overline{M}_\Gamma) := \dim_\Gamma \overline{H}^*_{(2)}(\dom_{\max}(M_\Gamma), d_\Gamma).
\end{split}
\end{equation}
The Novikov-Shubin invariants $\alpha_{*,\min}(\overline{M}_\Gamma)$ and $\alpha_{*,\max}(\overline{M}_\Gamma)$ are in turn defined  in terms of the Hodge Laplacians associated to the minimal and maximal Hilbert complexes 
$(\dom_{\min}(M_\Gamma), d)$ and $(\dom_{\max}(M_\Gamma), d)$,
respectively. These invariants will be introduced explicitly in \S \ref{finiteness-section} below. 
We can now state our first main theorem.

\begin{thm}\label{main1} Let $p: \overline{M}_\Gamma \to \overline{M}$ be a Galois $\Gamma$-covering of a compact smoothly stratified
pseudo-manifold $\overline{M}$ endowed with an iterated wedge metric. If the Assumption \ref{fundamental} holds true, we have the following properties:
\begin{enumerate}
\item The $L^2$-Betti numbers $b^*_{(2),\min}(\overline{M}_\Gamma)$
and $b^*_{(2),\max}(\overline{M}_\Gamma)$ are finite. 
\item The Novikov-Shubin invariants 
$\alpha_{*,\min}(\overline{M}_\Gamma)$ and $\alpha_{*,\max}(\overline{M}_\Gamma)$ are well-defined.
\end{enumerate}
\end{thm}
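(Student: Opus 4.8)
The plan is to reduce the two statements to a single analytic input: a local, geometrically natural functional-analytic fact about the heat operator (or the spectral resolution) of the Hodge Laplacians associated to the minimal and maximal closed extensions of $d_\Gamma$ on $M_\Gamma$. Concretely, I would proceed as follows.

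First I would recall the abstract framework of Hilbert complexes of $\Gamma$-modules and the associated von Neumann $\Gamma$-dimension theory. The complexes $(\dom_{\min/\max}(M_\Gamma), d_\Gamma)$ are Hilbert complexes in the sense of Brüning–Lesch, and because $p$ is a Galois $\Gamma$-covering the spaces $L^2\Omega^*(M_\Gamma)$ carry a commuting $\Gamma$-action turning them into finitely generated Hilbert $\mathcal{N}\Gamma$-modules (finite generation coming from the compactness of $\overline{M}$: one covers $\overline{M}$ by finitely many distinguished neighborhoods and uses a $\Gamma$-equivariant partition of unity upstairs). In this language, $b^*_{(2),\min/\max}(\overline{M}_\Gamma)$ is the $\Gamma$-dimension of the reduced cohomology, and the Novikov-Shubin invariant $\alpha_*$ is read off from the behavior near $0$ of the spectral density function $F(\lambda) = \dim_\Gamma\big(\chi_{[0,\lambda]}(\Delta_*)\big)$ of the relevant Hodge Laplacian. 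Both quantities are finite/well-defined as soon as these spectral projections have finite $\Gamma$-dimension, i.e. as soon as $\Delta_*$ is $\Gamma$-Fredholm in the sense of being of determinant class at the level of its low-energy projections — equivalently, as soon as $e^{-t\Delta_*}$ is $\Gamma$-trace class for every $t>0$, with $\Tr_\Gamma(e^{-t\Delta_*}) < \infty$.

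Second, I would establish exactly that heat-trace finiteness, and this is where Assumption \ref{fundamental} enters. On the regular part $M_\Gamma$ the metric $g_\Gamma$ is, near each stratum, an iterated wedge (incomplete) metric, so one must control the heat kernel both in the interior and, crucially, near the singular strata. The interior contribution is handled as in the smooth covering case: locally $e^{-t\Delta}$ compares to the Euclidean heat kernel, and $\Gamma$-invariance of the kernel on the diagonal together with compactness of a fundamental domain gives a finite $\Gamma$-trace. The singular contribution is controlled by the microlocal / edge heat-kernel analysis available under Assumption \ref{fundamental} (the same hypothesis that, on the closed pseudo-manifold $\overline{M}$, forces the de Rham complex to be Fredholm with the expected Hodge-theoretic properties — this is where the Witt-type condition is used to ensure the induced operators on the links have a spectral gap around the critical weights, so that $\Delta_{\min}$ and $\Delta_{\max}$ have discrete spectrum downstairs and, upstairs, heat kernels that decay fast enough near the strata). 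Lifting this local heat-kernel description to $M_\Gamma$ is purely formal once one has it on $M$, because the heat kernel is local and $\Gamma$-equivariant; integrating its pointwise trace over a fundamental domain then yields $\Tr_\Gamma(e^{-t\Delta_{\min/\max}}) < \infty$ for all $t>0$.

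Third, from heat-trace finiteness I would conclude both claims. Finiteness of $b^*_{(2),\min/\max}$: the reduced $L^2$-cohomology is $\ker \Delta_* / (\text{closure of image})$, but with a spectral gap the kernel is already the full low-energy space, so $b^* = \dim_\Gamma \ker\Delta_* = \lim_{t\to\infty}\Tr_\Gamma(e^{-t\Delta_*}) \le \Tr_\Gamma(e^{-\Delta_*}) < \infty$. Well-definedness of $\alpha_*$: with $\Tr_\Gamma(e^{-t\Delta_*})<\infty$ the spectral density function $F(\lambda)$ is finite for every $\lambda>0$ and right-continuous, monotone, so the quantity $\alpha_* = \liminf_{\lambda\to 0^+} \frac{\log\big(F(\lambda)-F(0)\big)}{\log \lambda} \in (0,\infty]$ is unambiguously defined (using the standard convention when $F(\lambda)-F(0)$ vanishes for small $\lambda$, i.e. when there is a spectral gap, in which case $\alpha_* = \infty^+$). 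I expect the main obstacle to be the second step: precisely, extracting from Assumption \ref{fundamental} the near-singularity decay of the heat kernel for \emph{both} the minimal and the maximal extensions simultaneously, since these correspond to different boundary conditions at the strata and the model operators on the cones must be shown to be positive (or to have their small eigenvalues under control) in each case; once the Witt/horizontal-Laplacian spectral gap is in hand, the passage to the covering and the deduction of finiteness are routine.
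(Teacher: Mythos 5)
Your top-level reduction is the same as the paper's: both claims follow once the spectral projections $\chi_{[0,\lambda]}(\Delta_{\Gamma,\mathrm{abs/rel}})$ (equivalently, the heat operators, or $(\mathrm{Id}+\Delta_{\Gamma,\mathrm{abs/rel}})^{-n}$ for large $n$) are shown to be $\Gamma$-trace class; from there, finiteness of $\dim_\Gamma\ker$ and Fredholmness of the complexes, hence well-definedness of the Novikov--Shubin invariants, are routine, exactly as in the paper's final step. (Two side remarks: $L^2\Omega^*(M_\Gamma)$ is \emph{not} a finitely generated $\mathscr{N}\Gamma$-module, and no spectral gap is needed for $\lim_{t\to\infty}\Tr_\Gamma(e^{-t\Delta})=\dim_\Gamma\ker\Delta$; neither slip is essential.)

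The genuine gap is in your second step, which is the heart of the matter. You justify the $\Gamma$-trace class property on the covering by asserting that ``the heat kernel is local and $\Gamma$-equivariant,'' so that a heat-kernel description on the compact base lifts ``purely formally'' to $M_\Gamma$. The heat kernel is not local (infinite propagation speed), and its off-diagonal decay near the singular strata of the \emph{covering} is precisely what must be proved, not assumed; moreover you misread Assumption \ref{fundamental}, which is not a Witt/link spectral-gap condition but an off-diagonal Hilbert--Schmidt condition on the resolvent $(i+D_{\mathrm{abs/rel}})^{-1}$ on the base (the Witt hypothesis only enters in verifying the assumption in examples). The paper's actual mechanism, absent from your proposal, is: (i) on the compact base, $(i+D_{\mathrm{abs/rel}})^{-1}$ lies in some Schatten class (\'Alvarez--Calaza), which transfers to evenly covered patches upstairs and shows that compositions $\prod_k \phi_k(i+D_{\Gamma})^{-1}$ of locally supported cutoff-resolvents are Hilbert--Schmidt; (ii) a commutator argument (Proposition \ref{key-trick}) transfers the off-diagonal Hilbert--Schmidt property of Assumption \ref{fundamental} from the base to the covering, where no global Schatten property can hold; (iii) writing $(\mathrm{Id}+\Delta_\Gamma)^{-n}$ as $(i+D_\Gamma)^{-n/2}(i-D_\Gamma)^{-n/2}$ and repeatedly inserting cutoffs, the ``far'' terms are handled by (ii) and the ``near'' composition by (i), yielding the $\Gamma$-trace class property. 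Without an argument of this kind (or a genuine finite-propagation-speed/parametrix substitute for your locality claim, valid up to the strata for both the minimal and maximal extensions), the crucial step of your proof does not go through.
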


\noindent
Assumption \ref{fundamental} is a technical assumption
on the resolvent $(i+D_{{\rm abs/rel}})^{-1}$ on $(M,g)$,
with $D_{{\rm abs/rel}}$ being the absolute and relative self-adjoint extensions
$$
D_{\textup{rel}}:= d_{\min} + d^*_{\min}, \quad \text{and}
\quad D_{\textup{abs}}:= d_{\max} + d^*_{\max}\,.
$$
It is satisfied on stratified pseudo-manifolds with isolated singularities and on Witt spaces of arbitrary depth.,
as shown in Proposition \ref{witt-examples} below.

In order to state our second main result 
we pause for a moment and explain in detail the result of Gromov and Shubin \cite{Gromov-Shubin}. Let
$X$ and $Y$ be connected smooth compact manifolds without boundary and let $f:X\to Y$ be a homotopy equivalence.
We now consider two connected Galois $\Gamma$-coverings $X_\Gamma\xrightarrow{p} X$ and $Y_\Gamma\xrightarrow{q} 
Y$; once we fix 
base points $x\in X, \tilde{x}\in p^{-1}(x)$, $f(x)\in Y$ and $\tilde{y}\in q^{-1}(f(x))$ we have  well
defined surjective homomorphisms
$$\pi_1 (X,x)\xrightarrow{j_X} \Gamma\,,\quad \pi_1 (Y,f(x))\xrightarrow{j_Y} \Gamma.$$
We shall explain all this later in the paper.
Gromov and Shubin requires the following diagram to commute:
\begin{figure}[h!]
\begin{center}
\begin{tikzpicture}

\draw[->] (5.6,0) -- (7,0);
\node at (4.5,0) {$\pi_1 (Y,f(x))$};
\node at (7.5,0) {$\Gamma$};
\node at (6.3,-0.5) {$j_Y$};

\draw[->] (5.4,2) -- (6.8,2);
\node at (4.5,2) {$\pi_1 (X,x)$};
\node at (7.5,2) {$\Gamma$};
\node at (6,2.5) {$j_X$};

\draw[->] (4.5,1.5) -- (4.5,0.5);
\draw[->] (7.5,1.5) -- (7.5,0.5);

\node at (4,1) {$f_*$};
\node at (7.9,1) {${\rm id}$};

\end{tikzpicture}
\end{center}
\end{figure}

We shall weaken this assumption of Gromov and Shubin, that is $j_X=j_Y\circ f_*$,  and require only that
\begin{equation}\label{comp}
f_* ({\rm Ker} (j_X))={\rm Ker} (j_Y)\,.
\end{equation}
This is  condition \eqref{compatibility} appearing in the statement of our next result. In order to state 
it, we  need to introduce one final ingredient $-$ a smoothly stratified codimension-preserving homotopy 
equivalence between compact smoothly stratified pseudo-manifolds. This is a homotopy equivalence that maps singular strata onto singular strata and
preserves co-dimension. We will be more precise in \S \ref{stability-section} below. Our second main result now reads as follows.

\begin{thm}\label{main2} Let $\overline{M}$ and $\overline{N}$ be 
oriented compact smoothly stratified pseudo-manifolds which satisfy 
Assumption \ref{fundamental};
assume that there exist two Galois $\Gamma$-coverings $\overline{M}_{\Gamma}$, $\overline{N}_{\Gamma}$ and
a smoothly stratified codimension-preserving homotopy 
equivalence $f: \overline{M} \to \overline{N}$ satisfying condition \eqref{comp} (see \eqref{compatibility} 
for precise notation). Then
\begin{align*}
b^*_{(2), \max}(\overline{M}_{\Gamma}) &= b^*_{(2),\max}(\overline{N}_{\Gamma}), \quad 
b^*_{(2),\min}(\overline{M}_{\Gamma}) = b^*_{(2),\min}(\overline{N}_{\Gamma}), \\
\alpha_{*, \max}(\overline{M}_{\Gamma}) &= \alpha_{*, \max}(\overline{N}_{\Gamma}), \qquad 
\alpha_{*,\min}(\overline{M}_{\Gamma}) = \alpha_{*,\min}(\overline{N}_{\Gamma}).
\end{align*}
\end{thm}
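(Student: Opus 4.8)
The plan is to reduce Theorem \ref{main2} to a purely Hilbert-complex statement and then invoke the abstract stability machinery of Gromov--Shubin, adapted to the von Neumann $\Gamma$-dimension setting, exactly as in the manifold case but with the de Rham complex of compactly supported forms replaced by the minimal/maximal $L^2$-complexes. The key point, which is what Assumption \ref{fundamental} buys us via Theorem \ref{main1}, is that both $(\dom_{\min}(M_\Gamma), d_\Gamma)$ and $(\dom_{\max}(M_\Gamma), d_\Gamma)$ are Hilbert complexes of $\Gamma$-modules that are \emph{Fredholm} in the appropriate sense: their reduced cohomologies have finite $\Gamma$-dimension and the Novikov--Shubin invariants are well defined. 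So the first step is to record that the relevant $L^2$-invariants are invariants of the \emph{$\Gamma$-homotopy type} of the corresponding Hilbert complex: if two Hilbert complexes of finite-type Hilbertian $\Gamma$-modules are chain homotopy equivalent through bounded $\Gamma$-equivariant maps, then they have equal $L^2$-Betti numbers and equal Novikov--Shubin invariants. This is the abstract algebraic core and follows by the same argument as in \cite{Gromov-Shubin} (morphisms induce maps on reduced cohomology and on the spectral density functions, chain homotopies show these are mutually inverse and that the density functions are dilatationally equivalent).

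The second step is the geometric input: construct, from the smoothly stratified codimension-preserving homotopy equivalence $f: \overline{M} \to \overline{N}$, a bounded $\Gamma$-equivariant chain map $f^\sharp$ between the lifted $L^2$-complexes on $M_\Gamma$ and $N_\Gamma$ (in both the $\min$ and $\max$ versions), together with a bounded $\Gamma$-equivariant homotopy inverse and bounded chain homotopies. Here is where condition \eqref{comp}, i.e.\ $f_*(\mathrm{Ker}(j_X)) = \mathrm{Ker}(j_Y)$, enters: it guarantees that the lift of $f$ to the universal covers descends to a $\Gamma$-equivariant map $\widetilde{M}_\Gamma \to \widetilde{N}_\Gamma$ between the chosen $\Gamma$-coverings (possibly after composing with a deck transformation / an automorphism of $\Gamma$), so that pullback of forms is $\Gamma$-equivariant. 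The boundedness on $L^2$ and the compatibility with the minimal/maximal domains is exactly where ``smoothly stratified'' and ``codimension-preserving'' are used: pullback under such a map is, near each stratum, modelled on a fibrewise-plus-base map of local cone bundles of the type in Figure \ref{figure1}, with controlled behaviour of the Jacobian in the radial variable, so that $f^\sharp$ maps $L^2$-forms to $L^2$-forms and preserves the core of compactly supported smooth forms, hence passes to both closed extensions. One then smooths $f$ and a homotopy-inverse $h$ and the homotopies $f\circ h \simeq \mathrm{id}$, $h \circ f \simeq \mathrm{id}$ to stratified maps of the same controlled type (using that the smoothly stratified codimension-preserving category admits such smoothing/approximation, as recalled in \S \ref{stability-section}), obtaining the bounded chain homotopies on $L^2$.

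The third step simply combines the two: the complexes on $\overline{M}_\Gamma$ and $\overline{N}_\Gamma$ are, in each degree and for both boundary conditions, $\Gamma$-homotopy equivalent through bounded maps, so by Step~1 all four families of invariants coincide. One must separately check that $f^\sharp$ really does intertwine the $\min$ complexes with the $\min$ complexes and the $\max$ with the $\max$; this is where one uses that $f$ being codimension-preserving means the pullback respects the stratification-compatible extension conditions on both sides (the minimal extension being governed by vanishing conditions near the strata that are preserved under the controlled maps).

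I expect the \textbf{main obstacle} to be Step~2: producing a genuinely \emph{bounded}, $L^2$-equivariant chain homotopy equivalence at the level of the singular $L^2$-de Rham complexes. On smooth manifolds Dodziuk and Gromov--Shubin can use simplicial approximation and the Whitney--de Rham comparison, but on a stratified pseudo-manifold with a wedge metric one has to control the pullback operator all the way into the singular strata, in each codimension, and verify both $L^2$-boundedness and preservation of the minimal (resp.\ maximal) domain. Handling this uniformly across strata of arbitrary depth, and checking that the smoothing procedure stays inside the controlled (codimension-preserving, wedge-compatible) category so that the homotopies are themselves bounded on $L^2$, is the technical heart of the argument; the von Neumann-dimension bookkeeping and the abstract invariance in Steps~1 and~3 are then essentially formal, modulo Theorem \ref{main1} which already supplies the Fredholm/finiteness properties needed to make $\dim_\Gamma$ and the Novikov--Shubin invariants meaningful.
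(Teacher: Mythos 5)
Your overall architecture (abstract Gromov--Shubin stability for $\mathscr{N}\Gamma$-Hilbert complexes, a geometric chain homotopy equivalence induced by $f$, and the use of \eqref{comp} to match the two coverings) is the right skeleton and agrees with the paper's Steps 1--3. But there is a genuine gap at the heart of your Step 2: you propose to use the pullback $f^*$ itself (after smoothing $f$, $h$ and the homotopies) as the bounded chain map, arguing that the ``controlled behaviour of the Jacobian in the radial variable'' yields $L^2$-boundedness. This fails: as the paper emphasizes, the pullback by a homotopy equivalence does not induce a bounded map on $L^2$-forms \emph{even in the smooth closed case} (a smooth map can send sets of positive measure into null sets, so $\omega\circ f$ need not even be well defined for an $L^2$-class $\omega$, let alone bounded). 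The paper's actual device is the Hilsum--Skandalis replacement ${}^{w}HS(f)$ of Theorem \ref{HS-thm}: one pulls back through the submersion $\exp\circ\,\mathbb{B}\widetilde{f}$ from a disc subbundle of ${}^{e}TN$, wedges with a Thom form, and integrates over the fibres; boundedness then follows from bounded-geometry estimates for the complete edge metrics, and the chain-homotopy identities come from Proposition \ref{chain-homotopy1} and Corollary \ref{chain-corollary}. Without this replacement your chain map does not exist as a bounded operator, so your Step 1 has nothing to which it can be applied.

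A second, smaller but real, gap: you plan to check directly that $f^\sharp$ intertwines the minimal complexes with the minimal complexes. The paper cannot and does not do this --- the homotopy operators $K$ produced by the Hilsum--Skandalis construction do not preserve compact support, so the chain homotopy equivalence of Corollary \ref{chain-corollary} is established only for the \emph{maximal} complexes; the minimal case is then deduced from the maximal one by Poincar\'e duality, $*\Delta_{k,\mathrm{rel}}=\Delta_{m-k,\mathrm{abs}}*$, which is precisely why orientability appears in the hypotheses of Theorem \ref{main2}. Your proposal never invokes the orientation assumption, a sign that the minimal case is not actually handled. Finally, note that the paper works downstairs on $\overline{M}$, $\overline{N}$ with Mishchenko-bundle coefficients (via Proposition \ref{minmax-thm}) and identifies $f^*\mathscr{E}_{\overline{N}_{\Gamma}}\cong\mathscr{E}_{f^*\overline{N}_{\Gamma}}\cong\mathscr{E}_{\overline{M}_{\Gamma}}$ using \eqref{compatibility}; this is equivalent in content to your equivariant-lift formulation, but it is the setting in which the boundedness estimates are actually carried out.
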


As a consequence of Corollary \ref{pullback-u}, the condition \eqref{compatibility} is 
always true when we deal with universal coverings. Therefore we have the following result.

\begin{cor}
\label{universal-c}
Let $\overline{M}$ and $\overline{N}$ be two oriented 
compact smoothly stratified pseudo-manifolds which satisfy 
Assumption \ref{fundamental};
assume that there exists a smoothly stratified codimension-preserving homotopy
equivalence $f: \overline{M} \to \overline{N}$. Let $b^*_{(2),\min / \max}(\overline{M})$, $b^*_{(2),\min / \max}(\overline{N})$, $\alpha_{*,\min / \max}(\overline{M})$ and $\alpha_{*,\min / \max}(\overline{N})$ denote the maximal/minimal $L^2$-Betti numbers and Novikov-Shubin invariants of $\overline{M}$ and $\overline{N}$ computed with respect to the corresponding universal coverings. We then have
\begin{align*}
b^*_{(2),\max}(\overline{M}) &= b^*_{(2), \max}(\overline{N}), \quad 
b^*_{(2),\min }(\overline{M}) = b^*_{(2),\min }(\overline{N}), \\
\alpha_{*,\max}(\overline{M}) &= \alpha_{*,\max}(\overline{N}), \qquad 
\alpha_{*,\min }(\overline{M}) = \alpha_{*,\min }(\overline{N}).
\end{align*}
\end{cor}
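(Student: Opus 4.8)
The plan is to deduce Corollary \ref{universal-c} from Theorem \ref{main2} by checking that the hypotheses of the latter are automatically met when $\overline{M}_\Gamma$ and $\overline{N}_\Gamma$ are taken to be the universal coverings. Since $f \colon \overline{M} \to \overline{N}$ is a homotopy equivalence, it induces an isomorphism $f_* \colon \pi_1(\overline{M}, x) \xrightarrow{\sim} \pi_1(\overline{N}, f(x))$ on fundamental groups. We take $\Gamma := \pi_1(\overline{M},x) \cong \pi_1(\overline{N}, f(x))$, with the two maps $j_{\overline M} \colon \pi_1(\overline{M},x) \to \Gamma$ and $j_{\overline N} \colon \pi_1(\overline{N}, f(x)) \to \Gamma$ chosen to be $\id$ and $f_*^{-1}$ respectively, so that the corresponding $\Gamma$-coverings are precisely the universal ones. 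Then $\mathrm{Ker}(j_{\overline M}) = \{e\}$ and $\mathrm{Ker}(j_{\overline N}) = \{e\}$, whence $f_*(\mathrm{Ker}(j_{\overline M})) = \mathrm{Ker}(j_{\overline N})$ is satisfied trivially, i.e. condition \eqref{comp} (equivalently \eqref{compatibility}) holds. This is exactly the content of Corollary \ref{pullback-u} referred to in the excerpt, which guarantees that the compatibility condition \eqref{compatibility} is always available for universal coverings; I would simply invoke it.

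With the hypotheses verified, Theorem \ref{main2} applies verbatim and yields the four equalities
\begin{align*}
b^*_{(2), \max}(\overline{M}_{\Gamma}) &= b^*_{(2),\max}(\overline{N}_{\Gamma}), \quad
b^*_{(2),\min}(\overline{M}_{\Gamma}) = b^*_{(2),\min}(\overline{N}_{\Gamma}), \\
\alpha_{*, \max}(\overline{M}_{\Gamma}) &= \alpha_{*, \max}(\overline{N}_{\Gamma}), \qquad
\alpha_{*,\min}(\overline{M}_{\Gamma}) = \alpha_{*,\min}(\overline{N}_{\Gamma}),
\end{align*}
which, under the identifications above, are precisely the asserted equalities between the invariants of $\overline{M}$ and $\overline{N}$ computed with respect to their universal coverings. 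One should also observe that $\overline{M}$ and $\overline{N}$ are assumed to satisfy Assumption \ref{fundamental}, so that Theorem \ref{main1} ensures all the invariants in sight are actually well-defined and finite, making the statement meaningful.

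The only genuine point requiring care — and hence the main (mild) obstacle — is the bookkeeping of base points and the precise sense in which the universal cover of $\overline{M}$ and the universal cover of $\overline{N}$ are "the same" $\Gamma$-covering after transporting the structure group along $f_*$. This is a standard covering-space argument, but in the stratified setting one must make sure that $f$ being a smoothly stratified codimension-preserving homotopy equivalence is compatible with the covering theory used in \S \ref{stability-section}; this is precisely what Corollary \ref{pullback-u} is designed to handle, so no new work is needed. I therefore expect the proof to consist of a short paragraph: set $\Gamma = \pi_1(\overline M)$, invoke Corollary \ref{pullback-u} to get \eqref{compatibility}, and apply Theorem \ref{main2}.
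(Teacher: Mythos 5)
Your proposal is correct and matches the paper's argument: the paper's proof of Corollary \ref{universal-c} is exactly the one-line deduction from Theorem \ref{main2} together with Corollary \ref{pullback-u}, which guarantees that the universal coverings satisfy condition \eqref{compatibility} (equivalently, that $f^*\overline{N}_\Gamma$ is again a universal covering of $\overline{M}$). Your explicit verification that both kernels are trivial after identifying $\Gamma$ with $\pi_1(\overline{M},x)$ via $f_*$ is a correct unwinding of the same point.
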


As anticipated in \S \ref{history-subsection}, this theorem extends the result of Dodziuk \cite{Dodziuk} for $L^2$-Betti numbers
and of Gromov and Shubin \cite{Gromov-Shubin} for Novikov-Shubin invariants
to the setting of smoothly stratified Thom-Mather pseudo-manifolds. 

\begin{remark}
For smooth compact manifolds it is possible to give a topological definition of 
 the $L^2$-Betti numbers and of the Novikov-Shubin invariants. The proof of the equivalence of these two definitions 
 is a non-trivial result, treated in great detail in \cite{Lueck-book}. We believe that it should be possible to employ
 $L^2$-intersection chains \`a la  Goresky-MacPherson on the universal covering of a Witt space in order to give a topological definition of the
 invariants treated in this article. While the definition does not pose particular problems, the proof of the equivalence between the analytic definition given here and this topological definition appears to be rather involved.
We plan to look into these questions in a subsequent paper. 
\end{remark}

\subsection{Background and notation}

This paper builds strongly upon the classical notions of stratified spaces, their Galois coverings and the
corresponding von Neumann theory. A careful introduction of these concepts is out of scope of the present note
and is presented in full detail in other references. We list the main objects of interest
for us, their notation and precise references where these objects are introduced. 

\begin{enumerate}
\item \textbf{stratified space $\overline{M}$, its open interior $M$ and resolution $\widetilde{M}$.}\medskip

\noindent A compact smoothly stratified (Thom-Mather) pseudo-manifold $\overline{M}$ with open 
interior $M \subset \overline{M}$ is defined, for example, in \cite[Definition 2.1]{ALMP3}. The precise definition is rather 
involved, due to additional Thom-Mather conditions, see \cite{Mather}, which guarantee that such  $\overline{M}$ 
can be resolved into a compact manifold with corners and boundary fibration structure, see \cite[Proposition 2.5]{package} and \cite[Definition 2]{package}. 
Following \cite{package} we denote this resolution by $\widetilde{M}$.
Further references are  e.g. \cite{Brasselet}, \cite{Verona}, see also \cite{ALMP2,Alb}. \medskip

\item \textbf{iterated incomplete wedge metric $g$ and complete edge metric $\rho_M^{-2}g$.}\medskip

\noindent An (iterated incomplete) wedge metric $g$ on the open interior $M$ is defined in 
\cite[Definition 5]{package} (note that there such a metric is called 'iterated edge'). If $\rho_M$
is a total boundary function on $\widetilde{M}$, i.e. a smooth function that vanishes to first order at all boundary faces, 
then a complete edge metric is defined by $\rho_M^{-2}g$ as in \cite[(3.2)]{package}\medskip

\item \textbf{Galois covering $\widetilde{M}_\Gamma$ and von Neumann theory.} \medskip

\noindent A Galois covering $\pi: \overline{M}_{\Gamma} \to \overline{M}$
with Galois group $\Gamma$ is again a smoothly stratified (Thom-Mather) pseudo-manifold with the smooth open and 
dense stratum $M_\Gamma$ being a Galois covering of $M$. Any singular stratum $Y_\Gamma$ of 
$\overline{M}_{\Gamma}$ is a Galois covering of a singular stratum $Y$ in $\overline{M}$ of same depth. The lift $g_\Gamma$ of the wedge metric $g$ defines a $\Gamma$-invariant wedge metric on 
$M_\Gamma$. We refer to e.g. \cite[\S 7.1]{PiVe1} and \cite[\S 2.4]{PiVe2} for more details.
\medskip

\item \textbf{edge and wedge tangent bundles ${}^{e}TM, {}^{e}TM_\Gamma$ and ${}^{w}TM, {}^{w}TM_\Gamma$.} \medskip

\noindent The edge tangent bundle ${}^{e}TM$ is a vector bundle over $\widetilde{M}$, defined in \cite[\S 4.1]{package} (note that there
a different notation ${}^{ie}TM$ is used, with the additional upper script i standing for 'iterated'). 
An efficient definition of the wedge tangent bundle ${}^{w}TM$ is obtained by specifying the
smooth sections (of its dual bundle) as 
\begin{equation*}
C^\infty(\widetilde{M}, {}^{w}T^*M) := \rho_M \, C^\infty(\widetilde{M}, {}^{e}T^*M).
\end{equation*}
One defines ${}^{e}TM_\Gamma, {}^{e}TM_\Gamma$ in the same way. 
Consider the $L^2$-completions 
\begin{equation*}
\begin{split}
&  L^2\Omega^*(M,g) :=L^2(M, \Lambda^* ({}^{w}T^*M),g), \\
& L^2\Omega^*(M_{\Gamma},g_{\Gamma}):=L^2(M_\Gamma, \Lambda^* ({}^{w}T^*M_\Gamma),g_\Gamma).
\end{split}
\end{equation*}
We shall often simply abbreviate the spaces as $L^2\Omega^*(M)$ and
$L^2\Omega^*(M_{\Gamma})$, respectively, without keeping track of the metrics.
\medskip

\item \textbf{Minimal and maximal complexes $(\dom_{\min}(M_\Gamma), d_\Gamma)$ and 
$(\dom_{\max}(M_\Gamma), d_\Gamma)$.} \medskip

\noindent The notion of Hilbert complexes has been carefully introduced in \cite{Hilbert}.
The minimal and maximal Hilbert complexes are defined e.g. in \cite[Lemma 3.1]{Hilbert}. They are
defined by the minimal and maximal domains $\dom_{\min / \max}(M_\Gamma):= \dom(d_{\Gamma, \, \min / \max})$ 
in $L^2\Omega^*(M_{\Gamma},g_{\Gamma})$ of de Rham differentials on $M_\Gamma$. 
\end{enumerate}

\noindent \emph{Acknowledgements.} The authors would like to acknowledge 
interesting discussions with Pierre Albin and Matthias Lesch. The third author thanks 
Sapienza University for hospitality. This work was supported by the "National Group for 
the Algebraic and Geometric Structures
and their Applications" (GNSAGA-INDAM).
We thank the referee for useful comments and questions that
certainly improved this article.

\section{Reformulating the problem using Mishchenko bundles}\label{twist-section}

Below, it will become necessary to use an equivalent description of the 
Hilbert complexes $(\dom_{\min}(M_\Gamma), d_\Gamma), (\dom_{\max}(M_\Gamma), d_\Gamma)$,
and the Hilbert space $L^2\Omega^*(M_{\Gamma},g_{\Gamma})$, working only on the base $\overline{M}$
and using the von Neumann algebra framework. We shall be brief here and refer to 
\cite[\S 1 and \S 2]{Lueck-book} and \cite{Schick} for details. 
Consider the complex group ring $\C\Gamma$ of complex-valued maps 
$f: \Gamma \to \C$ with compact support and the Hilbert space of square-summable sequences
$$
\ell^2\Gamma := \Bigl\{f:\Gamma \to \C \ \bigl| \ \sum_{\gamma \in \Gamma} \bigl|f(\gamma)\bigr|^2 < \infty\Bigr\}, 
$$
which can alternatively be viewed as Hilbert space completion of $\C\Gamma$. Recall that the von Neumann
algebra $\mathscr{N}\Gamma$ is defined as the space of bounded $\Gamma$-equivariant linear operators on the Hilbert space $\ell^2\Gamma$.
We recall the notion of $\mathscr{N}\Gamma$-Hilbert modules here,  
see \cite[\S 1 and \S 2]{Lueck-book} and \cite[\S 4]{Gromov-Shubin} for more details

\begin{defn}\label{gamma-notions1} 
A $\mathscr{N}\Gamma$-Hilbert module $H$ is a Hilbert space 
together with a linear isometric $\Gamma$-action such that there exists a Hilbert space $\mathcal{H}$ and an isometric linear 
$\Gamma$-embedding of $H$ into the tensor product of Hilbert spaces $\mathcal{H}\otimes \ell^2 \Gamma$ equipped with the 
obvious $\Gamma$-action. 
\end{defn}

We now define the so-called Mishchenko bundles
\begin{equation}\label{mi-bundles}
\begin{split}
\mathscr{E} &:= \overline{M}_\Gamma\times_\Gamma \ell^2\Gamma \;\longrightarrow \overline{M},\\
\mathscr{E}_c &:= \overline{M}_\Gamma\times_\Gamma \C\Gamma \;\longrightarrow \overline{M}, 
\end{split}
\end{equation}
Fibres of both bundles are modules, the first one for the natural $\mathscr{N}\Gamma$-action
and the second one for the natural $\C\Gamma$ action. In fact, 
the bundle $\mathscr{E}$ is a bundle of $\mathscr{N}\Gamma$-Hilbert modules in the sense of 
\cite[Definition 2.10]{Schick} and the same is true after twisting $\mathscr{E}$  
with 
the vector bundle $\Lambda^* {}^{w}(T^*M)$.
The construction defines natural isomorphisms of vector spaces
\begin{equation}\label{phi-isomorphism}
\begin{split}
\Phi_c: \, &C^\infty_c (M_\Gamma, \Lambda^* ({}^{w}T^*M_\Gamma)) \to C^\infty_c (M, \Lambda^* ({}^{w}T^*M)\otimes \mathscr{E}_c), \\
\Phi' :\, & C^\infty_{(2)} (M_\Gamma, \Lambda^* ({}^{w}T^*M_\Gamma)) \to C^\infty(M, \Lambda^* ({}^{w}T^*M)\otimes \mathscr{E}),
\end{split}
\end{equation}
with 
$C^\infty_{(2)} (M_\Gamma, \Lambda^* ({}^{w}T^*M_\Gamma))$ defined as
$$
\Bigl\{s\in C^\infty (M_\Gamma, \Lambda^* ({}^{w}T^*M_\Gamma)) :
\forall_{x \in M_\Gamma}\sum_\gamma |s(\gamma \cdot x)|^2 < \infty \Bigr\},
$$ 
and $\Phi$, $\Phi'$ written down explicitly in \cite[\S 7.5, (1)]{Schick}.  
The lower index $c$ indicates compact support. 
We shall simplify notation
\begin{align*}
&\Omega^*_{c}(M,\mathscr{E}_c) := C^\infty_c (M, \Lambda^* ({}^{w}T^*M)\otimes \mathscr{E}_c), \quad
&&\Omega^* (M,\mathscr{E}) := C^\infty (M, \Lambda^* ({}^{w}T^*M)\otimes \mathscr{E}), \\
&\Omega^*_{c}(M_\Gamma) := C^\infty_c (M_\Gamma, \Lambda^* ({}^{w}T^*M_\Gamma)).
\end{align*}
We can define a metric $g_{\mathscr{E}}$ on $\Lambda^* ({}^{w}T^*M)\otimes \mathscr{E}$,
using the metric $g$ together with the $\ell^2\Gamma$ inner product along the fibers. Noting in the 
first equality below that $\ell^2\Gamma$ is the Hilbert space completion of $\C\Gamma$, we have
\begin{equation}
\begin{split}
L^2 (M, \Lambda^* ({}^{w}T^*M)\otimes \mathscr{E}, g_{\mathscr{E}}) &= 
\overline{\Omega^*_c  (M, \mathscr{E}_c)}^{\ g_{\mathscr{E}}} \\
L^2 (M_\Gamma, \Lambda^* ({}^{w}T^*M), g_\Gamma) &=
\overline{\Omega^*_c (M_\Gamma)}^{\ g_{\Gamma}}  
\end{split}
\end{equation}
where the closures are in terms of the corresponding $L^2$ inner products
defined by the metrics in the upper indices.
We shall also use the following simplified notation
\begin{equation*}
L^2 \Omega^* (M, \mathscr{E}) := L^2 (M, \Lambda^* ({}^{w}T^*M)\otimes \mathscr{E}, g_{\mathscr{E}}),
\end{equation*}
together with  the one that we have already introduced $$L^2 \Omega^* (M_\Gamma) := L^2(M_\Gamma, \Lambda^* ({}^{w}T^*M_\Gamma),g_\Gamma).$$
In fact, \cite[Lemma 7.10]{Schick} proves that these two spaces are
$\mathscr{N}\Gamma$-Hilbert modules in the sense of \cite[Definition 2.1]{Schick},
and as explained in \cite[\S 7.5, (2)]{Schick}, the isomorphism $\Phi_c$ extends to an isometry  
of $\mathscr{N}\Gamma$-Hilbert modules
\begin{equation}\label{phi-isometry}
\Phi:   L^2 \Omega^*(M_\Gamma)\to L^2 \Omega^* (M, \mathscr{E})
\end{equation}

We now study the Hilbert complexes $(\dom_{\min}(M_\Gamma), d_\Gamma)$ and $(\dom_{\max}(M_\Gamma), d_\Gamma)$
under the isometric identification $\Phi$. There is a canonical connection 
on $\mathscr{E}$ and its subbundle $\mathscr{E}_c$, 
over the interior $M$, induced by the exterior differential on $M_\Gamma$. 
We refer to \cite[\S 3]{Schick} for more details on connections 
in the von Neumann setting. Hence, there is a natural (twisted) de Rham differential $d_{\mathscr{E}}$ on the twisted differential forms 
$\Omega^*_{c}(M,\mathscr{E}_c)$. In fact, $d_{\mathscr{E}}$ and the de Rham differential $d_\Gamma$ on $M_\Gamma$ act on larger 
domains of smooth forms
\begin{equation}\label{smooth-max}
\begin{split}
\Omega^{*}_{\max}(M,\mathscr{E}) &:= \Bigl\{ \omega \in \Omega^{*}(M,\mathscr{E})\cap L^2 \Omega^* (M, \mathscr{E}) 
\mid d_{\mathscr{E}} \omega \in \Omega^{*}(M,\mathscr{E})\cap L^2 \Omega^* (M, \mathscr{E})  \Bigr\}, \\
\Omega^{*}_{\max} (M_\Gamma) &:= \Bigl\{ \omega \in \Omega^* (M_\Gamma)\cap L^2 \Omega^* (M_\Gamma) 
\mid d_\Gamma \omega \in \Omega^* (M_\Gamma)\cap L^2 \Omega^* (M_\Gamma) \Bigr\}.
\end{split}
\end{equation}
Recalling the classical definitions of $(\dom_{\min}(M_\Gamma), d_\Gamma)$ and $(\dom_{\max}(M_\Gamma), d_\Gamma)$, 
we can define the corresponding minimal and maximal Hilbert complexes in $L^2\Omega^*(M, \mathscr{E})$ in the same way. Define
the graph norms 
\begin{equation}
\begin{split}
\| \omega \|_{\mathscr{E}} &:= \| \omega \|_{L^2\Omega^*(M, \mathscr{E})} + \| d_{\mathscr{E}} \omega \|_{L^2\Omega^*(M, \mathscr{E})}, 
\quad \omega \in \Omega^*_{\max}(M,\mathscr{E}), \\
\| \omega \|_\Gamma &:= \| \omega \|_{L^2\Omega^*(M_\Gamma)} + \| d_\Gamma \omega \|_{L^2\Omega^*(M_\Gamma)}, 
\quad \omega \in \Omega^*_{\max}(M_\Gamma).
\end{split}
\end{equation}

\begin{defn}\label{minmax-def} 
Consider the graph norms $\| \cdot \|_{\mathscr{E}}$ and $\| \cdot \|_\Gamma$. 
Then the minimal and maximal Hilbert complexes in $L^2\Omega^*(M, \mathscr{E})$ and 
$L^2\Omega^*(M_\Gamma)$ are defined as follows.
\begin{enumerate}
\item $(\dom_{\min}(M, \mathscr{E}), d_{\mathscr{E}})$ and $(\dom_{\max}(M, \mathscr{E}), d_{\mathscr{E}})$ are defined by
\begin{equation}
\dom_{\min}(M, \mathscr{E}) := \overline{\Omega^*_c(M,\mathscr{E}_c)}^{\,\| \cdot \|_{\mathscr{E}}}, 
\dom_{\max}(M, \mathscr{E}) := \overline{\Omega^*_{ \max}(M,\mathscr{E})}^{\, \| \cdot \|_{\mathscr{E}}}
\end{equation}
\item $(\dom_{\min}(M_\Gamma), d_\Gamma)$ and $(\dom_{\max}(M_\Gamma), d_\Gamma)$ are defined by
\begin{equation}\label{domains-smooth}
\dom_{\min}(M_\Gamma) := \overline{\Omega_c^*(M_\Gamma)}^{\, \| \cdot \|_{\Gamma}}, 
\dom_{\max}(M_\Gamma) := \overline{\Omega^*_{\max}(M_\Gamma)}^{\, \| \cdot \|_{\Gamma}}.
\end{equation}
\end{enumerate}
\end{defn}

\noindent
The definition of the maximal domain in \eqref{domains-smooth} is equivalent to 
the classical distributional definition, see \cite[(2.31)]{Hilbert}. Notice that while the von Neumann theory is
carefully developed in e.g. \cite{Lueck-book} and \cite{Schick}, the definition of the minimal domain
in \eqref{domains-smooth} is only implicit in the literature.

Recall now the notion of $\mathscr{N}\Gamma$-Hilbert complexes and isomorphisms between them:

\begin{defn}\label{gamma-notions2} Let $\Gamma$ be a finitely generated discrete group. 
\begin{enumerate}
\item A Hilbert complex $(\dom(d_*),d_*), \dom(d_*) \subset H_*,$ is an $\mathscr{N}\Gamma$-Hilbert complex,
if each Hilbert space $H_*$ is a $\mathscr{N}\Gamma$-Hilbert module and each differential 
$d_*$ is a closed and densely defined operator commuting 
with the action of $\Gamma$ and satisfying $d_{*+1}\circ d_*\equiv 0$ on the domain of $d_*$. 
\item A morphism between $\mathscr{N}\Gamma$-Hilbert complexes $H_\bullet:=(\dom(d),d)$ with $\dom(d_*) \subset H_*$ and 
$K_\bullet:=(\dom(d'),d')$ with $\dom(d'_*) \subset K_*,$ is defined as a sequence of bounded 
linear operators $f_k:H_k\rightarrow K_k$ for each $k$, commuting with the $\Gamma$ action and 
satisfying $f_{k+1}\circ d_k=d_k\circ f_k$ on $dom(d_k)$. We write 
$f:H_{\bullet}\rightarrow K_{\bullet}$.
\item We say that these $\mathscr{N}\Gamma$-Hilbert complexes are isomorphic, if the maps 
$f_k:H_k\rightarrow K_k$ are isometries and hence $f_k\dom(d_k) = \dom(d'_*)$ for each $k$.
\end{enumerate}
\end{defn}

The minimal and maximal Hilbert complexes above are $\mathscr{N}\Gamma$-Hilbert complexes in the 
sense of Definition \ref{gamma-notions2}. We arrive at a central observation.

\begin{prop}\label{minmax-thm} The isometry $\Phi:  L^2\Omega^*(M_\Gamma)\to L^2\Omega^*(M, \mathscr{E}) $ of 
$\mathscr{N}\Gamma$-Hilbert modules in \eqref{phi-isometry}, defines isomorphisms of 
the $\mathscr{N}\Gamma$-Hilbert complexes
\begin{equation}
\begin{split}
\Phi:  (\dom_{\min}(M_\Gamma), d_\Gamma) \to (\dom_{\min}(M, \mathscr{E}), d_{\mathscr{E}}) \\
\Phi:  (\dom_{\max}(M_\Gamma), d_\Gamma)\to (\dom_{\max}(M, \mathscr{E}), d_{\mathscr{E}}).
\end{split}
\end{equation}
\end{prop}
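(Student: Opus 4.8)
The strategy is to check that the isometry $\Phi$ of $\mathscr{N}\Gamma$-Hilbert modules in \eqref{phi-isometry} restricts to the relevant domains and intertwines the differentials, so that it becomes an isomorphism of Hilbert complexes in the sense of Definition \ref{gamma-notions2}(3). The key point to establish first is the algebraic intertwining on smooth compactly supported forms: namely, that $\Phi_c \circ d_\Gamma = d_{\mathscr{E}} \circ \Phi_c$ on $\Omega^*_c(M_\Gamma)$. This is essentially the statement that the canonical flat connection on $\mathscr{E}_c$ induced by the exterior differential on $M_\Gamma$ (see \cite[\S 3]{Schick}) produces, under the identification $\Phi_c$ of \eqref{phi-isomorphism}, exactly the twisted de Rham differential $d_{\mathscr{E}}$; this is built into the construction and is recorded in \cite[\S 7.5]{Schick}. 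Since $\Phi$ is an isometry extending $\Phi_c$ and $d_\Gamma, d_{\mathscr{E}}$ are closed operators, the intertwining relation extends by continuity from $\Omega^*_c$ to the closures of the graphs.

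For the minimal complexes this is essentially immediate: by Definition \ref{minmax-def}, $\dom_{\min}(M_\Gamma)$ is the closure of $\Omega^*_c(M_\Gamma)$ in the graph norm $\|\cdot\|_\Gamma$, and $\dom_{\min}(M,\mathscr{E})$ is the closure of $\Omega^*_c(M,\mathscr{E}_c)$ in $\|\cdot\|_{\mathscr{E}}$. The algebraic intertwining shows that $\Phi_c$ is an isometry from $(\Omega^*_c(M_\Gamma), \|\cdot\|_\Gamma)$ onto $(\Omega^*_c(M,\mathscr{E}_c), \|\cdot\|_{\mathscr{E}})$, because $\Phi$ preserves $L^2$-norms and $\|d_\Gamma \omega\|_{L^2\Omega^*(M_\Gamma)} = \|\Phi d_\Gamma \omega\|_{L^2\Omega^*(M,\mathscr{E})} = \|d_{\mathscr{E}} \Phi_c \omega\|_{L^2\Omega^*(M,\mathscr{E})}$. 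Taking completions, $\Phi$ maps $\dom_{\min}(M_\Gamma)$ isometrically onto $\dom_{\min}(M,\mathscr{E})$ and intertwines the differentials, so it is an isomorphism of $\mathscr{N}\Gamma$-Hilbert complexes (the $\Gamma$-equivariance of $\Phi$ is part of \eqref{phi-isometry}).

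For the maximal complexes the same formula $\|\omega\|_\Gamma = \|\Phi\omega\|_{\mathscr{E}}$ on smooth forms shows that $\Phi$ restricts to a norm-preserving bijection between the smooth maximal domains $\Omega^*_{\max}(M_\Gamma)$ and $\Omega^*_{\max}(M,\mathscr{E})$ defined in \eqref{smooth-max}, provided one first checks that $\Phi$ (equivalently $\Phi'$ of \eqref{phi-isomorphism}) carries $\Omega^*(M_\Gamma) \cap L^2\Omega^*(M_\Gamma)$ onto $\Omega^*(M,\mathscr{E}) \cap L^2\Omega^*(M,\mathscr{E})$ and commutes with $d_\Gamma$, $d_{\mathscr{E}}$ on this larger space of smooth $L^2$ forms. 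This reduces, via the pointwise identity $\sum_\gamma |s(\gamma \cdot x)|^2 < \infty$ appearing in the definition of $C^\infty_{(2)}$, to an essentially local and pointwise statement about the fiberwise $\ell^2\Gamma$-identification, which is exactly what \cite[\S 7.5]{Schick} establishes; the differential commutes because it is defined fiberwise by the same exterior differential on $M_\Gamma$. Passing to closures in the graph norms then yields $\Phi(\dom_{\max}(M_\Gamma)) = \dom_{\max}(M,\mathscr{E})$ with $\Phi d_\Gamma = d_{\mathscr{E}} \Phi$, completing the proof.

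\textbf{Main obstacle.} The only genuinely nontrivial point is the passage through smooth non-compactly-supported $L^2$-forms: one must verify that the isometry $\Phi$ (built as the completion of $\Phi_c$) actually sends the \emph{smooth} maximal-domain forms $\Omega^*_{\max}(M_\Gamma)$ to $\Omega^*_{\max}(M,\mathscr{E})$, rather than merely matching up the $L^2$-closures abstractly. In other words, the content is identifying the two a priori different descriptions of the maximal domain — one via smooth forms with $L^2$ differential, one via the distributional extension — and checking that $\Phi'$ of \eqref{phi-isomorphism} respects this. As noted after \eqref{domains-smooth}, the equivalence of the smooth and distributional definitions of the maximal domain is \cite[(2.31)]{Hilbert}, and the compatibility of $\Phi'$ with smoothness and with $d$ is \cite[\S 7.5]{Schick}; once these are invoked, the rest is the routine density-and-continuity argument sketched above.
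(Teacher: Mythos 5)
Your proposal is correct and follows essentially the same route as the paper: establish the intertwining $d_{\mathscr{E}} = \Phi \circ d_\Gamma \circ \Phi^{-1}$ on the smooth cores ($\Omega^*_c$ for the minimal complex, $\Omega^*_{\max}$ for the maximal one, the latter being handled in the paper by a partition of unity argument, which is the same local reduction you sketch), then use the graph-norm isometry and pass to closures. No gaps.
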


\begin{proof}
One can check easily by construction that on $\Omega^*_c(M,\mathscr{E}_c)$ 
(in fact also on $\Omega^*_{\max}(M,\mathscr{E})$ by a partition of unity argument)
$$
d_{\mathscr{E}} = \Phi \circ d_{\Gamma} \circ \Phi^{-1}.
$$
Thus, using the isomorphisms \eqref{phi-isomorphism} and the fact that
the map $\Phi: L^2\Omega^*(M, \mathscr{E}) \to L^2\Omega^*(M_\Gamma)$ in \eqref{phi-isometry} is an isometry, we obtain
\begin{equation}
\begin{split}
\dom_{\min}(M, \mathscr{E}) = \Phi (\dom_{\min}(M_\Gamma)), \quad 
d_{\mathscr{E}} = \Phi \circ d_{\Gamma} \circ \Phi^{-1} \ \textup{on} \ \dom_{\min}(M, \mathscr{E}), \\
\dom_{\max}(M, \mathscr{E}) = \Phi (\dom_{\max}(M_\Gamma)),  \quad 
d_{\mathscr{E}} = \Phi\circ d_{\Gamma} \circ \Phi^{-1} \ \textup{on} \ \dom_{\max}(M, \mathscr{E}).
\end{split}
\end{equation}
The statement now follows. \end{proof}

\section{Existence of $L^2$-Betti numbers and Novikov-Shubin invariants}\label{finiteness-section}
\subsection{Some analytic properties}
Let $D_{\textup{rel}}$ and $D_{\textup{abs}}$ denote the self-adjoint operators 
on $L^2\Omega^*(M,g)$ defined as the rolled-up operators of the Hilbert complexes  
$(\dom_{\min}(M), d)$ and $(\dom_{\max}(M), d)$, respectively.
Let us also introduce the operators $\Delta_{\mathrm{abs}}:=D^2_{\mathrm{abs}}$ and 
$\Delta_{\mathrm{rel}}:=D^2_{\mathrm{rel}}$. Similarly, let $D_{\Gamma, \textup{rel}}$ and 
$D_{\Gamma, \textup{abs}}$ denote the self-adjoint operators on $L^2\Omega^*(M_\Gamma,g_{\Gamma})$  
defined as the rolled-up operators of the Hilbert complexes  $(\dom_{\min}(M_\Gamma), d_\Gamma)$ and 
$(\dom_{\max}(M_\Gamma), d_\Gamma)$, respectively. We write
\begin{align*}
&D_{\Gamma,\textup{rel}}:= d_{\Gamma, \, \min} + d^*_{\Gamma, \, \min}, \quad 
\Delta_{\Gamma,\mathrm{rel}}:=D^2_{\Gamma,\mathrm{rel}}, \\
&D_{\Gamma,\textup{abs}}:= d_{\Gamma, \, \max} + d^*_{\Gamma, \, \max}, \quad 
\Delta_{\Gamma,\mathrm{abs}}:=D^2_{\Gamma,\mathrm{abs}}.
\end{align*}
We define the following sets of functions 
\begin{align*}
&\mathcal{A}(\overline{M}):=\{f\in C(\overline{M}) :  f|_{M}\in C^{\infty}(M),\ df\in L^{\infty}\Omega^1(M,g)\}, \\
&\mathcal{A}(\overline{M}_{\Gamma}):=\{f\in C(\overline{M}_{\Gamma}) : 
f|_{M_{\Gamma}}\in C^{\infty}(M_{\Gamma}),\ df\in L^{\infty}_{\mathrm{loc}}\Omega^1(M_{\Gamma},g_{\Gamma})\}.
\end{align*}
Note that given an open cover $\{U_\alpha\}_{\alpha \in I}$ of $\overline{M}_{\Gamma}$, 
there exists a partition of unity $\{\phi_{\alpha'}\}_{\alpha'\in J}$, with compact support subordinate to 
$\{U_\alpha\}_{\alpha \in I}$ with $\phi_{\alpha'}\in \mathcal{A}(\overline{M}_{\Gamma})$ for each 
$\alpha'\in J$, see \cite[Proposition 3.2.2]{You}. Obviously the analogous result holds true on 
$\overline{M}$ with respect to $\mathcal{A}(\overline{M})$.

\begin{lemma}
\label{lemma-1}
Let $U\subset \overline{M}_{\Gamma}$ be an open subset such that $p|_U:U\rightarrow V$ 
is an isomorphism, with $V=p(U)$ and $p:\overline{M}_{\Gamma}\rightarrow \overline{M}$  
the covering map. Let $n \in \mathbb{N}$ and consider any collection 
$\{\phi_1, \ldots, \phi_n\} \subset \mathcal{A}(\overline{M}_{\Gamma})$ with $\supp(\phi_k)\subset U$. Then 
\begin{equation}
\label{composition}
\prod_{k=1}^n\left(\phi_k(i+D_{\Gamma,\mathrm{abs}/\mathrm{rel}})^{-1}\right)
\end{equation}
 is a Hilbert-Schmidt operator for $n\gg 0$ sufficiently large.
\end{lemma}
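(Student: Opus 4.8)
The plan is to reduce the statement to a local elliptic-regularity estimate on the base $(M,g)$ and then amplify it using the compact support of the cutoffs. First I would use the isomorphism $p|_U : U \to V$ to transport everything downstairs: since $\supp(\phi_k) \subset U$ and $p|_U$ is a diffeomorphism onto $V$, each $\phi_k$ corresponds to a function $\bar\phi_k = \phi_k \circ (p|_U)^{-1} \in \mathcal{A}(\overline{M})$ with support in $V$, and the operator $D_{\Gamma,\mathrm{abs}/\mathrm{rel}}$ restricted to forms supported near $U$ agrees with the corresponding operator $D_{\mathrm{abs}/\mathrm{rel}}$ on $(M,g)$ after this identification. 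This is where \emph{Assumption \ref{fundamental}} enters: it asserts precisely that $(i + D_{\mathrm{abs}/\mathrm{rel}})^{-1}$ on $(M,g)$ has enough regularity so that a product of the form $\prod_{k=1}^n \bigl(\bar\phi_k (i+D_{\mathrm{abs}/\mathrm{rel}})^{-1}\bigr)$ is Hilbert--Schmidt for $n$ large. So the real content is to argue that the upstairs product \eqref{composition}, which a priori lives on the covering $M_\Gamma$, is unitarily equivalent (or at least equal in Hilbert--Schmidt norm) to the downstairs product.

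\textbf{Localization.} To make that reduction rigorous I would insert an auxiliary cutoff: pick $\psi \in \mathcal{A}(\overline{M}_\Gamma)$ with $\supp\psi \subset U$ and $\psi \equiv 1$ on a neighborhood of $\bigcup_k \supp(\phi_k)$, which exists by the partition-of-unity statement quoted from \cite[Proposition 3.2.2]{You}. Then, using the first-order commutator $[\psi, D_{\Gamma,\mathrm{abs}/\mathrm{rel}}] = c(d\psi)$ (Clifford multiplication by the bounded form $d\psi$), one rewrites
\[
\phi_k (i + D_{\Gamma,\mathrm{abs}/\mathrm{rel}})^{-1} = \phi_k (i + D_{\Gamma,\mathrm{abs}/\mathrm{rel}})^{-1} \psi + \phi_k (i + D_{\Gamma,\mathrm{abs}/\mathrm{rel}})^{-1} c(d\psi)(i + D_{\Gamma,\mathrm{abs}/\mathrm{rel}})^{-1} + \cdots,
\]
expanding telescopically so that after finitely many commutator steps every summand in $\prod_k (\phi_k (i+D_{\Gamma,\mathrm{abs}/\mathrm{rel}})^{-1})$ contains a factor $(i + D_{\Gamma,\mathrm{abs}/\mathrm{rel}})^{-1}$ sandwiched between functions supported in $U$. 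On such a sandwiched factor the resolvent of $D_{\Gamma,\mathrm{abs}/\mathrm{rel}}$ may be replaced by the resolvent of the operator obtained by transporting $D_{\mathrm{abs}/\mathrm{rel}}$ from $V$ to $U$, because by unique continuation / locality of the Dirac-type operator and finite propagation, the difference is smoothing off the diagonal and the relevant pieces agree on $U$; alternatively one works directly with the parametrix and notes that the Schwartz kernel of $\phi (i+D_{\Gamma})^{-1}\psi$ is supported in $U \times U$ up to a smoothing (hence Hilbert--Schmidt) remainder, and on $U \times U$ it equals the pullback of the corresponding kernel on $V \times V$.

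\textbf{Conclusion and main obstacle.} Once every summand is expressed through the downstairs resolvent with downstairs cutoffs, Assumption \ref{fundamental} gives directly that a product of $n \gg 0$ such factors is Hilbert--Schmidt on $L^2\Omega^*(M,g)$; pulling back via the isometry induced by $p|_U$ (extended by zero) shows the same for \eqref{composition} on $L^2\Omega^*(M_\Gamma,g_\Gamma)$. Finitely many commutator terms are generated, each of which is handled the same way (possibly needing a slightly larger $n$), and their sum is still Hilbert--Schmidt. The main obstacle I anticipate is the bookkeeping in the localization step: one must ensure that the commutator expansion genuinely terminates with every term carrying a cutoff-sandwiched resolvent, and that the "replace $D_\Gamma$ by $D$ on $U$" move is justified in the singular setting, where $D_{\mathrm{abs}/\mathrm{rel}}$ has nontrivial closed extensions near the strata — so one has to check that the chosen self-adjoint extension upstairs restricts compatibly to the chosen extension downstairs on the isomorphic neighborhood $U \cong V$. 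This compatibility is essentially automatic because minimal/maximal domains are defined by local conditions near the singular strata and $p|_U$ is stratum-preserving, but it should be spelled out.
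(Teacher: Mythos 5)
There is a genuine gap, and it sits exactly where you place the ``real content'': you never supply a correct reason why the downstairs product $\prod_k \bigl(\bar\phi_k (i+D_{\mathrm{abs}/\mathrm{rel}})^{-1}\bigr)$ is Hilbert--Schmidt. You attribute this to Assumption \ref{fundamental}, but that assumption is not what you describe: it is an \emph{off-diagonal} statement, asserting that $\phi(i+D_{\mathrm{abs}/\mathrm{rel}})^{-1}\chi$ is Hilbert--Schmidt when $\supp(\phi)\cap\supp(\chi)=\varnothing$. In Lemma \ref{lemma-1} the cutoffs $\phi_k$ are all supported in the \emph{same} set $U$ and may overlap or coincide (and in the subsequent application they do), so the disjoint-support hypothesis is never available and Assumption \ref{fundamental} cannot be applied to any factor of the product. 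Note also that the statement of Lemma \ref{lemma-1} does not assume Assumption \ref{fundamental} at all; the lemma is proved before and independently of it. The ingredient you are missing is a genuine compactness/Schatten-class input on the compact base: the paper invokes \cite[Theorem 1.1]{Alvarez} to conclude that $(i+D_{\mathrm{abs}/\mathrm{rel}})^{-1}$ on $(M,g)$ lies in the $n$-Schatten class for $n$ large. Combined with the ideal property of Schatten classes, each factor $\phi_k(i+D_{\Gamma,\mathrm{abs}/\mathrm{rel}})^{-1}$ is then $n$-Schatten, and a product of $n$ such factors is trace class, hence Hilbert--Schmidt. Without this (or an equivalent) input, your argument does not close.

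Your localization step is also considerably more elaborate than necessary, and its justification (``unique continuation / locality and finite propagation, the difference is smoothing off the diagonal'') is not rigorous as stated — resolvents do not have finite propagation, and in the singular setting the compatibility of closed extensions is exactly the point that needs an argument, not a parenthetical. The paper avoids all of this with a single exact identity: writing $\Psi$ for the isometry induced by $(p|_U)^{-1}$ and checking that $\Psi(\phi\omega)\in\mathscr{D}(D_{\mathrm{abs}/\mathrm{rel}})$ with $D_{\mathrm{abs}/\mathrm{rel}}\Psi(\phi\omega)=\Psi(D_{\Gamma,\mathrm{abs}/\mathrm{rel}}(\phi\omega))$ for $\omega\in\mathscr{D}(D_{\Gamma,\mathrm{abs}/\mathrm{rel}})$, one has
\begin{equation*}
\phi(i+D_{\Gamma,\mathrm{abs}/\mathrm{rel}})^{-1}
=\Psi^{-1}(i+D_{\mathrm{abs}/\mathrm{rel}})^{-1}\circ
\Psi(i+D_{\Gamma,\mathrm{abs}/\mathrm{rel}})\,\phi\,(i+D_{\Gamma,\mathrm{abs}/\mathrm{rel}})^{-1},
\end{equation*}
which exhibits each factor as (an isometric conjugate of) the base resolvent composed with a bounded operator — no commutator expansion, no parametrix, no replacement of kernels up to smoothing remainders. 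I would recommend restructuring your proof around such an intertwining identity and, crucially, citing the Schatten-class result for the resolvent on the compact base as the analytic engine, rather than Assumption \ref{fundamental}.
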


\begin{proof} 
Let us denote with $\Psi:L^2\Omega^*(\mathrm{reg}(U),g_{\Gamma}|_{\mathrm{reg}(U)})\rightarrow 
L^2\Omega^*(\mathrm{reg}(V),g|_{\mathrm{reg}(V)})$ the isometry induced by $(p|_U)^{-1}:V\rightarrow U$. 
Here, we write $\mathrm{reg}(U)$ and $\mathrm{reg}(V)$ for the regular part of $U$ and $V$, respectively.
Let $\phi\in \mathcal{A}(\overline{M}_{\Gamma})$ with $\supp(\phi)\subset U$ and 
$\omega\in \mathscr{D}(D_{\Gamma,\mathrm{abs}/\mathrm{rel}})$ be arbitrarily fixed. It is not difficult to show that 
\begin{align*}
&\phi \omega\in \mathscr{D}(D_{\Gamma,\mathrm{abs}/\mathrm{rel}}), \quad \Psi(\phi\omega) \in \mathscr{D}(D_{\mathrm{abs}/\mathrm{rel}}), 
\\ &D_{\mathrm{abs}/\mathrm{rel}}(\Psi(\phi\omega))=\Psi(D_{\Gamma,\mathrm{abs}/\mathrm{rel}}(\phi \omega)).
\end{align*} 
Hence we get
$$
\begin{aligned}
\phi(i+D_{\Gamma,\mathrm{abs}/\mathrm{rel}})^{-1}&=
\Psi^{-1}(i+D_{\mathrm{abs}/\mathrm{rel}})^{-1} \circ (i+D_{\mathrm{abs}/\mathrm{rel}})\Psi \circ \phi(i+D_{\Gamma,\mathrm{abs}/\mathrm{rel}})^{-1}\\
&=\Psi^{-1}(i+D_{\mathrm{abs}/\mathrm{rel}})^{-1} \circ \Psi(i+D_{\Gamma,\mathrm{abs}/\mathrm{rel}}) \circ \phi(i+D_{\Gamma,\mathrm{abs}/\mathrm{rel}})^{-1}.
\end{aligned}
$$

Note that $(i+D_{\Gamma,\mathrm{abs}/\mathrm{rel}})\phi(i+D_{\Gamma,\mathrm{abs}/\mathrm{rel}})^{-1}$ is a bounded operator and $(i+D_{\mathrm{abs}/\mathrm{rel}})^{-1}$ lies in the $n$-Schatten class for $n$ sufficiently big as a consequence of \cite[Theorem 1.1]{Alvarez}.  Therefore 
$\phi(i+D_{\Gamma,\mathrm{abs}/\mathrm{rel}})^{-1}$ is $n$-Schatten and so we can conclude that \eqref{composition} is 
Hilbert Schmidt for $n$ sufficiently big.
\end{proof}

\noindent Below, we shall impose an analytic assumption.
\begin{assump}\label{fundamental}
For any arbitrarily fixed cutoff functions $\phi$, $\chi\in \mathcal{A}(\overline{M})$ 
such that $\supp(\phi)\cap \supp(\chi)=\varnothing$, the following composition is Hilbert-Schmidt
\begin{equation}
\phi(i+D_{\mathrm{abs}/\mathrm{rel}})^{-1}\chi.
\end{equation}
\end{assump}

We expect the Assumption \ref{fundamental} to hold on any compact
smoothly stratified (Thom-Mather) pseudo-manifold with an iterated wedge metric.
Currently, we can show that the assumption holds in the following two cases. 

\begin{prop}\label{witt-examples}
The above Assumption \ref{fundamental} is satisfied in the following two cases:
\begin{enumerate}
\item $(M,g)$ has an isolated conical singularity.
\item $M$ is a Witt pseudo-manifold with (non-isolated) singularities 
of arbitrary depth and $g$ is an adapted iterated  wedge metric as in 
\cite[Proposition 5.4]{package} (notice that it is always possible to endow 
a Witt pseudo-manifold with such a metric, see again \cite[Proposition 5.4]{package}).
\end{enumerate}
\end{prop}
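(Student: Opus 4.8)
The plan is to verify Assumption 1.5 in the two listed cases by reducing the statement about the cutoff-sandwiched resolvent $\phi(i+D_{\mathrm{abs}/\mathrm{rel}})^{-1}\chi$ on $(M,g)$ to known mapping properties of the resolvent of the Hodge--Dirac operator on wedge (equivalently, complete edge) spaces, together with pseudolocality. First I would observe that, by the partition-of-unity remark following the definition of $\mathcal{A}(\overline M)$, it suffices to treat the case where $\phi$ and $\chi$ are each supported in a single distinguished neighbourhood (a cone or a cone bundle chart, or an interior coordinate ball), since a general pair of disjointly supported cutoffs can be written as finite sums of such localized pieces, and the Hilbert--Schmidt class is a two-sided ideal stable under finite sums. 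When $\phi,\chi$ are supported in the smooth interior away from the singular strata the claim is classical elliptic theory, so the content is in the near-singular region.

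For case (1), the isolated conical singularity, I would use the known structure of the resolvent $(i+D_{\mathrm{abs/rel}})^{-1}$ on a manifold with a single cone point, as developed in the Cheeger/Br\"uning--Seeley and Lesch framework: on a cone $C(L)=(0,1)\times L$ with metric $dr^2+r^2 g_L$, the operator $D=d+d^*$ with its ideal boundary conditions has a resolvent that is an integral operator whose Schwartz kernel is smooth off the diagonal and whose behaviour as $r\to 0$ is controlled by the spectral data of the link $L$. Disjointly supported cutoffs $\phi,\chi$ force the kernel to be evaluated away from the diagonal, where it is smooth and decays; combined with the Witt (or, here, arbitrary) hypothesis on the link in the isolated case one checks $\int\int |k(x,y)|^2\, \dv(x)\dv(y)<\infty$. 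Concretely I would cite the resolvent estimates already used implicitly via \cite{Alvarez} in Lemma 1.4, which give that $(i+D_{\mathrm{abs/rel}})^{-1}$ is in some Schatten class $n$; then $\phi(i+D)^{-1}\chi$ is in particular $n$-Schatten, and an off-diagonal/pseudolocality improvement upgrades this to Hilbert--Schmidt. The cleanest route is: $(i+D)^{-1}$ is $n$-Schatten globally, and $\phi(i+D)^{-1}\chi$ with disjoint supports has a smooth kernel on a neighbourhood of the diagonal removed, so one can trade smoothness off-diagonal against the integrability already known, iterating as in Lemma 1.4 if necessary.

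For case (2), the Witt pseudomanifold of arbitrary depth with an adapted iterated wedge metric, I would argue inductively on the depth, using the iterated cone-bundle structure: near a singular stratum $Y$ of depth $k$ the space looks like a bundle over $Y$ with fibre a cone $C(Z)$ over a lower-depth Witt space $Z$, and the metric is (quasi-isometric to) a wedge metric $dr^2+r^2 g_Z + \pi^* g_Y$. The Witt condition guarantees $L^2$-cohomology of the links in the critical middle degrees vanishes, which is exactly what makes $d_{\min}=d_{\max}$ and makes the Hodge--Dirac operator have a well-behaved (discrete-near-the-singularity, with the global resolvent in a Schatten class) functional calculus via \cite{Alvarez}; I would invoke \cite[Proposition 5.4]{package} for the existence of the adapted metric and the resulting resolvent bounds. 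The main obstacle, and where I expect to spend the most care, is the non-isolated depth-$>1$ case: here one cannot simply separate variables globally, and one must control the resolvent kernel's off-diagonal decay \emph{uniformly along the singular stratum $Y$} and \emph{through the iterated cone structure}. The technically cleanest way to package this is to work on the resolution $\widetilde M$ with the edge calculus: $(i+D_{\mathrm{abs/rel}})^{-1}$ lies in (a weighted version of) the edge pseudodifferential calculus, whose elements have kernels that are conormal on the edge double space and in particular smooth and rapidly vanishing off the diagonal; sandwiching by cutoffs with disjoint supports kills the diagonal singularity entirely and leaves a kernel that is polyhomogeneous with positive exponents at all boundary faces (here the positivity of the exponents, hence square-integrability, is precisely the Witt/adapted-metric input), so the operator is Hilbert--Schmidt. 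I would therefore structure the proof as: (i) reduce to localized disjoint cutoffs; (ii) cite the edge-calculus / Schatten-class membership of the global resolvent from \cite{Alvarez} and \cite[Prop. 5.4]{package}; (iii) use pseudolocality/off-diagonal smoothing to pass from Schatten-$n$ to Hilbert--Schmidt once the diagonal is excised, noting that the Witt hypothesis is what makes the relevant weights square-integrable; and (iv) handle the isolated conical case as the depth-one special case, where everything is explicit via separation of variables over the link.
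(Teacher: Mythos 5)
Your overall strategy is the same as the paper's: describe the resolvent kernel microlocally on a blown-up double space, note that disjoint supports of $\phi$ and $\chi$ excise the diagonal and the front face, and then check square-integrability of what remains at the side faces. But the one step that carries all the weight --- why the off-diagonal kernel is square-integrable \emph{near the singular stratum} --- is where your justification fails. You assert that the sandwiched kernel is ``polyhomogeneous with positive exponents at all boundary faces,'' claiming this positivity is the Witt input. That is not what holds and not what is needed: the expansion exponents of $\beta^*(i+D)^{-1}$ at the left and right faces are in general negative, and the correct threshold for square-integrability against the degenerate volume form $x^{\dim F}dx$ is $2\alpha' > -\dim F - 1$, which permits $\alpha'<0$. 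The paper obtains exactly this bound by a soft a priori argument: for $u\in C^\infty_c$, $(i+D)^{-1}u$ lies in $\dom(D)\subset L^2$, and combining this with polyhomogeneity of the lifted kernel forces the leading exponent above the critical threshold. Without that (or an explicit computation of indicial roots), your square-integrability claim is unsupported. Your fallback route --- ``$(i+D)^{-1}$ is Schatten-$n$ by \cite{Alvarez}, and pseudolocality upgrades the sandwiched operator to Hilbert--Schmidt'' --- is not an argument either: global Schatten-$n$ membership plus smoothness of the kernel away from the diagonal gives no integrability at the (incomplete) singular end, which is precisely the issue.

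Two further points of divergence. For the isolated conical case the paper does not use a direct resolvent parametrix \`a la Br\"uning--Seeley--Lesch; it uses Mooers' polyhomogeneous description of the heat kernel on the heat-space blowup and passes to the resolvent via $\phi(i+D)^{-1}\chi=\int_0^\infty e^{-t}\phi(i-D)e^{-tD^2}\chi\,dt$, checking uniform $L^2$ bounds in $t$ so the integral converges (the disjointness of supports is essential there, since on the diagonal the integrand behaves like $t^{-\dim M}$). Your proposed source of the microlocal input is therefore genuinely different for case (1), and would need its own verification of the same off-diagonal $L^2$ bound. For case (2), your induction on depth is not carried out (you do not explain how the resolvent on a depth-$k$ space is assembled from lower-depth data); the paper avoids this entirely by citing the Albin--Gell-Redman resolvent construction, which already treats arbitrary depth, and observing that the extra blowups required at higher depth all sit inside the front face, away from the support of $\beta^*\phi(i+D)^{-1}\chi$. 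If you repair the square-integrability step as above, your outline converges to the paper's proof of case (2); as written, the crucial estimate is missing.
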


\begin{proof} Let us abbreviate $D:= D_{\mathrm{abs}/\mathrm{rel}}$
Notice that in the Witt case the operator is essentially self-adjoint, \cite[Section 5.8]{package}, 
so $D_{\mathrm{abs}} =D_{\mathrm{rel}}$ in this case.
The proof uses the microlocal analysis of the heat kernel $e^{-tD^2}$ by Mooers \cite[Theorem 4.1]{Moo} for the first case,
and the microlocal description of the resolvent $(D-\lambda)^{-1}$ under the Witt assumption by Albin and Gell-Redman 
\cite[Theorem 4.3]{AlGe} for the second case. In both cases, the microlocal analysis employs 
the blowup techniques, see for example Melrose \cite{Mel:TAP} and Mazzeo \cite{Maz:ETO}. 
Since the notion of blowups appears only within the limits of this proof, we don't attempt to 
provide a detailed presentation and instead suggest to think of blowups in terms of polar coordinates around the 
corresponding blown up submanifolds.\medskip

\noindent \underline{\emph{Proof of the first case:}} Since the Assumption \ref{fundamental} is concerned with the 
resolvent, when using Mooers \cite[Theorem 4.1]{Moo} in the first case we need to pass from the heat operator to the resolvent.
We do this using the following relation
\begin{align*}
(i-D) \int_0^\infty e^{-t(\textup{Id} + D^2)} dt = 
(i-D) (\textup{Id} + D^2)^{-1} = (i + D)^{-1}.
\end{align*}
In particular, we have a relation between the resolvent and the heat operator
\begin{align}\label{heat-resolvent}
\phi(i + D)^{-1} \chi = \int_0^\infty e^{-t} \phi (i-D) e^{-t D^2}\chi dt.
\end{align}
In what follows, we are going to explain why this is an equality of polyhomogeneous (in particular smooth in the interior) conormal Schwartz kernels. 
The asymptotics of the Schwartz kernel of the operator on the right hand side in \eqref{heat-resolvent} is conveniently described by 
the heat-space blowup of $\widetilde{M} \times \widetilde{M} \times [0,\infty)_{\sqrt{t}}$, where by a small abuse of 
notation we denote the boundary of $\widetilde{M}$ by $\partial M$, see \cite{Mel:TAP}. In the isolated case, which we consider 
here, $\partial M$ is simply the link of the cone. One first blows up 
the highest codimension corner $\partial M \times \partial M \times \{0\}$, which introduces a new boundary face 
ff. Then one blows up the temporal diagonal $\textup{diag} (\widetilde{M} \times \widetilde{M}) \times \{0\}$, which 
introduces the boundary face td. The resulting blowup is referred to as the heat space blowup $\mathscr{M}^2_h$ 
and is illustrated in Figure \ref{heat-space}. There, $x$ and $\widetilde{x}$ are boundary defining functions of the 
two copies of $\widetilde{M}$. The boundary faces rf, lf and tf correspond to the boundary faces 
$\{x=0\}, \{\widetilde{x}=0\}$ and $\{t=0\}$ before the blowup, respectively. There is also a canonical blowdown map 
$$
\beta: \mathscr{M}^2_h \to \widetilde{M} \times \widetilde{M} \times [0,\infty).
$$

\begin{figure}[h]
\includegraphics[scale=0.6]{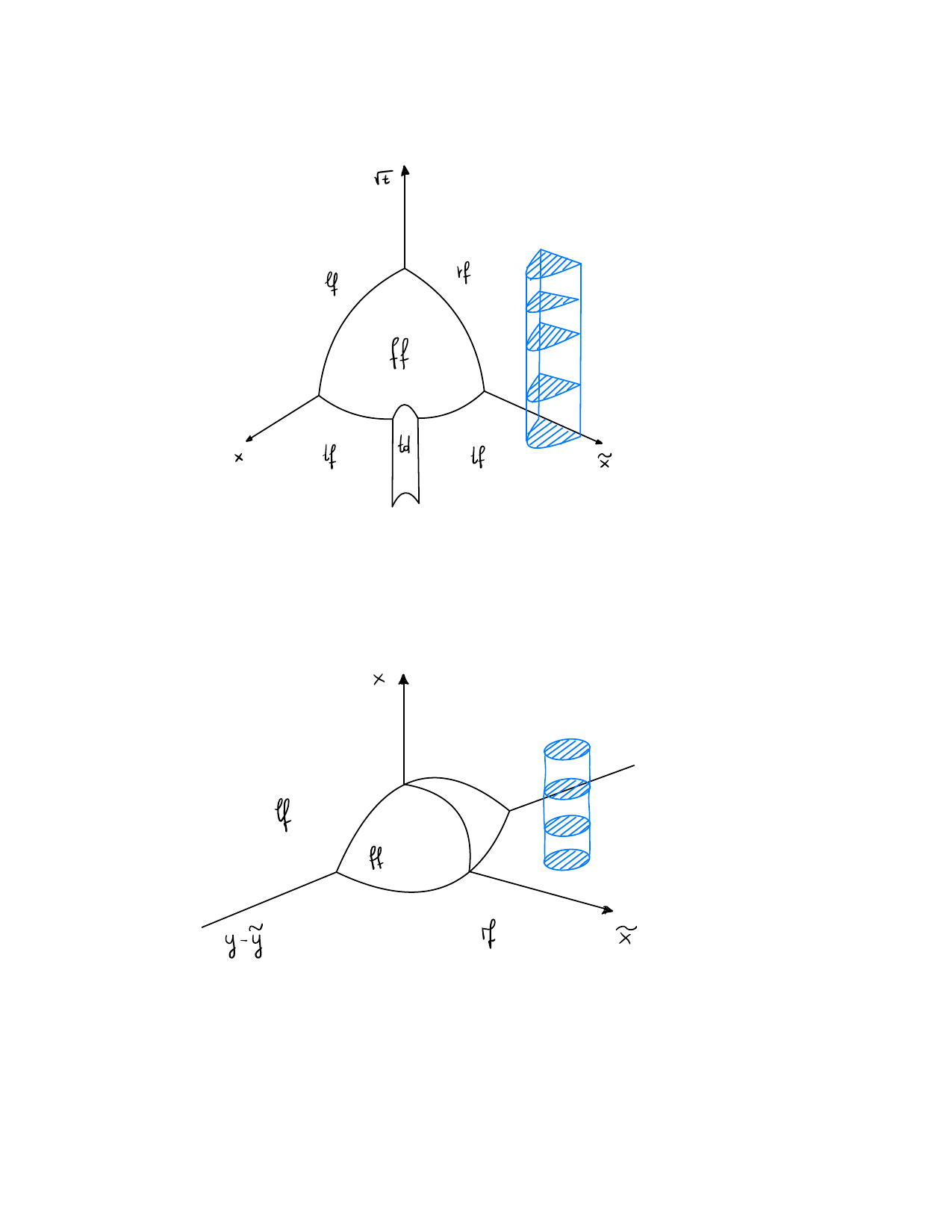}
\caption{The heat-space blowup $\mathscr{M}^2_h$ and support of $\phi e^{-t D^2} \chi$.}
\label{heat-space}
\end{figure}

Since $\supp(\phi)\cap \supp(\chi)=\varnothing$, the lift $\beta^*\phi e^{-t D^2} \chi$ is in fact supported away
from ff and td. Figure \ref{heat-space} illustrates the support of $\beta^*\phi e^{-t D^2} \chi$,
where we consider the generic case that $\supp(\phi) \cap \partial M \neq \varnothing$. In that case $\supp(\chi)$
must be fully contained in the interior of $M$ and hence the support of $\beta^*\phi e^{-t D^2} \chi$ in $\mathscr{M}^2_h$
is of the form as indicated by the blue shaded region. \smallskip

\noindent
It is a central result of Mooers \cite[Theorem 4.1]{Moo} that $\beta^*e^{-t D^2}$ is a conormal polyhomogeneous
section of $\beta^*E$ on $\mathscr{M}^2_h$, smooth in the open interior, where we have abbreviated
$$
E:=\Lambda^* ({}^{w}T^*M) \boxtimes \Lambda^* ({}^{w}T^*M).
$$ 
The kernel $\beta^* \phi e^{-t D^2} \chi$
vanishes identically near ff and td and hence the precise
asymptotics of $\beta^*e^{-t D^2}$ at ff and td is irrelevant here. The lift $\beta^*e^{-t D^2}$ is vanishing to infinite 
order at tf and is of order $(\alpha,p)$ in its asymptotics at rf, i.e. 
\begin{equation}\label{ap}
\beta^*e^{-t D^2} \sim \rho_{\textup{rf}}^\alpha \log^p(\rho_{\textup{rf}}), \ \rho_{\textup{rf}} \to 0.
\end{equation}
The precise value of $(\alpha,p)$ is governed by the spectrum of some operators on the link. 
Instead of making it explicit here, note that \eqref{ap} translates for any 
$u \in C^\infty_c(M, \Lambda^* ({}^{w}T^*M))$ and any fixed $t>0$ into an asymptotics
\begin{equation}\label{ap2}
(e^{-t D^2} u)(x) = x^\alpha \log^p(x) \, G(x), 
\end{equation}
where $G(x)$ is a bounded section of $\Lambda^* ({}^{w}T^*M)$ (with the fibrewise inner product defined by $g$)
as $x \to 0$. By definition, $e^{-t D^2} u \in \dom (D^2)$ and hence in particular 
$e^{-t D^2} u \in L^2(M, \Lambda^* ({}^{w}T^*M), g)$. Noting that
the volume form of $g$ is of the form $x^{\dim \partial M} dx$ times a volume form on $\partial M$, up to some bounded
function on $\widetilde{M}$,
$e^{-t D^2} u \in L^2$ and \eqref{ap2} implies
\begin{equation}\label{ap3}
2\alpha > - \dim \partial M -1.
\end{equation}
In the generic case, illustrated in Figure \ref{heat-space}, $\beta^* \phi e^{-t D^2} \chi$ 
is vanishing identically near ff and td, and hence it is a lift of a polyhomogeneous function on 
$[0,\infty) \times \widetilde{M} \times \widetilde{M}$. We conclude from \eqref{ap3}
\begin{align*}
\phi e^{-t D^2} \chi &\in L^2(M \times M, E; g), \\
\phi (i-D) e^{-t D^2}\chi &\in L^2(M \times M, E; g),
\end{align*}
uniformly as $t \to \infty$ and $t\to 0$. Note that for the second statement
we used exactly the same argument as above: polyhomogeneity of the lift $\beta^* (i-D) e^{-t D^2}$, and 
$(i-D) e^{-t D^2}u \in \dom(D) \subset L^2(M, \Lambda^* ({}^{w}T^*M),g)$
to get a bound of the form \eqref{ap3} for its asymptotics at rf.
Consequently the Schwartz kernel $e^{-t} \phi (i-D) e^{-t D^2}\chi $ is integrable in $t \in (0,\infty)$ and hence
$$
\int_0^\infty e^{-t} \phi (i-D) e^{-t D^2}\chi dt \in L^2(M \times M, E; g),
$$ 
We remark, that it is essential for the cutoff functions $\phi$ and $\chi$ to have disjoint support, 
since otherwise the integrand behaves as $t^{-\dim M}$ on the diagonal and is not integrable.
Thus, \eqref{heat-resolvent} holds as equality of integral kernels and 
$\phi(i + D)^{-1} \chi$ is Hilbert-Schmidt, proving the claim in the
first case.\medskip

\noindent \underline{\emph{Proof of the second case:}} 
For the second case let us first consider the case where $\overline{M}$ is stratified of depth one.
Then its resolution $\widetilde{M}$ is a compact manifold with fibered boundary, denoted by $\partial M$ with a small abuse of notation.
$\partial M$ is the total space of a fibration $\phi: \partial M \to B$ over a compact
base manifold $B$ with fibres $F$ being compact manifolds as well. The wedge metric is conical on the fibres of
the boundary collar $\mathscr{U} \cong (0,1) \times \partial M$. In this setting the resolvent $(i + D)^{-1}$ is conveniently 
described as a polyhomogeneous (with bounds)
distribution on the edge space blowup of $\widetilde{M} \times \widetilde{M}$, obtained by 
blowing up the fibre diagonal 
$$
\textup{diag}_\phi (\partial M \times \partial M) = \{(p,p') | \phi(p) = \phi(p')\}.
$$
The blowup introduces a new boundary face ff and is illustrated in Figure \ref{edge-space}.
There, $x$ and $\widetilde{x}$ are boundary defining functions of the 
two copies of $\widetilde{M}$. Moreover, $(y,\widetilde{y})$ are local coordinates on the two copies of 
the base of the fibration $\phi: \partial M \to B$, so that locally $\textup{diag}_\phi (\partial M \times \partial M) = \{y=\widetilde{y}\}$.
The boundary faces rf and lf correspond to the boundary faces 
$\{x=0\}$ and $\{\widetilde{x}=0\}$ before the blowup, respectively. There is also a canonical blowdown map 
$$
\beta: \mathscr{M}^2_e \to \widetilde{M} \times \widetilde{M}.
$$

\begin{figure}[h]
\includegraphics[scale=0.6]{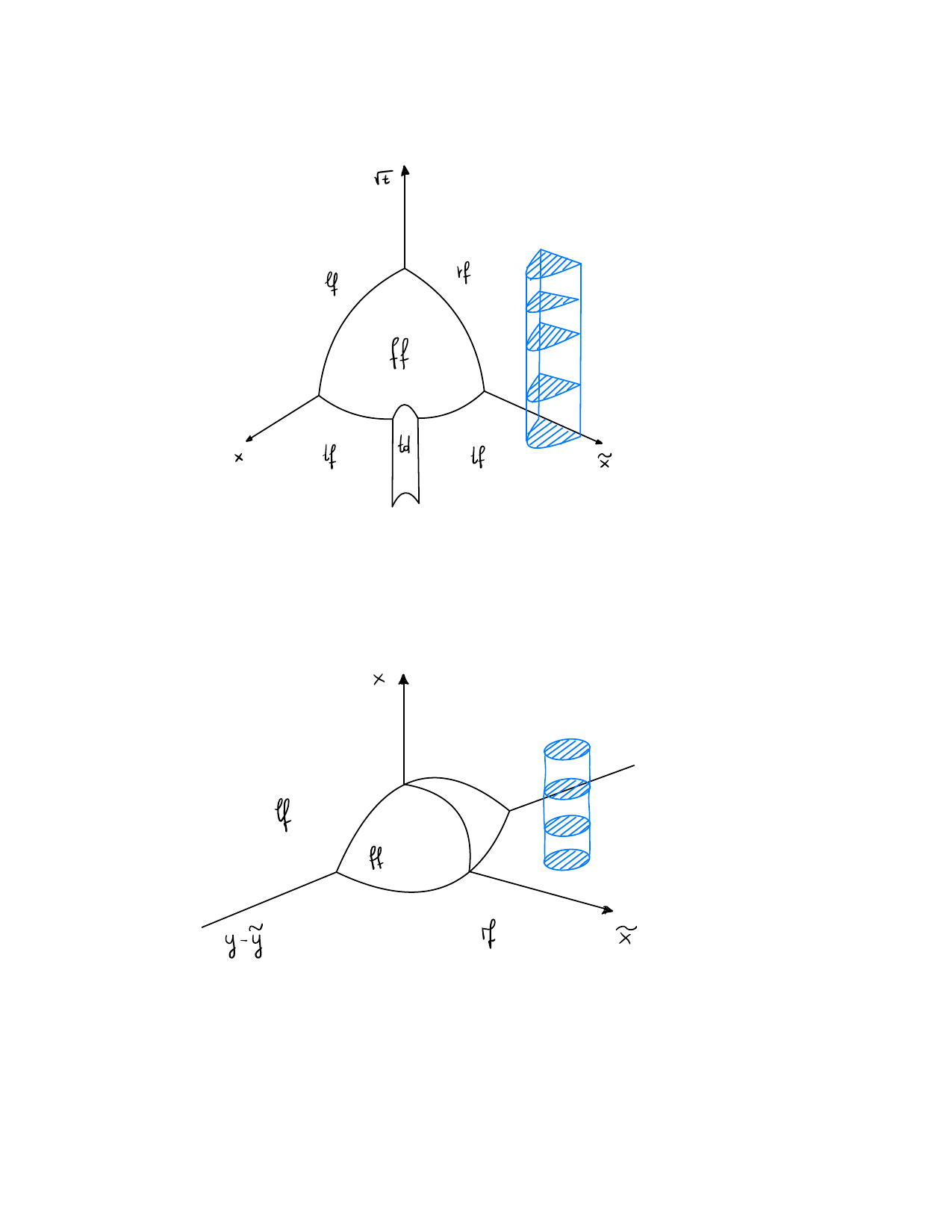}
\caption{The edge-space blowup $\mathscr{M}^2_e$ and support of $\phi e^{-t D^2} \chi$.}
\label{edge-space}
\end{figure}

\noindent
Since $\supp(\phi)\cap \supp(\chi)=\varnothing$, the lift $\beta^*\phi (i + D)^{-1} \chi$ is supported away 
from ff. Figure \ref{edge-space} also illustrates the support of $\beta^*\phi (i + D)^{-1} \chi$,
where we consider the generic case that $\supp(\phi) \cap \partial M \neq \varnothing$. In that case, 
support of $\beta^*\phi (i + D)^{-1} \chi$ in $\mathscr{M}^2_e$
is of the form as indicated by the blue shaded region. The shaded region may intersect the boundary face 
lf, but most importantly is supported away from the front face ff.  \medskip

Remark now that since 
we are considering an {\it adapted} iterated wedge metric, the operator $D$ satisfies what Albin and Gell-Redman
call the geometric Witt condition; this means that  we can indeed apply their results.
The microlocal description of the resolvent in Albin and Gell-Redman 
\cite[Theorem 4.3]{AlGe} asserts that the resolvent $(i + D)^{-1}$ lifts to a polyhomogeneous (with bounds)
distribution on $\mathscr{M}^2_e$ with a conormal singularity along the lifted diagonal.
In the generic case illustrated in Figure \ref{edge-space}, the lift
$\beta^* \phi (i + D)^{-1} \chi$ vanishes identically near ff and the lifted diagonal. 
Hence $\beta^* \phi (i + D)^{-1} \chi$ is smooth in the interior of 
$\mathscr{M}^2_e$, vanishing to infinite order at ff. \medskip

Let us now discuss the asymptotics of $\beta^* (i + D)^{-1}$ and $\beta^* \phi (i + D)^{-1} \chi$ 
at the left face lf and the right face rf. By symmetry, it suffices to study the right face rf only.
The asymptotics of $\beta^*(i + D)^{-1}$ (and hence also of $\beta^* \phi (i + D)^{-1} \chi$) at rf has a lower bound $\alpha'$, i.e. 
\begin{equation}\label{ap4}
\rho_{\textup{rf}}^{-\alpha'} \cdot  \beta^*(i + D)^{-1} 
\end{equation}
is a section of $\beta^*E$ (with the fibrewise inner product defined by $g$),
bounded near $\rho_{\textup{rf}} = 0$. The bound $\alpha'$
is determined by the spectrum of certain operators on the fibres $F$.
Instead of inferring the explicit value of $\alpha'$ from \cite[Theorem 4.3]{AlGe}, we argue as in the isolated case. Let us note that 
\eqref{ap4} translates for any $u \in C^\infty_c(M,\Lambda^* ({}^{w}T^*M))$ into
\begin{equation}\label{ap5}
x^{-\alpha'} \Bigl((i + D)^{-1} u\Bigr)(x)
\end{equation}
being bounded near $x = 0$ as a section of $\Lambda^* ({}^{w}T^*M)$ with the fibrewise inner product defined by $g$.
By definition, $(i + D)^{-1} u \in \dom (D)$ and hence in particular $(i + D)^{-1} u \in L^2(M, \Lambda^* ({}^{w}T^*M),g)$.
Exactly as above in \eqref{ap3}, this implies 
\begin{equation}\label{ap6}
2\alpha' > - \dim F -1.
\end{equation}
Since $\beta^* \phi (i + D)^{-1} \chi$ vanishes identically near ff and is smooth in the interior, 
it is the lift of a polyhomogeneous function on $\widetilde{M}\times \widetilde{M}$
with bound $\alpha'$
in its asymptotics as $x, \widetilde{x} \to 0$. Hence by \eqref{ap6}
\begin{align*}
\phi (i + D)^{-1} \chi \in L^2(M \times M, E; g).
\end{align*}
This proves the claim in the second case in case of stratification depth one.
In the general stratification depth, the front face ff in $\mathscr{M}^2_e$ in Figure \ref{edge-space} 
requires additional blowups, see \cite[Definition 3.1]{AlGe}. However, $\beta^*\phi (i + D)^{-1} \chi$
is supported away from ff and thus these blowups do not affect the argument. This completes the proof.
 \end{proof}

Before we continue to the next result, we shall point out that
in the Witt case and for an adapted metric as in the second case of Proposition \ref{witt-examples}, 
the Gauss-Bonnet operator is essentially self-adjoint on $(M_\Gamma,g_\Gamma)$,
see \cite[Proposition 6.3]{package}. Hence in that case there is no point in 
distinguishing $D_{\Gamma, \mathrm{abs}}$ and $D_{\Gamma, \mathrm{rel}}$.

\begin{prop}
\label{key-trick}
Let $U\subset \overline{M}_{\Gamma}$ be an open subset, 
such that $p|_U:U\rightarrow V$ is an isomorphism, with $V=p(U)$ and $p:\overline{M}_{\Gamma}\rightarrow \overline{M}$ 
the covering map. Let $\overline{\phi}$, $\overline{\chi}\in \mathcal{A}(\overline{M}_{\Gamma})$ with  $\supp(\overline{\phi})\cap \supp(\overline{\chi})=\varnothing$ and $\supp(\overline{\phi})\subset U$. If Assumption \ref{fundamental} holds true then $$\overline{\phi}(i+D_{\Gamma, \mathrm{abs}/\mathrm{rel}})^{-1}\overline{\chi}$$ is a Hilbert-Schmidt operator.
\end{prop}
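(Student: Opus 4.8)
The plan is to reduce the statement to Lemma \ref{lemma-1} by a commutator manoeuvre that trades the ``distant'' cutoff $\overline{\chi}$ for a string of cutoffs supported near $\supp(\overline{\phi})\subset U$. Throughout write $D:=D_{\Gamma,\mathrm{abs}/\mathrm{rel}}$ and $R:=(i+D)^{-1}$; the absolute and relative cases are treated identically. The basic tool is the resolvent identity
\[
R\,[D,\eta]\,R=\eta R-R\eta,\qquad \eta\in\mathcal{A}(\overline{M}_\Gamma),
\]
in which multiplication by $\eta$ preserves $\dom(D)$ and $[D,\eta]$ is the order-zero operator given by Clifford multiplication by $d\eta$, bounded once $\eta$ is taken with $d\eta\in L^\infty$---exactly the facts used in the proof of Lemma \ref{lemma-1}. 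It follows that whenever $A,B$ are bounded endomorphisms of $\Lambda^*({}^{w}T^*M_\Gamma)$ and $\eta\in\mathcal{A}(\overline{M}_\Gamma)$ satisfies $\eta\equiv1$ on $\supp(A)$ and $\supp(\eta)\cap\supp(B)=\varnothing$, then $ARB=AR\,[D,\eta]\,RB$, since $A\eta RB=ARB$ while $AR\eta B=0$.

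First I would fix the Schatten exponent $n$ furnished by \cite[Theorem 1.1]{Alvarez} for $(i+D_{\mathrm{abs}/\mathrm{rel}})^{-1}$, and then choose a nested family of open sets
\[
\supp(\overline{\phi})\subset W_{n-1}\subset\overline{W_{n-1}}\subset W_{n-2}\subset\cdots\subset\overline{W_1}\subset W_0\subset\overline{W_0}\subset U\setminus\supp(\overline{\chi}),
\]
available because $\supp(\overline{\phi})$ is closed, contained in the open set $U$, and disjoint from the closed set $\supp(\overline{\chi})$. For $1\le j\le n-1$ I would pick $\eta_j\in\mathcal{A}(\overline{M}_\Gamma)$ with $\eta_j\equiv1$ on $W_j$ and $\supp(\eta_j)\subset W_{j-1}$, taken smooth enough---there is no obstruction, since through $p|_U$ this takes place in a relatively compact region of $\overline{M}$---that $[D,\eta_j]$ is bounded and $[D,[D,\eta_j]]$ is relatively $D$-bounded. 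Then $\supp(d\eta_j)\subset W_{j-1}\setminus W_j$, so $\supp(d\eta_j)$ is disjoint from $W_j\supseteq\supp(\overline{\phi})$ and from $\supp(\eta_k)$ for all $k>j$, and the hypotheses of the commutator identity are met at each stage. Applying it first with $A=\overline{\phi},\ B=\overline{\chi},\ \eta=\eta_1$, and then with $A=\overline{\phi},\ B=[D,\eta_{j-1}],\ \eta=\eta_j$ for $j=2,\dots,n-1$, gives
\[
\overline{\phi}\,R\,\overline{\chi}=\bigl(\overline{\phi}\,R\bigr)\bigl([D,\eta_{n-1}]\,R\bigr)\bigl([D,\eta_{n-2}]\,R\bigr)\cdots\bigl([D,\eta_1]\,R\bigr)\circ\overline{\chi},
\]
a product of $n$ blocks of the form $P\,R$, each $P$ a bounded endomorphism supported in $U$, multiplied on the right by the bounded operator $\overline{\chi}$.

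To conclude I would invoke the computation in the proof of Lemma \ref{lemma-1} to see that each block $PR$ lies in the $n$-Schatten class. For $P=\overline{\phi}$ this is that proof verbatim; for $P=[D,\eta_j]$ the same argument applies with the scalar cutoff replaced by the bounded endomorphism $P$ supported in $U$: writing $\Psi$ for the isometry from that proof, $PR$ equals $\Psi^{-1}\,(i+D_{\mathrm{abs}/\mathrm{rel}})^{-1}\,\Psi\,\bigl((i+D)\,PR\bigr)$, a composition of the isometries $\Psi^{\pm1}$, the operator $(i+D_{\mathrm{abs}/\mathrm{rel}})^{-1}$ which is $n$-Schatten by \cite[Theorem 1.1]{Alvarez}, and the bounded operator $(i+D)PR=P+[D,P]R$ (bounded since $P$ is bounded and $[D,P]$ is $D$-bounded by the choice of $\eta_j$). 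By Hölder's inequality for Schatten norms the product of the $n$ blocks is then trace-class, in particular Hilbert--Schmidt, and multiplying by the bounded operator $\overline{\chi}$ keeps it Hilbert--Schmidt.

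I expect the only real subtlety to be conceptual rather than computational: $(i+D_{\Gamma,\mathrm{abs}/\mathrm{rel}})^{-1}$ on the covering is genuinely non-local (this non-locality is precisely what produces continuous spectrum and hence the Novikov--Shubin invariants), so one cannot simply restrict it to $U$ or push it down to $\overline{M}$. The commutator identity is the device that turns the non-local resolvent, sandwiched between the far-apart cutoffs $\overline{\phi}$ and $\overline{\chi}$, into a product of resolvents each localised next to a single cutoff supported in the evenly covered set $U$, at which point the localisation-to-the-base mechanism of Lemma \ref{lemma-1} takes over. The remaining points---that multiplication by the $\eta_j$ preserves the minimal/maximal domains, and that the $\eta_j$ can be chosen regular enough for the relevant commutators to be $D$-bounded---are routine and handled exactly as in Lemma \ref{lemma-1}, so I do not anticipate a genuine obstacle there.
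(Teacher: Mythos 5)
Your overall strategy --- iterate the commutator identity $\eta R - R\eta = R[D,\eta]R$ to rewrite $\overline{\phi}R\overline{\chi}$ as a product of $n$ blocks, each localised in $U$, and then feed each block into the localisation mechanism from the proof of Lemma \ref{lemma-1} --- is coherent, but the justification of the block-level Schatten bounds has a genuine gap, and the fact that your proof never invokes Assumption \ref{fundamental} should have set off an alarm. Concretely: to show that a block $[D,\eta_j]R$ is $n$-Schatten you conjugate by $\Psi$ and need $(i+D)[D,\eta_j]R = [D,\eta_j] + [D,[D,\eta_j]]R$ to be bounded, i.e.\ $[D,[D,\eta_j]]$ to be relatively $D$-bounded. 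Since $[D,\eta_j]$ is Clifford multiplication by $d\eta_j$, the double commutator is multiplication by (essentially) the Hessian of $\eta_j$ with respect to the incomplete wedge metric, and for a cutoff in $\mathcal{A}(\overline{M}_{\Gamma})$ this Hessian generically blows up like $x^{-1}$ near a singular stratum (on the model cone $dx^2+x^2g_F$ the Christoffel term contributes $|\eta_j'|/x$). ``Relatively compact'' does not help, because the relevant region contains singular strata. Relative $D$-boundedness of multiplication by $x^{-1}$ is a Hardy-type inequality for $\dom(D_{\mathrm{abs}/\mathrm{rel}})$ that is not established in the paper and is delicate for the maximal extension; it is exactly the kind of fine domain information the paper is structured to avoid. (The step could be patched: write $[D,\eta_j]R = c(d\eta_j)\cdot(\zeta_jR)$ with a scalar $\zeta_j\in\mathcal{A}(\overline{M}_{\Gamma})$ equal to $1$ on $\supp(d\eta_j)$ and supported in $U$, and apply the single-factor computation from the proof of Lemma \ref{lemma-1} to $\zeta_jR$, which uses only first derivatives of the cutoff.)

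The paper's proof is structurally different and more economical: it performs a single commutator step with a scalar cutoff $\overline{\theta}$, pushes the resulting operator identity down to the base via $\Psi$, and then quotes Assumption \ref{fundamental} for the pair of disjointly supported cutoffs $\phi,\gamma$ on $\overline{M}$; every remaining factor is bounded and no second derivatives of cutoffs ever appear. By contrast your argument, once patched as above, would prove the proposition (indeed with trace-class in place of Hilbert--Schmidt) using only the Schatten bound imported through Lemma \ref{lemma-1} and never Assumption \ref{fundamental}; applied to the trivial covering it would establish Assumption \ref{fundamental} itself in full generality, whereas the authors state explicitly that they can verify it only for isolated conical singularities and for Witt spaces, via nontrivial microlocal analysis. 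Before trusting the patched version you must therefore audit its inputs --- in particular whether the Schatten-class statement from \cite[Theorem 1.1]{Alvarez} and the domain-preservation and Leibniz facts for $\mathcal{A}$-cutoffs really hold in the stated generality --- because as it stands your proof either establishes more than the authors claim to know or is silently using an ingredient outside its range of validity.
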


\begin{proof}
Let $\Psi$ be the isometry defined in the proof of Lemma \ref{lemma-1}.
Let $\overline{\theta}\in \mathcal{A}(\overline{M}_{\Gamma})$ with $\supp(\overline{\theta})\subset U$, 
$\supp(\overline{\theta})\cap \supp(\overline{\chi})=\varnothing$ and $\overline{\theta} \equiv 1$ on an open neighbourhood of $\supp(\overline{\phi})$. 
Since $\overline{\theta}\in \mathcal{A}(\overline{M}_{\Gamma})$ the following operator $[D_{\Gamma,\mathrm{abs}/\mathrm{rel}},\overline{\theta}](i+D_{\Gamma, \mathrm{abs}/\mathrm{rel}})^{-1}\overline{\chi}$ is well defined and we have
\begin{equation}\label{start}
\begin{split}
&[D_{\Gamma,\mathrm{abs}/\mathrm{rel}},\overline{\theta}](i+D_{\Gamma, \mathrm{abs}/\mathrm{rel}})^{-1}\overline{\chi}\\
= \, &D_{\Gamma,\mathrm{abs}/\mathrm{rel}}\overline{\theta}(i+D_{\Gamma, \mathrm{abs}/\mathrm{rel}})^{-1}\overline{\chi}-\overline{\theta}(-i+i+D_{\Gamma, \mathrm{abs}/\mathrm{rel}})(i+D_{\Gamma, \mathrm{abs}/\mathrm{rel}})^{-1}\overline{\chi}\\
= \, &(i+D_{\Gamma, \mathrm{abs}/\mathrm{rel}})\overline{\theta}(i+D_{\Gamma, \mathrm{abs}/\mathrm{rel}})^{-1}\overline{\chi}-\overline{\theta}\overline{\chi}\\
= \, &(i+D_{\Gamma, \mathrm{abs}/\mathrm{rel}})\overline{\theta}(i+D_{\Gamma, \mathrm{abs}/\mathrm{rel}})^{-1}\overline{\chi}.
\end{split}
\end{equation}
Let now introduce a second auxiliary function $\overline{\zeta}\in \mathcal{A}(\overline{M}_{\Gamma})$ 
with $\supp(\overline{\zeta})\subset U$ and $\overline{\zeta}\equiv 1$ on $\supp([D_{\Gamma,\mathrm{abs}/\mathrm{rel}},\overline{\theta}])$. 
Then $[D_{\Gamma,\mathrm{abs}/\mathrm{rel}},\overline{\theta}]\overline{\zeta} \equiv [D_{\Gamma,\mathrm{abs}/\mathrm{rel}},\overline{\theta}]$.
Applying $\Psi$ on both sides of \eqref{start} implies
$$
\begin{aligned}
\Psi[D_{\Gamma,\mathrm{abs}/\mathrm{rel}},\overline{\theta}]\overline{\zeta}(i+D_{\Gamma, \mathrm{abs}/\mathrm{rel}})^{-1}\overline{\chi}&=\Psi[D_{\Gamma,\mathrm{abs}/\mathrm{rel}},\overline{\theta}](i+D_{\Gamma, \mathrm{abs}/\mathrm{rel}})^{-1}\overline{\chi}\\&=\Psi(i+D_{\Gamma, \mathrm{abs}/\mathrm{rel}})\overline{\theta}(i+D_{\Gamma, \mathrm{abs}/\mathrm{rel}})^{-1}\overline{\chi}.
\end{aligned}
$$ 
Let us denote  $\theta:=\overline{\theta} \circ (p|_U)^{-1}$.
The presence of $\overline{\zeta}$ allows us to commute $\Psi$ with the commutator and yields
$$
[D_{\mathrm{abs}/\mathrm{rel}},\theta]\Psi\overline{\zeta}(i+D_{\Gamma, \mathrm{abs}/\mathrm{rel}})^{-1}\overline{\chi}=(i+D_{\mathrm{abs}/\mathrm{rel}})\Psi\overline{\theta}(i+D_{\Gamma, \mathrm{abs}/\mathrm{rel}})^{-1}\overline{\chi}.
$$ 
Applying the resolvent on both sides of this equality implies
$$
(i+D_{\mathrm{abs}/\mathrm{rel}})^{-1}[D_{\mathrm{abs}/\mathrm{rel}},\theta]\Psi\overline{\zeta}(i+D_{\Gamma, \mathrm{abs}/\mathrm{rel}})^{-1}\overline{\chi}=\Psi\overline{\theta}(i+D_{\Gamma, \mathrm{abs}/\mathrm{rel}})^{-1}\overline{\chi}.
$$ 
Therefore, by multiplying both sides with $\phi:=\overline{\phi} \circ (p|_U)^{-1}$, we get 
$$
\begin{aligned}
\phi(i+D_{\mathrm{abs}/\mathrm{rel}})^{-1}[D_{\mathrm{abs}/\mathrm{rel}},\theta]\Psi\overline{\zeta}(i+D_{\Gamma, \mathrm{abs}/\mathrm{rel}})^{-1}\overline{\chi}=\phi\Psi\overline{\theta}(i+D_{\Gamma, \mathrm{abs}/\mathrm{rel}})^{-1}\overline{\chi}\\
=\Psi\overline{\phi}\, \overline{\theta}(i+D_{\Gamma, \mathrm{abs}/\mathrm{rel}})^{-1}\overline{\chi}
=\Psi\overline{\phi}(i+D_{\Gamma, \mathrm{abs}/\mathrm{rel}})^{-1}\overline{\chi}
\end{aligned}
$$  
and so we arrive at
$$
\Psi^{-1}\phi(i+D_{\mathrm{abs}/\mathrm{rel}})^{-1}
[D_{\mathrm{abs}/\mathrm{rel}},\theta]\Psi\overline{\zeta}
(i+D_{\Gamma, \mathrm{abs}/\mathrm{rel}})^{-1}\overline{\chi}=
\overline{\phi}(i+D_{\Gamma, \mathrm{abs}/\mathrm{rel}})^{-1}\overline{\chi}.
$$ 
Note now that $\supp(\phi)\subset V$, $\phi\in \mathcal{A}(\overline{M})$ and 
$\supp(\phi)\cap \supp([D_{\mathrm{abs}/\mathrm{rel}},\theta])=\varnothing$. 
Hence there exists $\gamma\in \mathcal{A}(\overline{M})$ such that 
$\gamma [D_{\mathrm{abs}/\mathrm{rel}},\theta]=[D_{\mathrm{abs}/\mathrm{rel}},\theta]$ and 
$\supp(\phi)\cap \supp(\gamma)=\varnothing$. We obtain
$$
\Psi^{-1}\phi(i+D_{\mathrm{abs}/\mathrm{rel}})^{-1}\gamma
[D_{\mathrm{abs}/\mathrm{rel}},\theta]\Psi\overline{\zeta}(i+D_{\Gamma, \mathrm{abs}/\mathrm{rel}})^{-1}
\overline{\chi}=\overline{\phi}(i+D_{\Gamma, \mathrm{abs}/\mathrm{rel}})^{-1}\overline{\chi}
$$
Finally since both $[D_{\mathrm{abs}/\mathrm{rel}},\theta]$ and 
$(i+D_{\Gamma, \mathrm{abs}/\mathrm{rel}})^{-1}$ are bounded operators 
and $\phi(i+D_{\mathrm{abs}/\mathrm{rel}})^{-1}\gamma$ is a Hilbert-Schmidt operator by Assumption \ref{fundamental}, 
we can conclude that $\overline{\phi}(i+D_{\Gamma, \mathrm{abs}/\mathrm{rel}})^{-1}\overline{\chi}$ is also Hilbert-Schmidt, as required.
\end{proof}

\begin{prop}
In the setting of Proposition \ref{key-trick} the operator 
$$
(\textup{Id}+\Delta_{\Gamma,\mathrm{abs}/\mathrm{rel}})^{-n}
$$ 
is $\Gamma$-trace class for $n \gg 0$ sufficiently large.
\end{prop}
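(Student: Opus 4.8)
The plan is to localise everything on the compact base $\overline{M}$, to reduce the $\Gamma$-trace-class assertion to an ordinary (non-equivariant) Hilbert--Schmidt bound for a suitably cut-off power of the resolvent, and then to read off the conclusion from the standard partition-of-unity formula for the $\Gamma$-trace. First I would fix a finite open cover $\{V_\alpha\}$ of $\overline{M}$ by sets which, together with their closures, are contained in evenly covered open sets $V'_\alpha$ (so $p^{-1}(V'_\alpha)$ is a disjoint union of isometric copies of $V'_\alpha$), together with a subordinate partition of unity $\{\phi_\alpha\}\subset\mathcal{A}(\overline{M})$. Lifting each $\phi_\alpha$ to $\widetilde{\phi}_\alpha\in\mathcal{A}(\overline{M}_\Gamma)$ supported in one component $U_\alpha$ of $p^{-1}(V_\alpha)$ produces a lift of a $\Gamma$-partition of unity, $\sum_\alpha p^{*}\phi_\alpha=1$ with $p^{*}\phi_\alpha=\sum_{\gamma\in\Gamma}\gamma^{*}\widetilde{\phi}_\alpha$; I would also choose $\widetilde{\psi}_\alpha\in\mathcal{A}(\overline{M}_\Gamma)$ supported in the enlarged copy $U'_\alpha$ of $V'_\alpha$ with $\widetilde{\psi}_\alpha\equiv 1$ on $\supp\widetilde{\phi}_\alpha$.

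The technical core is the claim that for every $\alpha$ and every sufficiently large $N$ the operator $(i+D_{\Gamma,\mathrm{abs}/\mathrm{rel}})^{-N}\widetilde{\psi}_\alpha$ is Hilbert--Schmidt in the ordinary sense. To prove it, abbreviate $R:=(i+D_{\Gamma,\mathrm{abs}/\mathrm{rel}})^{-1}$ and introduce a nested chain of cut-offs $\widetilde{\psi}_\alpha=:\eta^{(-1)},\eta^{(0)},\dots,\eta^{(N-1)}\in\mathcal{A}(\overline{M}_\Gamma)$, all supported in $U'_\alpha$, with $\eta^{(j)}\equiv 1$ on a neighbourhood of $\supp\eta^{(j-1)}$; there is room for arbitrarily many such cut-offs inside the fixed patch $U'_\alpha$, and only their number, not any uniform control of $d\eta^{(j)}$, will matter. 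Starting from $R^{N}\widetilde{\psi}_\alpha$ and substituting $R\,\eta^{(j-1)}=\eta^{(j)}R\,\eta^{(j-1)}+(1-\eta^{(j)})R\,\eta^{(j-1)}$ repeatedly, one telescopes to
\begin{equation*}
(i+D_{\Gamma,\mathrm{abs}/\mathrm{rel}})^{-N}\widetilde{\psi}_\alpha \;=\; \bigl(\eta^{(N-1)}R\,\eta^{(N-2)}R\cdots\eta^{(0)}R\bigr)\widetilde{\psi}_\alpha \;+\; \bigl(\text{finite sum of Hilbert--Schmidt operators}\bigr).
\end{equation*}
Indeed, each term in the error carries a factor $(1-\eta^{(j)})R\,\eta^{(j-1)}$ with $\supp\eta^{(j-1)}\subset U'_\alpha$ and $\supp(1-\eta^{(j)})\cap\supp\eta^{(j-1)}=\varnothing$, hence Hilbert--Schmidt by Proposition~\ref{key-trick} — which I would use also in the form with $i$ replaced by $-i$, valid by the same argument since Assumption~\ref{fundamental} is symmetric in its two cut-offs and $[\phi(i+D_{\mathrm{abs}/\mathrm{rel}})^{-1}\chi]^{*}=\chi(-i+D_{\mathrm{abs}/\mathrm{rel}})^{-1}\phi$ for real-valued $\phi,\chi$ — and therefore remains Hilbert--Schmidt after being flanked by bounded operators. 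The leading term $\eta^{(N-1)}R\,\eta^{(N-2)}R\cdots\eta^{(0)}R=\prod_{k=1}^{N}(\eta^{(N-k)}R)$ is exactly of the shape treated in Lemma~\ref{lemma-1}, all cut-offs being supported in the single evenly covered patch $U'_\alpha$, so it is Hilbert--Schmidt as soon as $N$ exceeds the threshold there, and multiplying by the bounded operator $\widetilde{\psi}_\alpha$ preserves this. This proves the claim.

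Granting the claim, I would pick $N$ as above and use $(\mathrm{Id}+\Delta_{\Gamma,\mathrm{abs}/\mathrm{rel}})^{-N}=(-i+D_{\Gamma,\mathrm{abs}/\mathrm{rel}})^{-N}(i+D_{\Gamma,\mathrm{abs}/\mathrm{rel}})^{-N}$ together with boundedness of the first factor to deduce that $(\mathrm{Id}+\Delta_{\Gamma,\mathrm{abs}/\mathrm{rel}})^{-N}\widetilde{\psi}_\alpha$, and hence also $(\mathrm{Id}+\Delta_{\Gamma,\mathrm{abs}/\mathrm{rel}})^{-N}\widetilde{\phi}_\alpha^{1/2}=[(\mathrm{Id}+\Delta_{\Gamma,\mathrm{abs}/\mathrm{rel}})^{-N}\widetilde{\psi}_\alpha]\,\widetilde{\phi}_\alpha^{1/2}$ (using $\widetilde{\psi}_\alpha\widetilde{\phi}_\alpha^{1/2}=\widetilde{\phi}_\alpha^{1/2}$ and boundedness of $\widetilde{\phi}_\alpha^{1/2}$), is Hilbert--Schmidt. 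By the standard description of the $\Gamma$-trace relative to the lifted partition of unity $\{\widetilde{\phi}_\alpha\}$ (see e.g.\ \cite{Lueck-book,Schick}),
\begin{equation*}
\Tr_\Gamma\bigl((\mathrm{Id}+\Delta_{\Gamma,\mathrm{abs}/\mathrm{rel}})^{-2N}\bigr)=\sum_\alpha\Tr\bigl(\widetilde{\phi}_\alpha^{1/2}(\mathrm{Id}+\Delta_{\Gamma,\mathrm{abs}/\mathrm{rel}})^{-2N}\widetilde{\phi}_\alpha^{1/2}\bigr)=\sum_\alpha\bigl\|(\mathrm{Id}+\Delta_{\Gamma,\mathrm{abs}/\mathrm{rel}})^{-N}\widetilde{\phi}_\alpha^{1/2}\bigr\|_{\mathrm{HS}}^{2},
\end{equation*}
a finite sum of finite numbers, so $(\mathrm{Id}+\Delta_{\Gamma,\mathrm{abs}/\mathrm{rel}})^{-2N}$ is $\Gamma$-trace class; for any $n\geq 2N$ one then factors $(\mathrm{Id}+\Delta_{\Gamma,\mathrm{abs}/\mathrm{rel}})^{-n}=(\mathrm{Id}+\Delta_{\Gamma,\mathrm{abs}/\mathrm{rel}})^{-2N}(\mathrm{Id}+\Delta_{\Gamma,\mathrm{abs}/\mathrm{rel}})^{-(n-2N)}$ and invokes the two-sided ideal property of $\Gamma$-trace-class operators.

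The main obstacle is the localisation claim: it requires interlocking the ``diagonal'' estimate of Lemma~\ref{lemma-1} (a long product of resolvents, all cut off inside one patch) with the ``off-diagonal'' estimate of Proposition~\ref{key-trick} (the disjoint-support commutator trick, which trades one resolvent for a bounded commutator and peels a cut-off away) so that after exactly $N$ peeling steps what remains is an operator to which Lemma~\ref{lemma-1} literally applies. The feature that makes this succeed with merely qualitative inputs is that every peeling step produces only a \emph{single} Hilbert--Schmidt error factor, so that no summation over $\Gamma$ — and hence no quantitative off-diagonal decay of the resolvent — is ever needed.
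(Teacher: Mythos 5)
Your proof is correct and follows essentially the same strategy as the paper: factor $(\mathrm{Id}+\Delta_{\Gamma,\mathrm{abs}/\mathrm{rel}})^{-n}$ into resolvent powers, localize with a partition of unity subordinate to evenly covered patches, peel off cutoffs one resolvent at a time using Proposition \ref{key-trick} for the disjoint-support error terms, and apply Lemma \ref{lemma-1} to the fully localized product that remains. The only cosmetic differences are that you place the cutoffs on the right of the resolvent (requiring the adjoint/$-i$ remark, which you correctly supply) and that you assemble the $\Gamma$-trace via the partition-of-unity formula at the end, whereas the paper reduces directly to Atiyah's definition of $\Gamma$-Hilbert--Schmidt with the cutoff on the left.
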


\begin{proof}
Since the argument is the same for both $\Delta_{\Gamma,\mathrm{abs}}$ 
and $\Delta_{\mathrm{rel}}$ we drop the subscription $\mathrm{abs}/\mathrm{rel}$ in 
the rest of the proof. Now we note that $(\textup{Id}+\Delta_{\Gamma})^{-n}=
(i+D_{\Gamma})^{-n/2}\circ(i-D_{\Gamma})^{-n/2}$. Thus it is enough to prove that both 
$(i+D_{\Gamma})^{-n/2}$ and $(i-D_{\Gamma})^{-n/2}$ are $\Gamma$-Hilbert-Schmidt. 
We show now that $(i+D_{\Gamma})^{-n/2}$ is  $\Gamma$-Hilbert-Schmidt. The proof of the 
other case is identical. According to \cite[Definition (4.3)']{Ati} we need to show that $\theta(i+D_{\Gamma})^{-n/2}$ 
is Hilbert-Schmidt, with $\theta$ an arbitrarily fixed bounded measurable function with compact support on 
$\overline{M}_{\Gamma}$. Let $\{U_1,...,U_{\ell}\}$ be a finite open cover of $\supp(\theta)$ such that $p|_{U_k}:U_k\rightarrow V_k=p(U_k)$ is an isomorphism. Thanks to \cite[Proposition 3.2.2]{You} we can find $\psi_1,...,\psi_k\in \mathcal{A}(\overline{M}_{\Gamma})$ such that  $\supp(\psi_k)\subset U_k$ and $\sum_{k=1}^{\ell}\psi_k(x)=1$ for each $x\in \supp(\theta)$. Thus we have
$$
\begin{aligned}
\theta(i+D_{\Gamma})^{-n/2}=\theta\left(\sum_{k=1}^{\ell}\psi_k\right)(i+D_{\Gamma})^{-n/2}=\sum_{k=1}^{\ell}\theta\psi_k(i+D_{\Gamma})^{-n/2}.
\end{aligned}
$$
Since $\theta$ is bounded, it is enough to show that $\psi_k(i+D_{\Gamma})^{-n/2}$ is $\Gamma$-Hilbert-Schmidt for any $k$. 
Without loss of generality we can assume that $n$ is even. Let $q=n/2$ and  $\phi_1\in \mathcal{A}(\overline{M}_{\Gamma})$ 
with $\supp(\phi_1)\subset U_k$ and $\supp(\psi_k)\cap \supp(1-\phi_1)=\varnothing$. We have
$$
\begin{aligned}
\psi_k(i+D_{\Gamma})^{-n/2}&=\psi_k(i+D_{\Gamma})^{-1}(i+D_{\Gamma})^{1-q}\\
&=\psi_k(i+D_{\Gamma})^{-1} (\phi_1+1-\phi_1)(i+D_{\Gamma})^{1-q}\\
&= \psi_k(i+D_{\Gamma})^{-1}\phi_1(i+D_{\Gamma})^{1-q}+\psi_k(i+D_{\Gamma})^{-1}(1-\phi_1)(i+D_{\Gamma})^{1-q}. 
\end{aligned}
$$
Note that $\psi_k(i+D_{\Gamma})^{-1}(1-\phi_1)(i+D_{\Gamma})^{1-q}$ is 
Hilbert-Schmidt, since $(i+D_{\Gamma})^{1-q}$ is bounded and $\psi_k(i+D_{\Gamma})^{-1}(1-\phi_1)$ is 
Hilbert-Schmidt thanks to Proposition \ref{key-trick}. Thus $\psi_k(i+D_{\Gamma})^{-p/2}$ is Hilbert-Schmidt if and only if $\psi_k(i+D_{\Gamma})^{-1}\phi_1(i+D_{\Gamma})^{1-q}$ is Hilbert-Schmidt. By picking
$\phi_1,...,\phi_{q-1}\in \mathcal{A}(\overline{M}_{\Gamma})$ with $\supp(\phi_j)\subset U_k$ and 
$\supp(\phi_j)\cap \supp(1-\phi_{j+1})=\varnothing$ for each $j=2,...,q-1$ and iterating the above procedure,
we get that $\psi_k(i+D_{\Gamma})^{-n/2}$ is Hilbert-Schmidt if and only if the composition 
\begin{equation}
\label{HScomposition}
\psi_k(i+D_{\Gamma})^{-1}\phi_1(i+D_{\Gamma})^{-1}...\phi_{q-1}(i+D_{\Gamma})^{-1}
\end{equation}
is Hilbert-Schmidt. Thanks to Lemma \ref{lemma-1} we know that \eqref{HScomposition} is 
Hilbert-Schmidt. We can thus finally conclude that $\psi_k(i+D_{\Gamma})^{-n/2}$ is also Hilbert-Schmidt.
\end{proof}

\begin{remark}
Note that this statement in the Witt case, where the operator is in fact essentially self-adjoint,
 is already claimed in \cite[Proposition 7.4]{PiVe1}. However, the argument given there is very short
 and does not explain properly the complexity of the proof.
 \end{remark}

\begin{thm}\label{trace-class-bottom-up} Let $f:\R \to \R$ be any rapidly decreasing function. Then
 $$f(\Delta_{\Gamma, \mathrm{abs}/\mathrm{rel}})$$ is $\Gamma$-trace class. 
In particular, the corresponding heat operators
as well as spectral projections to finite intervals are $\Gamma$-trace class.
\end{thm}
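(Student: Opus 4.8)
The plan is to reduce everything to the previous proposition, which asserts that $(\textup{Id}+\Delta_{\Gamma,\mathrm{abs}/\mathrm{rel}})^{-n}$ is $\Gamma$-trace class once $n$ is large enough. Fix such an $n$ and, as before, drop the subscript $\mathrm{abs}/\mathrm{rel}$ since the argument is identical in both cases. The two background facts I would invoke are: (i) since $\Delta_\Gamma$ is a non-negative self-adjoint operator commuting with the $\Gamma$-action, its Borel functional calculus produces, for every bounded Borel function $h$ on $[0,\infty)$, a bounded operator $h(\Delta_\Gamma)$ that again commutes with $\Gamma$ (it lies in the von Neumann algebra generated by the spectral projections of $\Delta_\Gamma$, which are $\Gamma$-equivariant); and (ii) the $\Gamma$-trace class operators form a two-sided ideal inside the bounded $\Gamma$-equivariant operators, so composing a bounded $\Gamma$-equivariant operator with a $\Gamma$-trace class one stays $\Gamma$-trace class. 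Both are standard in the von Neumann framework and can be cited from \cite{Lueck-book} or \cite{Schick}.

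Given a rapidly decreasing $f\colon\R\to\R$, set $g(\lambda):=(1+\lambda)^{n}f(\lambda)$ for $\lambda\in[0,\infty)$; since $\spectrum(\Delta_\Gamma)\subset[0,\infty)$ only the restriction of $f$ to $[0,\infty)$ is relevant, and there $g$ is bounded and Borel because $f$ decays faster than any polynomial. By multiplicativity of the functional calculus,
\[
f(\Delta_\Gamma)\;=\;g(\Delta_\Gamma)\,(\textup{Id}+\Delta_\Gamma)^{-n},
\]
which writes $f(\Delta_\Gamma)$ as the composition of the bounded $\Gamma$-equivariant operator $g(\Delta_\Gamma)$ with the $\Gamma$-trace class operator $(\textup{Id}+\Delta_\Gamma)^{-n}$; by the ideal property it is $\Gamma$-trace class. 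This proves the first assertion.

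The two ``in particular'' cases follow the same scheme. For the heat operator, take $f_t(\lambda)=e^{-t\lambda}$ with $t>0$: on $[0,\infty)$ this is rapidly decreasing, so $e^{-t\Delta_\Gamma}=f_t(\Delta_\Gamma)$ is $\Gamma$-trace class. For a spectral projection $P_{[a,b]}=\chi_{[a,b]}(\Delta_\Gamma)$ onto a finite interval, the function $\chi_{[a,b]}(\lambda)(1+\lambda)^{n}$ is bounded and Borel (it has compact support), so
\[
P_{[a,b]}=\bigl[\chi_{[a,b]}(\Delta_\Gamma)(\textup{Id}+\Delta_\Gamma)^{n}\bigr](\textup{Id}+\Delta_\Gamma)^{-n}
\]
is again bounded-$\Gamma$-equivariant times $\Gamma$-trace class, hence $\Gamma$-trace class.

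There is essentially no obstacle here beyond bookkeeping: all the analytic content is contained in the previous proposition, and what remains is to quote that the Borel functional calculus of a $\Gamma$-equivariant self-adjoint operator lands in $\mathscr{N}\Gamma$ and that the $\Gamma$-trace ideal is two-sided. If one wants to be careful about the meaning of ``rapidly decreasing'', I would simply require $f$ to be a bounded Borel function with $\sup_{\lambda\ge 0}(1+\lambda)^{n}|f(\lambda)|<\infty$ for the fixed $n$ above, which is all the argument uses.
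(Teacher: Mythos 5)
Your proposal is correct and follows essentially the same route as the paper: factor $f(\Delta_\Gamma)=g(\Delta_\Gamma)\,(\textup{Id}+\Delta_\Gamma)^{-n}$ with $g(\lambda)=(1+\lambda)^n f(\lambda)$ bounded, and invoke the ideal property of the $\Gamma$-trace class together with the preceding proposition. The only difference is that you spell out the ``in particular'' cases and the equivariance of the functional calculus, which the paper leaves implicit.
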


\begin{proof}
Let us consider the function $g(z):= (1+z)^n f(z)$ with $n>0$ sufficiently large. Note that $g$ is a bounded function
and thus $g(\Delta_{\Gamma, \mathrm{abs}/\mathrm{rel}})$ is a bounded operator. Writing $f(z)=(1+z)^{-n} g(z)$ we conclude that 
$$
\begin{aligned}
f(\Delta_{\Gamma, \mathrm{abs}/\mathrm{rel}})&= (\textup{Id}+\Delta_{\Gamma, \mathrm{abs}/\mathrm{rel}})^{-n} \circ 
\Bigl((\textup{Id}+\Delta_{\Gamma, \mathrm{abs}/\mathrm{rel}})^{n}f(\Delta_{\Gamma, \mathrm{abs}/\mathrm{rel}})\Bigr)\\
&=(1+\Delta_{\Gamma, \mathrm{abs}/\mathrm{rel}})^{-n} g(\Delta_{\Gamma, \mathrm{abs}/\mathrm{rel}}),
\end{aligned}
$$ 
is the composition of a bounded operator $g(\Delta_{\Gamma, \mathrm{abs}/\mathrm{rel}})$ with a 
$\Gamma$-trace class operator $(\textup{Id}+\Delta_{\Gamma, \mathrm{abs}/\mathrm{rel}})^{-n}$. 
Thus $f(\Delta_{\Gamma, \mathrm{abs}/\mathrm{rel}})$ is $\Gamma$-trace class, as required.
\end{proof}

We have now everything in place to define $L^2$-Betti numbers and 
Novikov-Shubin invariants on $\overline{M}_\Gamma$, thereby proving our first main Theorem \ref{main1}. \medskip

\subsection{Definition of $L^2$-Betti numbers and Novikov-Shubin invariants}

We begin by recalling the notion of 
$\Gamma$-dimension of a   Hilbert $\mathscr{N}\Gamma$-module.  As before, we refer the reader
to \cite[\S 1 and \S 2]{Lueck-book} and \cite{Schick} for more details. 
Continuing in the notion of Definition \ref{gamma-notions1}, we have the following.

\begin{defn}\label{gamma-notions3} Recall, a $\mathscr{N}\Gamma$-Hilbert module $H$ is a Hilbert space 
with an isometric linear $\Gamma$-embedding of $H$ into the tensor product $\mathcal{H}\otimes \ell^2 \Gamma$. 
The $\Gamma$ dimension of a Hilbert $\mathscr{N}\Gamma$-module $H$
is defined as the von Neuman trace of the orthogonal projection 
$\mathcal{P}:\mathcal{H}\otimes L^2(\Gamma)\rightarrow H$. Explicitly, let $\{b_i\}$ be an 
arbitrarily fixed Hilbert basis of $\mathcal{H}$ and $e\in \Gamma$ the unit element, 
see \cite[Definition 1.8]{Lueck-book}. Then we have 
\begin{align}\label{gamma-dim}
\dim_{\Gamma}(H):= \mathrm{Tr}_{\Gamma}(\mathcal{P})=
\sum_i\langle \mathcal{P}(b_i\otimes e),b_i\otimes \delta_e\rangle_{\mathcal{H}\otimes L^2(\Gamma)} \in [0,\infty].
\end{align}
\end{defn}
As explained in \cite[p. 17]{Lueck-book} the above definition is well-posed and independent of the choices made.
Now let us consider such an $\mathscr{N}\Gamma$-complex $H_\bullet:=(\dom(d),d)$
and define in terms of the adjoint $d^*$ the associated Laplace operator 
\begin{align}
\Delta_k:=d_k^*\circ d_k+d_{k-1}\circ d_{k-1}^*.
\end{align}
It is clear that $\Delta_k$ commutes with the $\Gamma$-action and thus 
$\ker(\Delta_k)$ has the structure of  Hilbert $\mathscr{N}\Gamma$-module.
We can therefore define in view of \eqref{gamma-dim}.

\begin{defn}
The k-th $L^2$-Betti number of $H_\bullet:=(\dom(d),d)$ is defined as 
\begin{align}
b^k_{(2),\Gamma}(H_{\bullet}):=\dim_{\Gamma}(\ker(\Delta_k)) \in [0,\infty].
\end{align}
\end{defn}

In order to define Novikov-Shubin invariants, consider for each $k$ and $\lambda\geq 0$
\begin{equation}
\begin{split}
&d^{\perp}_k:=d_k \restriction \dom(d_k)\cap \overline{\mathrm{im}(d_{k-1})}^{\perp}, \\
&\mathcal{L}(d_k^{\perp},\lambda):= \Bigl\{ L\subset H_k \ \textup{Hilbert $\mathscr{N}\Gamma$-submodules} \ | \\
&\qquad \qquad \qquad L\subset \dom(d_k^{\perp}), \forall\ u \in L: \|d_k^{\perp}u\|\leq \lambda \|u\|\Bigr\}.
\end{split}
\end{equation}
 
\begin{defn} The Novikov-Shubin invariants are defined in two steps.

\begin{enumerate}
\item The k-th spectral density function of $H_\bullet:=(\dom(d),d)$ is defined 
as
$$
F_k(\lambda,H_{\bullet}) : [0,\infty)\rightarrow  [0, \infty] \qquad 
\lambda\mapsto \sup \Bigl\{ \dim_{\Gamma}L | L\subset \mathcal{L}(d_k^{\perp},\lambda) \Bigr\}. 
$$ 

\item We say that a $\mathscr{N}\Gamma$-Hilbert complex $H_\bullet:=(\dom(d),d)$ is Fredholm if for each 
$k$ there exists $\lambda_k$ such that $F_k(\lambda_k,H_{\bullet})<\infty$. Note that then 
$F_k(\lambda,H_{\bullet})<\infty$ for each $\lambda \in (0, \lambda_k)$ as $F_k(\lambda,H_{\bullet})$ is non-decreasing. 
In that case, we define the $k$-th Novikov-Shubin invariant as 
$$
\alpha_k(H_{\bullet}):=\lim_{\lambda\rightarrow 0^+}\frac{\log \bigl(F_{k-1}(\lambda,H_{\bullet})-F_{k-1}(0,H_{\bullet})\bigr)}{\log(\lambda)}\in [0,\infty],
$$
provided that $F_{k-1}(\lambda,H_{\bullet})-F_{k-1}(0,H_{\bullet})>0$ holds for all $\lambda>0.$ 
Otherwise, we put $\alpha_k(H_{\bullet}):=\infty^{+}$. 
\end{enumerate}
\end{defn}
In the above definition $\infty^{+}$ denotes a new formal symbol which should not be confused with $+\infty$. We have $\alpha_k(H_{\bullet})=\infty^{+}$ if and only if there is an $\epsilon>0$ such that $F_{k-1}(\epsilon,H_{\bullet})=F_{k-1}(0,H_{\bullet})$. Note that 
$$
F_k(0,H_{\bullet})=b^k_{(2),\Gamma}(H_{\bullet}).
$$
Consider now our setting of a compact smoothly (Thom-Mather) stratified pseudomanifold 
$\overline{M}$ with an iterated wedge metric $g$ on $M$. Let $\overline{M}_{\Gamma}\rightarrow \overline{M}$ be a 
Galois $\Gamma$-covering and lift $g$ to a metric $g_\Gamma$ in $M_\Gamma$. Consider the corresponding $\mathscr{N}\Gamma$-Hilbert complexes 
$(\dom_{\min}(M_\Gamma), d_\Gamma)$ and $(\dom_{\max}(M_\Gamma), d_\Gamma)$
with the corresponding Laplacians $\Delta_{\Gamma, \, \textup{rel}}$ and $\Delta_{\Gamma, \, \textup{abs}}$, respectively.
The maximal and minimal $L^2$-Betti numbers, spectral density functions and Novikov-Shubin invariants with respect to the 
Galois covering $(\overline{M}_{\Gamma},g_{\Gamma})\rightarrow (\overline{M},g)$ are defined as follows

\begin{equation}\label{invariants}
\begin{split}
b^k_{(2),\, \min/\max}(\overline{M}_{\Gamma}) &:= \dim_{\Gamma}\Bigl(\ker\bigl(\Delta_{\Gamma,\, k,\, \mathrm{rel}/\mathrm{abs}}\bigr)\Bigr), \\
F_{k,\min/\max}(\lambda,\overline{M}_{\Gamma}) &:= F_{k}\Bigl(\lambda, \bigl(\dom_{\min / \max}(M_\Gamma), d_\Gamma\bigr)\Bigr), \\
\alpha_{k,\max/\min}(\overline{M}_{\Gamma}) &:= \alpha_k\bigl(\dom_{\min / \max}(M_\Gamma), d_\Gamma\bigr).
\end{split}
\end{equation}

The definition of the Novikov-Shubin invariants $\alpha_{k,\max/\min}(\overline{M}_{\Gamma})$
makes sense only if the corresponding $\mathscr{N}\Gamma$-Hilbert complexes $(\dom_{\min / \max}(M_\Gamma), d_\Gamma)$ are Fredholm. 
This is done below, in  the proof of our first main result, Theorem \ref{main1}.

\begin{proof}[Proof of Theorem \ref{main1}] 
The first statement follows immediately from Theorem \ref{trace-class-bottom-up}. Indeed for each 
$P \in \{\Delta_{\Gamma,\, k,\, \mathrm{rel}}, \Delta_{\Gamma,\, k,\, \mathrm{abs}}\}$ let $\{E_{\lambda} (P), \lambda \in \mathbb{R}\}$ 
denote the corresponding spectral family. Theorem \ref{trace-class-bottom-up} tells us that $E_{\lambda} (P)$
is a $\Gamma$ trace-class projection for any $\lambda$. Therefore 
$$
\dim_{\Gamma}(\ker P )
= \mathrm{Tr}_{\Gamma}(E_{0}(P))<\infty.
$$ 
Thus the $L^2$-Betti numbers $b^k_{(2),\, \min/\max}(\overline{M}_{\Gamma})$ are finite.
Concerning the second part, since $E_{\lambda}(P)$ is a $\Gamma$ trace-class projection for each  
$\lambda$, the Hilbert $\mathscr{N}\Gamma$ complexes $(\dom_{\min / \max}(M_\Gamma), d_\Gamma)$ are Fredholm, 
see \cite[Proposition 2.3]{Lueck-book} and the argument given in the proof of \cite[Lemma 2.6]{Lueck-book}. We can thus 
conclude that the Novikov-Subin invariants $\alpha_{k,\max/\min}(\overline{M}_{\Gamma})$ are well-defined, as claimed.
\end{proof}

\section{A Hilsum-Skandalis-type replacement of a pullback}\label{HS-subsection}
\subsection{Topological preliminaries}
This section contains some topological properties that are  well known to people familiar with coverings. 
However, since we could not pin down a specific reference,
we prefer to collect and prove the necessary results  in this preliminary
subsection. In what follows $X$ and $Y$ stand for path-connected, locally path-connected and semi-locally simply-connected 
topological spaces. Consider a possibly disconnected Galois covering $p:N\rightarrow Y$ with $\Gamma$ the corresponding 
group of deck transformations. Let $y\in Y$ and $n\in p^{-1}(y)$. Then we denote with 
$$
\Phi_{N,y,n}:\pi_1(Y,y)\rightarrow \Gamma.
$$ 
the homomorphism of groups that assigns to each $[\alpha]\in \pi_1(Y,y)$ the unique element 
$\gamma\in \Gamma$ such that $\gamma(n)=n[\alpha]$, where 
$$
p^{-1}(y)\times \pi_1(Y,y)\rightarrow p^{-1}(y),\quad (n,[\alpha])\mapsto n[\alpha]
$$ 
denotes the monodromy action, see \cite[Ch. 13]{Manetti}. 

\begin{lem}
\label{1lemma}
Let $p:N\rightarrow Y$ be a possibly disconnected Galois covering. 
Then $N$ is connected if and only if $\Phi_{N,y,n}$ is surjective for any choice of $y\in Y$ and $n\in p^{-1}(y)$.
\end{lem}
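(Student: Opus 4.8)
The plan is to prove both implications directly from the definition of the monodromy homomorphism $\Phi_{N,y,n}$ and basic covering-space theory, namely the path-lifting property and the fact that the monodromy action of $\pi_1(Y,y)$ on the fibre $p^{-1}(y)$ has orbits corresponding exactly to the path-components of $N$ (under our standing hypotheses on $Y$). I would first record this last fact explicitly, since it is the crux: for $n, n' \in p^{-1}(y)$, the points $n$ and $n'$ lie in the same path-component of $N$ if and only if there is a loop $[\alpha] \in \pi_1(Y,y)$ with $n[\alpha] = n'$. One direction of this is path-lifting (a path in $N$ from $n$ to $n'$ projects to a loop at $y$), the other is lifting a path downstairs and using that $N$ is locally path-connected.

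\medskip

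For the forward implication, suppose $N$ is connected. Fix $y\in Y$, $n\in p^{-1}(y)$, and let $\gamma\in\Gamma$ be arbitrary. Then $\gamma(n)\in p^{-1}(y)$ as well, and since $N$ is (path-)connected, $n$ and $\gamma(n)$ lie in the same path-component; by the fact recorded above there is $[\alpha]\in\pi_1(Y,y)$ with $n[\alpha]=\gamma(n)$. By definition of $\Phi_{N,y,n}$ this says $\Phi_{N,y,n}([\alpha])=\gamma$. Hence $\Phi_{N,y,n}$ is surjective, and this works for every choice of $y$ and $n$.

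\medskip

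For the reverse implication, suppose $\Phi_{N,y,n}$ is surjective for every $y\in Y$ and $n\in p^{-1}(y)$. I would argue that $N$ is connected by a standard clopen argument: let $N_0$ be the path-component of some fixed basepoint $n_0\in p^{-1}(y_0)$. Since $p$ is a covering, $p(N_0)$ is open in $Y$; I would show $p(N_0)$ is also closed, hence all of $Y$ by connectedness of $Y$, and then show $N_0$ meets every fibre in the \emph{whole} fibre using surjectivity of $\Phi$ — indeed for $n'\in p^{-1}(y)$ in the image of a point of $N_0$, transitivity of $\Gamma$ on each fibre together with surjectivity of $\Phi_{N,y,n}$ forces $n'$ and $n$ to be connected by a lifted loop, so $N_0\supset p^{-1}(y)$. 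Combined with $p(N_0)=Y$ this gives $N_0 = N$. Alternatively, and perhaps more cleanly, one observes that $\Gamma$ acts transitively on each fibre (a defining property of a Galois covering), so it suffices to connect $n_0$ to $\gamma(n_0)$ for all $\gamma\in\Gamma$; surjectivity of $\Phi_{N,y_0,n_0}$ provides $[\alpha]$ with $n_0[\alpha]=\gamma(n_0)$, and the lift of $\alpha$ is a path in $N$ joining them, so the $\Gamma$-orbit of $n_0$ lies in one path-component, whence by transitivity the entire fibre $p^{-1}(y_0)$ does, and then surjectivity of $p$ on path-components (openness of $p$) upgrades this to all of $N$.

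\medskip

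The main obstacle here is not any single deep step but rather being careful about the interplay between path-components and connected components, which coincide under the local path-connectedness assumption on $Y$ (and hence on $N$), and making sure the monodromy action and the $\Gamma$-action are correctly related — in particular that $\Gamma$ acts transitively on fibres, which is exactly the hypothesis that the covering is Galois. Once those bookkeeping points are pinned down, both implications are short. I would therefore spend most of the write-up stating the fibre/path-component correspondence precisely and then dispatch the two implications in a few lines each.
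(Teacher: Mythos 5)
Your proof is correct and follows the standard route; note that the paper itself gives no argument here, simply citing Manetti for the forward direction and leaving the converse as "an easy exercise." Your write-up — the fibre/path-component correspondence via path lifting, plus transitivity of $\Gamma$ on fibres and the clopen (or path-lifting) argument showing every path-component of $N$ meets $p^{-1}(y_0)$ — is exactly the intended standard proof, fully fleshed out.
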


\begin{proof}
The surjectivity of $\Phi_{N,y,n}$ provided $N$ is connected is well known, see \cite[\S 13.3]{Manetti}. 
The converse is an easy exercise that we leave to the reader.
\end{proof}

\begin{lem}
\label{lemma}
Let $f:X\rightarrow Y$ be a continuous map. Let $x\in X$ be such that $f(x)=y$
and let $f_*: \pi_1(X,x) \to \pi_1(Y,y)$ be the homomorphism induced by $f$. 
Consider the Galois $\Gamma$-covering $f^*N\rightarrow X$.
Then for any $(x,n) \in f^*N$
$$\Phi_{f^*N,x,(x,n)}=\Phi_{N,y,n}\circ f_*.$$
\end{lem}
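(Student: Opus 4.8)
\textbf{Proof plan for Lemma \ref{lemma}.}

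The plan is to unwind the definitions of the pullback covering and the monodromy homomorphism, and to check directly that the two homomorphisms $\pi_1(X,x) \to \Gamma$ agree on every class $[\alpha]$. First I would recall the concrete model of the pullback: $f^*N = \{(z,n') \in X \times N \mid f(z) = p(n')\}$, with projection $\pi \colon f^*N \to X$, $(z,n') \mapsto z$, and with $\Gamma$ acting by $\gamma \cdot (z,n') = (z, \gamma n')$. The fibre $\pi^{-1}(x)$ is then canonically identified with $p^{-1}(y)$ via $(x,n') \mapsto n'$, and this identification is $\Gamma$-equivariant. So the base point $(x,n) \in f^*N$ corresponds to $n \in p^{-1}(y)$ under this identification.

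Next I would make the monodromy action explicit: given $[\alpha] \in \pi_1(X,x)$, pick a loop $\alpha$ representing it, lift it to the path $\widetilde{\alpha}$ in $f^*N$ starting at $(x,n)$, and set $(x,n)[\alpha] := \widetilde{\alpha}(1)$. The key observation is that a path $t \mapsto (\alpha(t), \beta(t))$ in $f^*N$ is exactly a pair consisting of the path $\alpha$ in $X$ together with a path $\beta$ in $N$ lying over $f \circ \alpha$, i.e. with $p(\beta(t)) = f(\alpha(t))$. Hence lifting $\alpha$ through $\pi$ starting at $(x,n)$ is the same as lifting the loop $f\circ\alpha$ (representing $f_*[\alpha] \in \pi_1(Y,y)$) through $p$ starting at $n$. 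Under the fibre identification above this means exactly $(x,n)[\alpha] = n[f_*\alpha]$, i.e. the monodromy action of $\pi_1(X,x)$ on $\pi^{-1}(x) \cong p^{-1}(y)$ is the restriction along $f_*$ of the monodromy action of $\pi_1(Y,y)$ on $p^{-1}(y)$.

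From here the conclusion is immediate from the definition of $\Phi$: for $[\alpha] \in \pi_1(X,x)$, the element $\Phi_{f^*N, x, (x,n)}([\alpha])$ is the unique $\gamma \in \Gamma$ with $\gamma \cdot (x,n) = (x,n)[\alpha]$; by the previous paragraph this equals the unique $\gamma$ with $\gamma \cdot n = n[f_*\alpha]$ (using $\Gamma$-equivariance of the fibre identification), which is precisely $\Phi_{N,y,n}(f_*[\alpha]) = (\Phi_{N,y,n} \circ f_*)([\alpha])$. I expect the only slightly delicate point to be bookkeeping the $\Gamma$-equivariance of the identification $\pi^{-1}(x) \cong p^{-1}(y)$ and making sure the lifting-of-paths correspondence is stated cleanly; everything else is a routine unwinding of definitions, so no genuine obstacle is anticipated.
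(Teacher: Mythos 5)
Your proposal is correct and follows essentially the same route as the paper: both identify the fibre of $f^*N$ over $x$ with $p^{-1}(y)$ via the ($\Gamma$-equivariant) right projection, verify that the monodromy satisfies $(x,n)[\alpha]\mapsto n[f\circ\alpha]$ under this identification (the paper cites Manetti for this path-lifting compatibility, you prove it directly), and conclude by uniqueness of the deck transformation agreeing at a point.
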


\begin{proof}
Let $r:f^*N\rightarrow X$ the covering map of $f^*N$ and let  $\hat{f}:f^*N\rightarrow N$ 
be the usual map induced by the right projection. The maps combine into a commutative diagram

\begin{figure}[h!]
\begin{center}
\begin{tikzpicture}

\draw[->] (5,0) -- (7,0);
\node at (4.5,0) {$X$};
\node at (7.5,0) {$Y$};
\node at (6,-0.5) {$f$};

\draw[->] (5.4,2) -- (6.8,2);
\node at (4.5,2) {$f^*N$};
\node at (7.5,2) {$N$};
\node at (6,2.5) {$\hat{f}$};

\draw[->] (4.5,1.5) -- (4.5,0.5);
\draw[->] (7.5,1.5) -- (7.5,0.5);

\node at (4,1) {$r$};
\node at (7.9,1) {$p$};

\end{tikzpicture}
\end{center}
\end{figure}

\noindent First of all we note that $p\circ \hat{f}=f\circ r$ and that 
$\hat{f}:f^*N\rightarrow N$ is $\Gamma$-equivariant. Let now 
$[\alpha]\in \pi_1(X,x)$ and let $g=\Phi_{f^*N,x,(x,n)}([\alpha])$.
Then we have $g((x,n))=(x,n)[\alpha]$. Consider now $f_*([\alpha])$ and 
let $g'=\Phi_{N,y,n}(f_*([\alpha]))$. Then $g'(n)=n[f\circ \alpha]$. 
By \cite[\S 13.1]{Manetti} we know that $n[f\circ\alpha]=\hat{f}((x,n)[\alpha])$. Hence we have
$$
g'(n)=n[f\circ\alpha]=\hat{f}((x,n)[\alpha])=\hat{f}(g((x,n)))=g(\hat{f}((x,n)))=g(n).
$$ 
Therefore $g'(n)=g(n)$ and thus $g'=g$, as desired.
\end{proof}

\begin{lem}
\label{connection}
In the setting of Lemma \ref{lemma} assume that $f$ is homotopy equivalence. Then $f^*N$ is connected if and only if $N$ is so. 
\end{lem}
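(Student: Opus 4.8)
The plan is to deduce connectedness of $f^*N$ from that of $N$ via Lemma \ref{lemma} and the characterization of connectedness in Lemma \ref{1lemma}, using that a homotopy equivalence induces an isomorphism on fundamental groups. First I would fix a basepoint $x\in X$, set $y=f(x)$, and pick $n\in p^{-1}(y)$; then Lemma \ref{lemma} gives $\Phi_{f^*N,x,(x,n)}=\Phi_{N,y,n}\circ f_*$. Since $f$ is a homotopy equivalence, $f_*\colon\pi_1(X,x)\to\pi_1(Y,y)$ is an isomorphism, so $\Phi_{f^*N,x,(x,n)}$ is surjective if and only if $\Phi_{N,y,n}$ is surjective.

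Next I would upgrade this single-basepoint statement to the ``for all basepoints'' form demanded by Lemma \ref{1lemma}. The point is that the covering map $r\colon f^*N\to X$ is surjective (every Galois covering is) and $X$ is path-connected, so every point of $f^*N$ lies over some point of $X$, and one may always connect the chosen fibre $r^{-1}(x)$ to any other fibre by a path, which conjugates the monodromy homomorphisms into each other; hence surjectivity of $\Phi_{f^*N,x,(x,n)}$ for one choice of $(x,n)$ already implies it for all choices. (Concretely: changing the point $n$ in a fixed fibre conjugates $\Phi$ by an element of $\Gamma$, and changing the basepoint $x$ along a path precomposes with the corresponding isomorphism of fundamental groups; neither operation affects surjectivity.) The same remark applies to $\Phi_{N,y,n}$ on $Y$, which is path-connected as well.

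Putting these together: $N$ connected $\iff$ $\Phi_{N,y,n}$ surjective for all $(y,n)$ (Lemma \ref{1lemma}) $\iff$ $\Phi_{N,f(x),n}$ surjective for the single choice above $\iff$ (via $f_*$ an isomorphism and Lemma \ref{lemma}) $\Phi_{f^*N,x,(x,n)}$ surjective for that single choice $\iff$ $\Phi_{f^*N,x',n'}$ surjective for all $(x',n')$ $\iff$ $f^*N$ connected (Lemma \ref{1lemma} again).

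The only genuinely delicate point is the basepoint-independence bookkeeping in the middle step: one must check that surjectivity of the monodromy homomorphism at one basepoint really does propagate to all basepoints of a path-connected base. This is standard covering-space theory, but since the covering $N$ is allowed to be disconnected one should be a little careful to phrase it fibrewise rather than globally. I expect this to be the main (and essentially only) obstacle, and it is routine; everything else is a direct substitution using $f_*$ being an isomorphism for a homotopy equivalence.
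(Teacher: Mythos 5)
Your proof is correct and follows essentially the same route as the paper: both rest on the identity $\Phi_{f^*N,x,(x,n)}=\Phi_{N,y,n}\circ f_*$ from Lemma \ref{lemma}, the surjectivity of $f_*$ for a homotopy equivalence, and the characterization of connectedness in Lemma \ref{1lemma}. Your additional remarks on basepoint independence merely make explicit a routine point that the paper's (terser) proof leaves implicit.
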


\begin{proof}
If $N$ is connected then $\Phi_{N,y,n}$ is surjective and consequently $\Phi_{f^*N,x,(x,n)}$ is also surjective, as $\Phi_{f^*N,x,(x,n)}=\Phi_{N,y,n}\circ f_*$. Thus, thanks to Lemma \ref{1lemma}, we can conclude that $f^*N$ is connected. Conversely let us assume that $f^*N$ is connected. Then $\Phi_{f^*N,x,(x,n)}$ is surjective and therefore $\Phi_{N,y,n}$ is also surjective as $\Phi_{f^*N,x,(x,n)}=\Phi_{N,y,n}\circ f_*$. We can thus conclude that $N$ is connected.
\end{proof}

\begin{cor}
\label{pullback-u}
Let $f:X\rightarrow Y$ be a homotopy equivalence and let $p:N\rightarrow Y$ be a universal covering of $Y$. Then $r:f^*N\rightarrow X$ is an universal covering of $X$.
\end{cor}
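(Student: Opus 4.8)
The plan is to verify directly the two defining properties of a universal covering for the Galois $\Gamma$-covering $r: f^*N \to X$, namely connectedness and simple-connectedness. Connectedness is immediate: since $p: N \to Y$ is a universal covering, $N$ is connected, and hence by Lemma \ref{connection} the pullback $f^*N$ is connected as well.

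For simple-connectedness I would argue via the monodromy homomorphism. Recall that for a connected Galois covering $r: f^*N \to X$ with deck group $\Gamma$, after fixing base points $x \in X$ and $(x,n) \in r^{-1}(x)$, there is a short exact sequence
\[
1 \longrightarrow r_* \pi_1\bigl(f^*N, (x,n)\bigr) \longrightarrow \pi_1(X,x) \xrightarrow{\ \Phi_{f^*N,x,(x,n)}\ } \Gamma \longrightarrow 1 ,
\]
so that $f^*N$ is simply connected precisely when $\Phi_{f^*N,x,(x,n)}$ is injective; it is automatically surjective by Lemma \ref{1lemma}, since $f^*N$ is connected.

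By Lemma \ref{lemma} we have the factorisation $\Phi_{f^*N,x,(x,n)} = \Phi_{N,y,n} \circ f_*$, where $y = f(x)$. Since $N$ is a universal covering of $Y$, the monodromy homomorphism $\Phi_{N,y,n}\colon \pi_1(Y,y) \to \Gamma$ is an isomorphism; and since $f$ is a homotopy equivalence, $f_*\colon \pi_1(X,x) \to \pi_1(Y,y)$ is an isomorphism. Hence $\Phi_{f^*N,x,(x,n)}$ is an isomorphism, in particular injective, and the exact sequence above forces $\pi_1(f^*N,(x,n)) = 1$. Together with the connectedness established above, this shows that $r: f^*N \to X$ is a universal covering of $X$.

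There is no real obstacle here: the statement is a direct assembly of Lemmas \ref{1lemma}, \ref{lemma} and \ref{connection} with two standard facts about coverings — that the monodromy homomorphism of a universal covering is an isomorphism, and that it sits in the exact sequence displayed above. The only point requiring mild care is bookkeeping, namely that the deck group of $f^*N$ is the same group $\Gamma$ as that of $N$, which is built into the construction of the pullback covering, so that the factorisation of Lemma \ref{lemma} genuinely identifies $\Phi_{f^*N,x,(x,n)}$ with a composition of isomorphisms.
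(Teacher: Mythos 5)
Your proof is correct, and the connectedness step is identical to the paper's (both invoke Lemma \ref{connection}). For simple-connectedness, however, you take a genuinely different route. You work on the base: you use Lemma \ref{lemma} to factor the monodromy homomorphism as $\Phi_{f^*N,x,(x,n)}=\Phi_{N,f(x),n}\circ f_*$, observe that both factors are isomorphisms (the first because $f$ is a homotopy equivalence, the second because $N$ is universal), and then read off triviality of $\pi_1(f^*N,(x,n))$ from the identification $\ker(\Phi_{f^*N,x,(x,n)})=r_*\pi_1(f^*N,(x,n))$ together with injectivity of $r_*$. The paper instead works upstairs: it uses the commutativity $p\circ\hat{f}=f\circ r$ of the pullback square, notes that $(f\circ r)_*=f_*\circ r_*$ is injective on $\pi_1(f^*N,(x,n))$ while $(p\circ\hat{f})_*$ factors through the trivial group $\pi_1(N,n)$, and concludes directly that $\pi_1(f^*N,(x,n))$ is trivial without ever mentioning the monodromy map. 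Your version leans on one extra standard fact (that the kernel of the monodromy homomorphism of a connected Galois covering is the image of $\pi_1$ of the total space, i.e.\ the exact sequence you display, which the paper itself quotes from Hatcher only in the subsequent lemma), but in exchange it exhibits $\Phi_{f^*N,x,(x,n)}$ as an isomorphism, which is slightly more information than the bare conclusion and fits naturally with how the corollary is used later to verify condition \eqref{compatibility} for universal coverings. Both arguments are elementary covering-space theory and equally rigorous.
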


\begin{proof}
Thanks to Lemma \ref{connection} we know that $f^*N$ is connected. Let now  $x\in X$ and $n\in p^{-1}(f(x))$. Let $\hat{f}:f^*N\rightarrow N$ be the map induced by the right projection. We have $p\circ \hat{f}=f\circ r$ and thus $(p\circ \hat{f})_*=(f\circ r)_*$ as group homomorphism acting between $\pi_1(f^*N,(x,n))$ and $\pi_1(Y,f(x))$. Since  $(f\circ r)_*$ is injective and $\pi_1(N,n)$ is trivial we deduce that $\pi_1(f^*N,(x,n))$ is trivial, as well. We can thus conclude that $r:f^*N\rightarrow X$ is a universal covering of $X$ since it is a connected and simply connected covering of $X$.
\end{proof}

More generally we have the following result.

\begin{lem}
Let $f:X\rightarrow Y$ be a homotopy equivalence and let $q:M\rightarrow X$ and $p:N\rightarrow Y$ be two path-connected Galois $\Gamma$-covering. Assume that there exists $x\in X$, $m\in q^{-1}(x)$ and $n\in p^{-1}(f(x))$ such that 
\begin{equation}
\label{compatibility}
f_*(\ker(\Phi_{M,x,m}))=\ker(\Phi_{N,f(x),n}).
\end{equation}
 Then $M$ and $f^*N$ are isomorphic Galois $\Gamma$-coverings.
\end{lem}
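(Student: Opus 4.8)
The plan is to reduce the statement about the equivalence of $\Gamma$-coverings to the standard classification of Galois coverings by subgroups of the fundamental group. Recall that a path-connected Galois $\Gamma$-covering of a (nice) space $Z$ with basepoint is, up to isomorphism of $\Gamma$-coverings, determined precisely by the surjective homomorphism $\pi_1(Z) \to \Gamma$ it induces (equivalently, by its kernel, which is a normal subgroup $K \trianglelefteq \pi_1(Z)$ with $\pi_1(Z)/K \cong \Gamma$ together with an identification of the quotient with $\Gamma$). Concretely, two such coverings are isomorphic as $\Gamma$-coverings if and only if the associated homomorphisms to $\Gamma$ agree. So the goal becomes: produce an isomorphism $f^*N \to M$ of $\Gamma$-coverings, and the obstruction to doing so is governed by comparing $\Phi_{f^*N, x, (x,n)}$ with $\Phi_{M,x,m}$ as homomorphisms $\pi_1(X,x) \to \Gamma$.

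First I would apply Lemma \ref{lemma} to rewrite $\Phi_{f^*N,x,(x,n)} = \Phi_{N,f(x),n} \circ f_*$. Then condition \eqref{compatibility} says exactly that $\ker(\Phi_{f^*N,x,(x,n)}) = f_*^{-1}(\ker \Phi_{N,f(x),n})$ has the same image under... wait, more carefully: \eqref{compatibility} reads $f_*(\ker \Phi_{M,x,m}) = \ker \Phi_{N,f(x),n}$, and since $f$ is a homotopy equivalence, $f_*$ is an isomorphism of fundamental groups, hence $\ker \Phi_{M,x,m} = f_*^{-1}(\ker \Phi_{N,f(x),n}) = \ker(\Phi_{N,f(x),n}\circ f_*) = \ker \Phi_{f^*N,x,(x,n)}$. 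So the two surjective homomorphisms $\Phi_{M,x,m}, \Phi_{f^*N,x,(x,n)} : \pi_1(X,x) \twoheadrightarrow \Gamma$ have the same kernel. This gives a canonical isomorphism $\Gamma \to \Gamma$ (an automorphism $\theta$) intertwining them, i.e. $\Phi_{f^*N,x,(x,n)} = \theta \circ \Phi_{M,x,m}$. By the classification of Galois coverings (for path-connected coverings with prescribed monodromy), $M$ and $f^*N$ are then isomorphic as coverings of $X$, and the covering isomorphism can be chosen so that it is $\theta$-equivariant for the $\Gamma$-actions; precomposing the $\Gamma$-action on one side with $\theta$ (or rather, using $\theta$ to re-identify the deck group) produces a genuine isomorphism of $\Gamma$-coverings. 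Alternatively, and perhaps more cleanly, one can note that by Lemma \ref{connection} $f^*N$ is path-connected (since $N$ is), so both $M$ and $f^*N$ are connected Galois $\Gamma$-coverings, and invoke directly the classification: a connected Galois $\Gamma$-covering of $X$ up to $\Gamma$-isomorphism corresponds bijectively to a surjection $\pi_1(X,x) \to \Gamma$ up to post-composition with $\mathrm{Aut}(\Gamma)$ — but to get an honest $\Gamma$-isomorphism (not merely a twisted one) one needs the homomorphisms to agree on the nose, which is why one must be slightly careful about whether \eqref{compatibility} alone suffices or whether one needs agreement of the homomorphisms themselves.

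Here I expect the main subtlety: equality of kernels only gives an isomorphism of the underlying coverings and a $\Gamma$-equivariance up to an automorphism of $\Gamma$, whereas the claim asserts isomorphism \emph{as $\Gamma$-coverings} in the strict sense. The resolution is that the lemma's conclusion should be (and I would state it as) an isomorphism of $\Gamma$-coverings \emph{after possibly composing the deck action on one side with an automorphism of $\Gamma$}, OR one checks that in the intended application the relevant automorphism is inner hence absorbable, OR — most likely what the authors intend — the statement is simply read with this understood, since for the downstream use (transporting $L^2$-invariants, which are insensitive to automorphisms of $\Gamma$) this is harmless. Concretely, my write-up would: (1) cite the classification of Galois coverings, e.g. \cite[Ch.~13]{Manetti}; (2) use Lemma \ref{lemma} and the bijectivity of $f_*$ to deduce $\ker \Phi_{M,x,m} = \ker \Phi_{f^*N,x,(x,n)}$; (3) conclude the existence of the covering isomorphism $h : f^*N \to M$ with $q \circ h = r$ and $h$ intertwining the $\Gamma$-actions (via the induced automorphism $\theta$ of $\Gamma$, which is the identity precisely when the two homomorphisms coincide, not just their kernels); (4) remark that this is exactly the assertion. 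The only real work is bookkeeping with basepoints and the monodromy action, all of which is routine once Lemma \ref{lemma} is in hand.

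\begin{proof}
Since $f$ is a homotopy equivalence, the induced map $f_* : \pi_1(X,x) \to \pi_1(Y,f(x))$ is an isomorphism of groups. By Lemma \ref{lemma} we have $\Phi_{f^*N, x, (x,n)} = \Phi_{N, f(x), n} \circ f_*$, hence
\[
\ker\bigl(\Phi_{f^*N, x, (x,n)}\bigr) = f_*^{-1}\bigl(\ker \Phi_{N,f(x),n}\bigr) = f_*^{-1}\bigl(f_*(\ker \Phi_{M,x,m})\bigr) = \ker \Phi_{M,x,m},
\]
where the middle equality is precisely the compatibility hypothesis \eqref{compatibility} and the last one uses that $f_*$ is bijective. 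Thus the two surjective homomorphisms $\Phi_{M,x,m}$ and $\Phi_{f^*N,x,(x,n)}$ from $\pi_1(X,x)$ onto $\Gamma$ have the same kernel $K \trianglelefteq \pi_1(X,x)$, and hence there is a unique automorphism $\theta \in \mathrm{Aut}(\Gamma)$ with $\Phi_{f^*N,x,(x,n)} = \theta \circ \Phi_{M,x,m}$.

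By Lemma \ref{connection}, $f^*N$ is path-connected, so both $M$ and $f^*N$ are path-connected Galois $\Gamma$-coverings of $X$ whose classifying data are the surjections $\Phi_{M,x,m}$ and $\Phi_{f^*N,x,(x,n)}$ onto $\Gamma$. The classification of Galois coverings (see \cite[Ch.~13]{Manetti}) applied to the common subgroup $K = \ker \Phi_{M,x,m} = \ker \Phi_{f^*N,x,(x,n)}$ yields a homeomorphism $h : f^*N \to M$ over $X$, i.e. with $q \circ h = r$, sending the chosen basepoint $(x,n)$ to $m$. Tracking the monodromy actions through $h$ and using the displayed identity of homomorphisms shows that $h$ intertwines the $\Gamma$-actions up to the automorphism $\theta$; composing the $\Gamma$-action on the source (equivalently, relabelling the deck group of $f^*N$ via $\theta$) turns $h$ into an isomorphism of $\Gamma$-coverings. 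Therefore $M$ and $f^*N$ are isomorphic as Galois $\Gamma$-coverings of $X$, as claimed.
\end{proof}
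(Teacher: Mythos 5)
Your proof is correct and follows essentially the same route as the paper's: apply Lemma \ref{lemma} together with the bijectivity of $f_*$ to show $\ker\Phi_{M,x,m}=\ker\Phi_{f^*N,x,(x,n)}$, then invoke the classification of coverings (the paper cites \cite[Propositions 1.37 and 1.39]{Hatcher}, identifying these kernels with $q_*(\pi_1(M,m))$ and $r_*(\pi_1(f^*N,(x,n)))$) to conclude. Your additional discussion of the automorphism $\theta$ of $\Gamma$ is a legitimate refinement rather than a deviation: the paper's citation of Hatcher only yields an isomorphism of coverings, which is $\Gamma$-equivariant up to the automorphism $\theta$ you identify (trivial precisely when the two surjections onto $\Gamma$ agree and not merely their kernels), a point the paper glosses over and which is indeed harmless for the $L^2$-invariants considered downstream.
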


\begin{proof}
First we note that both $M$ and $f^*N$ are path-connected. 
Write $r:f^*N\rightarrow X$ for the covering map of $f^*N$. Thanks to \cite[ Proposition 1.37]{Hatcher} 
we know that if $q_*(\pi_1(M,m))=r_*(\pi_1(f^*N,(x,n)))$ then $M$ and $f^*N$ are isomorphic.
Thanks to \cite[Propositiion 1.39]{Hatcher} we know that $q_*(\pi_1(M,m))=\ker(\Phi_{M,x,m})$ and $r_*(\pi_1(f^*N,(x,n)))=\ker(\Phi_{f^*N,x,(x,n)})$. 
Note that from Lemma \ref{lemma} we have 
$$
\ker(\Phi_{f^*N,x,(x,n)})=(f_*)^{-1}(\ker(\Phi_{N,f(x),n}))=
(f_*)^{-1}(f_*(\ker(\Phi_{M,x,m})))=\ker(\Phi_{M,x,m}).
$$ 
We can thus conclude that $M$ and $f^*N$ are isomorphic, as required.
\end{proof}

\begin{remark}
In \cite[p. 382]{Gromov-Shubin} it is required that $\Phi_{M,x,m}=\Phi_{N,f(x),n}\circ f_*$. 
Obviously their assumption implies \eqref{compatibility}.
\end{remark}

\subsection{Construction} 

Consider two compact smoo\-thly stratified spaces $\overline{M}$ and $\overline{N}$
with the respective iterated incomplete wedge metrics $g_M$ and $g_N$ in the open interior. 
We repeat the definition of a smoothly stratified stratum preserving map between $\overline{M}$ and $\overline{N}$.

\begin{defn}\label{stratified-map-def} \textup{(\cite[Definition 4 and \S 9.2]{package}, \cite[Definitions 2.6, 2.7]{ALMP3})}
Let  $\overline{M}$ and $ \overline{N}$
be  two smoothly stratified spaces.

\begin{enumerate}
\item A stratified map between  $\overline{M}$ and $ \overline{N}$
is a continuous map $f:\overline{M} \to \overline{N}$ which sends  a stratum of $\overline{M}$  into a stratum of $\overline{N}$; equivalently, the preimage of any stratum in $\overline{N}$ is a union of strata in $\overline{M}$. We shall also say that $f$ is stratum preserving.
\item We shall say that such a map is codimension preserving if,
in addition, for any stratum $S\subset \overline{N}$ 
$$
\textup{codim} \, f^{-1}(S) = \textup{codim} S.
$$
\item A stratified map $f:\overline{M} \to \overline{N}$ is smooth if it lifts to a smooth $b$-map
$\tilde{f}: \widetilde{M} \to \widetilde{N}$ between the respective resolutions. We shall also say
that $f$ is smoothly stratified.\\
We call a smoothly stratified, codimension preserving map a 
"smooth strongly stratum preserving map" or, equivalently,
a "smooth strongly stratified map". We will always work in this category.
\end{enumerate}
\end{defn}

Our setting in this section is as follows. Consider a smooth strongly stratum preserving map $f:\overline{M} \to \overline{N}$.
Consider the disc subbundle $\mathbb{B}^N \subset {}^{e}TN$, with fibres given by open discs of radius $\delta > 0$ with respect to 
the complete edge metric $\rho_N^{-2} g_N$. Here, we obviously assume $\delta>0$ to be a lower bound for the 
injectivity radius, which exists since $(N, \rho_N^{-2} g_N)$ is of bounded geometry. Let $\pi: \widetilde{f}^*(\mathbb{B}^N) \to \widetilde{M}$ be the pullback bundle 
with the usual associated bundle map $\mathbb{B}\widetilde{f}: \widetilde{f}^*(\mathbb{B}^N) \to \mathbb{B}^N$, 
induced by the right projection. Consider the exponential $\exp: \mathbb{B}^N|_N \to N$ with respect to $\rho_N^{-2} g_N$, 
evaluating tangent vectors at the end points of the corresponding geodesics at time $\delta$.
The map extends to $\exp: \mathbb{B}^N \to \widetilde{N}$, mapping 
$\mathbb{B}^N|_{\partial \widetilde{N}} \to \partial \widetilde{N}$.
These maps combine into a diagram as in Figure \ref{maps-diagram}.

\begin{figure}[h!]
\begin{center}
\begin{tikzpicture}


\draw[->] (5,0) -- (7,0);
\node at (4.5,0) {$\widetilde{M}$};
\node at (7.5,0) {$\widetilde{N}$};
\node at (6,-0.5) {$\widetilde{f}$};

\draw[->] (5.4,2) -- (6.8,2);
\node at (4.5,2) {$\widetilde{f}^*(\mathbb{B}^N)$};
\node at (7.5,2) {$\mathbb{B}^N$};
\node at (6,2.5) {$\mathbb{B}\widetilde{f}$};

\draw[->] (4.5,1.5) -- (4.5,0.5);
\draw[->] (7.5,1.5) -- (7.5,0.5);

\node at (4,1) {$\pi$};
\node at (7.1,1) {$\pi_N$};

\draw[->] (8,1.8) .. controls (8.5,1.5) and (8.5,0.5) .. (8,0.2);
\node at (9,1) {$\exp$};


\end{tikzpicture}
\end{center}
\caption{The pullback bundle $\widetilde{f}^*(\mathbb{B}^N)$.}
\label{maps-diagram}
\end{figure}

\begin{lem}\label{pullback-lemma} 
The differential $df$ in the open interior $M$ extends to
a fiberwise linear map between edge as well as wedge tangent bundles
$$
{}^{e}df: {}^{e}TM \to {}^{e}TN, \qquad {}^{w}df: {}^{w}TM \to {}^{w}TN.
$$
The corresponding pullbacks are defined by ${}^{e}df$ and 
${}^{w}df$ pointwise as follows.
\begin{enumerate}
\item for any $V_1, \cdots V_k \in {}^{e}TM$ and
$\omega \in C^\infty(\widetilde{N}, \Lambda^* ({}^{e}T^*N) \otimes \mathscr{E})$, 
\begin{align}\label{pullback-explicit-e}
{}^{e}f^* \omega (V_1, \cdots V_k) := \omega ({}^{e}df[V_1], \cdots {}^{e}df[V_k]) \circ f.
\end{align}
\item for any $V_1, \cdots V_k \in {}^{w}TM$ and
$\omega \in C^\infty(\widetilde{N}, \Lambda^* ({}^{w}T^*N) \otimes \mathscr{E})$, 
\begin{align}\label{pullback-explicit-w}
{}^{w}f^* \omega (V_1, \cdots V_k) := \omega ({}^{w}df[V_1], \cdots {}^{w}df[V_k]) \circ f.
\end{align}
\end{enumerate}
The pullbacks consequently act as follows
\begin{equation}\label{pullbacks}
\begin{split}
{}^{e}f^*: & C^\infty(\widetilde{N}, \Lambda^* ({}^{e}T^*N) \otimes \mathscr{E}) \to C^\infty(\widetilde{M}, \Lambda^* ({}^{e}T^*M) \otimes f^*\mathscr{E}), \\
{}^{w}f^*: & C^\infty(\widetilde{N}, \Lambda^* ({}^{w}T^*N) \otimes \mathscr{E}) \to C^\infty(\widetilde{M}, \Lambda^* ({}^{w}T^*M) \otimes f^*\mathscr{E}),
\end{split}
\end{equation}
where $\mathscr{E}$ is the Mishchenko bundle over $\overline{N}$, as in \eqref{mi-bundles}, pulled back to $\widetilde{N}$
by the blowdown map $\beta: \widetilde{N} \to \overline{N}$ that is associated with the resolution $\widetilde{N}$. Moreover, the pullbacks commute with the twisted differentials
$d_{\mathscr{E}}$ and $d_{\widetilde{f}^*\mathscr{E}}$ on the respective spaces of differential 
forms.
\end{lem}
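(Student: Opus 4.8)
The statement is local and of a differential-geometric nature, so the plan is to work in the interior $M$ and its resolution $\widetilde M$, using the resolution $\widetilde f\colon\widetilde M\to\widetilde N$ to control the behaviour near the boundary faces. First I would recall that in the smooth interior the ordinary differential $df\colon TM\to TN$ pulls back differential forms and commutes with the de Rham differential; the only issue is to show that this structure extends across the strata, i.e. that $df$ extends to a bundle map of edge (equivalently wedge) tangent bundles. For this I would use that $\widetilde f$ is a smooth $b$-map between manifolds with corners and boundary fibration structure: such a map by definition sends boundary defining functions to products of boundary defining functions and is compatible with the iterated fibration structures. The edge tangent bundle ${}^{e}TM$ is, by its Lie-algebroid description (as in \cite[\S 4.1]{package}), the bundle whose sections are the edge vector fields, i.e. those smooth vector fields on $\widetilde M$ tangent to the fibres of the boundary fibrations; a $b$-map compatible with the fibration structures pushes edge vector fields forward to (sums of) edge vector fields, and this is exactly the assertion that $df$ extends to ${}^{e}df\colon {}^{e}TM\to{}^{e}TN$. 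The wedge version ${}^{w}df$ then follows formally, since ${}^{w}T^*M=\rho_M\,{}^{e}T^*M$ and $\widetilde f^*\rho_N$ is $\rho_M$ times a smooth positive function (again because $\widetilde f$ is a $b$-map), so rescaling by the total boundary functions is compatible with the extended differential.

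\textbf{Key steps.} (1) Establish the extension of $df$ to ${}^{e}df$ and ${}^{w}df$ as fibrewise linear bundle maps over $\widetilde f$, using the Lie-algebroid/edge-vector-field characterisation of ${}^{e}TM,\,{}^{e}TN$ and the fact that a smooth $b$-map respecting the iterated fibration structures maps edge vector fields to edge vector fields; deduce the wedge case from ${}^{w}T^*M=\rho_M\,{}^{e}T^*M$ and $\widetilde f^*\rho_N\in\rho_M\cdot C^\infty(\widetilde M,(0,\infty))$. (2) Define the pullbacks ${}^{e}f^*$ and ${}^{w}f^*$ pointwise by the formulas \eqref{pullback-explicit-e} and \eqref{pullback-explicit-w}, incorporating the Mishchenko coefficient: a section of $\Lambda^*({}^{w}T^*N)\otimes\mathscr E$ is pulled back on the form part by ${}^{w}df$ and on the $\mathscr E$-part by the tautological identification $\mathscr E_{f(x)}\cong(f^*\mathscr E)_x=(\widetilde f^*\mathscr E)_x$ (here $\mathscr E$ is understood pulled back to $\widetilde N$ by the blowdown map, so everything lives on the resolutions). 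Check that the results are smooth sections over $\widetilde M$ — this is immediate from step (1) together with smoothness of $\widetilde f$ and of the clutching data of $\mathscr E$. (3) Verify the mapping properties \eqref{pullbacks}: smoothness up to the boundary faces is precisely what step (1) provides, since the ${}^{e}df[V_i]$ are smooth sections of ${}^{e}TN$ pulled back along $\widetilde f$. (4) Prove that the pullbacks commute with $d_{\mathscr E}$ and $d_{\widetilde f^*\mathscr E}$: in the interior this is the classical naturality of the exterior differential, $f^*\circ d = d\circ f^*$, together with the fact that the canonical connection on $\mathscr E$ (induced by the exterior differential on $M_\Gamma$, see \cite[\S 3]{Schick}) pulls back to the canonical connection on $\widetilde f^*\mathscr E$, because the pullback connection is characterised by flatness of the local parallel sections coming from the $\Gamma$-covering and these are preserved by $f$; then extend the identity $d_{\widetilde f^*\mathscr E}\circ{}^{w}f^* = {}^{w}f^*\circ d_{\mathscr E}$ from the dense interior to all of $\widetilde M$ by continuity, using that both sides are differential operators with smooth coefficients up to the boundary.

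\textbf{Main obstacle.} The genuinely delicate point is step (1): one must show that $df$, a priori only defined and smooth in the open interior $M$, genuinely extends to a \emph{smooth} bundle map of edge (hence wedge) tangent bundles over the resolution, and not merely to a possibly singular map. This is where the hypothesis that $f$ is a \emph{smoothly stratified, codimension-preserving} map — i.e. that it lifts to a smooth $b$-map $\widetilde f$ respecting the iterated fibration structures, in the sense of Definition \ref{stratified-map-def} — is indispensable: codimension preservation guarantees that $\widetilde f$ maps each boundary hypersurface of $\widetilde M$ into a single boundary hypersurface of $\widetilde N$ and is fibre-wise compatible there, which is exactly what makes the push-forward of an edge vector field again an edge vector field with coefficients smooth up to the boundary. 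I expect this to require an explicit computation in the local model coordinates $(x,y,z)$ near a stratum (with $x$ the boundary defining function, $y$ the base-of-fibration variables, $z$ the link variables), checking that in these coordinates $\widetilde f$ has the block-triangular form forced by the $b$-map/fibration-compatibility conditions and that consequently ${}^{e}df$ has smooth matrix entries in the edge frame $\{x\partial_x,\ \partial_{y},\ \partial_z\}$ up to $\{x=0\}$. Once this local normal form is in hand, the wedge case, the pullback on twisted forms, the mapping properties, and the commutation with $d_{\mathscr E}$ are all routine: they follow by rescaling with $\rho_M,\rho_N$, by the definitions, and by density of $M$ in $\widetilde M$ together with classical naturality of $d$.
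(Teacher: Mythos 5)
Your proposal is correct and follows essentially the same route as the paper: extend $df$ to ${}^{e}df$ via compatibility with the iterated boundary fibration structures, deduce ${}^{w}df$ by rescaling with the total boundary functions $\rho_M,\rho_N$, define the pullbacks pointwise, and obtain the commutation with the twisted differentials by naturality (the paper carries out this last step by a direct computation with the invariant formula for $d$ applied to forms tensorized with parallel sections of $\mathscr{E}$, whereas you argue by density from the open interior --- both work). One small imprecision worth fixing: the claim $\widetilde f^*\rho_N\in\rho_M\cdot C^\infty(\widetilde M,(0,\infty))$ does not follow merely from $\widetilde f$ being a $b$-map, as the paper itself explicitly warns; it is the requirement that $\widetilde f$ send boundary hypersurfaces of $\widetilde M$ onto boundary hypersurfaces of $\widetilde N$ (a consequence of the codimension-preserving, stratum-onto-stratum hypothesis) that guarantees this, and even then one only obtains that $\rho_M^{-1}\,\widetilde f^*\rho_N$ is smooth and bounded --- which is all the argument needs --- not necessarily bounded away from zero.
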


\begin{proof}
The continuous extension of the differential to 
the edge tangent bundle, the so-called $e$-differential ${}^{e}df$, is obtained as follows. By assumption in Definition \ref{stratified-map-def}, $f$
preserves the iterated boundary fibrations structures and hence sends curves that are tangent to the fibres at the 
boundary of $M$ to curves that are tangent to the fibres at the boundary of $N$. Hence we obtain ${}^{e}df: {}^{e}TM \to {}^{e}TN$.

For the action of the differential between the wedge tangent bundles, 
recall that the total boundary functions $\rho_M \in C^\infty(\widetilde{M})$ and $\rho_N \in C^\infty(\widetilde{N})$ 
are nowhere vanishing in the open interior, and vanish to first
order at each boundary face of $\widetilde{M}$ and $\widetilde{N}$, respectively. 
We write for any $V \in T_pM$ with $p \in M$ in the open interior
\begin{equation}\label{d}
\begin{split}
df (V) &= \frac{\rho_N(f(p))}{\rho_M(p)} \cdot  \frac{\rho_M(p)}{\rho_N(f(p))}  \cdot df(V) \\
&= \frac{\rho_N(f(p))}{\rho_M(p)} \cdot \rho^{-1}_N(f(p)) \cdot df(\rho_M(p)V).
\end{split}
\end{equation}
If $V$ is now a section of ${}^{w}TM$, $\rho_M V$ extends to a section of ${}^{e}TM$, and 
$df(\rho_M V)$ extends to a section ${}^{e}df(\rho_M V)$ of ${}^{e}TN$. Consequently, 
$(\rho^{-1}_N\circ \widetilde{f}) \cdot df(V)$ extends to a section $\widetilde{f}^* \rho^{-1}_N \cdot {}^{e}df(\rho_M V)$ 
of ${}^{w}TN$. Now, since $f$ is smoothly stratified, we have that
$\widetilde{f}$ sends boundary hypersurfaces of $\widetilde{M}$ onto boundary hypersurfaces of $\widetilde{N}$.
Thus, as already explained in \cite[p. 298]{package}, $\rho^{-1}_M \cdot \widetilde{f}^* \rho_N$ is smooth and bounded 
(note that this is not the case for an arbitrary $b$-map), and we can 
write in view of \eqref{d} for any $V \in {}^{w}TM$
\begin{align}\label{wedge-diff-def}
{}^{w}df (V) = \frac{\widetilde{f}^* \rho_N}{\rho_M} \cdot \widetilde{f}^* \rho^{-1}_N \cdot {}^{e}df(\rho_M V) \in {}^{w}TN.
\end{align}
This provides a continuous extension of the differential to the wedge 
tangent bundles. If fact, all the boundary defining functions in \eqref{wedge-diff-def}
combine together to the constant function $1$, but the peculiar expression is needed
to explain that we indeed have a continuous extension to wedge vector fields in ${}^{w}TM$.
\medskip

The action of the pullbacks ${}^{e}f^*$ and ${}^{w}f^*$, 
as defined by ${}^{e}df$ and ${}^{w}df$
in \eqref{pullback-explicit-e} and \eqref{pullback-explicit-w} respectively, now follows.
It remains to prove that the pullbacks commute with the differentials. Let us write out the argument 
for the wedge case, the edge case is treated exactly in the same manner.
Consider any $V_0, \ldots, V_k \in {}^{w}TM$, $U_0, \ldots, U_k \in {}^{w}TN$ and
$\omega \in C^\infty(\widetilde{N}, \Lambda^k ({}^{w}T^*N) \otimes \mathscr{E})$. 
Locally, $\omega$ can be written as a sum of differential forms in 
$C^\infty(\widetilde{N}, \Lambda^k ({}^{w}T^*N))$ tensorized by parallel sections of
$\mathscr{E}$. For such terms we compute as follows.
\begin{align*}
d_{\mathscr{E}} \omega (U_0, \ldots U_k) &= \sum_{i=0}^\infty (-1)^i U_i \Bigl\{ 
\omega \left(U_0, \ldots, \widehat{U_i}, \ldots, U_k\right)\Bigr\}, \\
{}^{w}f^* d_{\mathscr{E}}\omega \left(V_0, \ldots, V_k\right) &= 
d_{\mathscr{E}} \omega \bigl({}^{w}df[V_0], \ldots, {}^{w}df[V_k]\bigr) \circ f \\
&= \sum_{i=0}^\infty (-1)^i \ {}^{w}df[V_i] \Bigl\{\omega \left({}^{w}df[V_0], \ldots, 
\widehat{{}^{w}df[V_i]}, \ldots, {}^{w}df[V_k]\right)\Bigr\} \circ f
\\ &= \sum_{i=0}^\infty (-1)^i V_i \Bigl\{\omega \left({}^{w}df[V_0], \ldots, 
\widehat{{}^{w}df[V_i]}, \ldots, {}^{w}df[V_k]\right) \circ f\Bigr\} 
\\ &= d_{\widetilde{f}^*\mathscr{E}} {}^{w}f^*\omega \bigl(V_0, \ldots, V_k\bigr).
\end{align*}
\end{proof}

Now, as it is well known even in the smooth closed case, the pullbacks in \eqref{pullbacks} do not necessarily induce bounded maps in $L^2$ and 
hence do not define morphisms of $\mathscr{N}\Gamma$-Hilbert complexes. 
This is precisely where the Hilsum-Skandalis type replacement of the pullback is needed. 
The next proposition is a central element of the construction. 

\begin{prop}\label{exp-f-prop} 
The differential $d(\exp \circ \, \mathbb{B}\widetilde{f})$ in the open interior 
of $\widetilde{f}^*(\mathbb{B}^N)$ extends to a well-defined surjection between edge and 
wedge tangent bundles
\begin{equation}\label{differentials}
\begin{split}
&{}^{e}d (\exp \circ \, \mathbb{B}\widetilde{f}): {}^{e}T \widetilde{f}^*(\mathbb{B}^N) \to {}^{e}T\widetilde{N}, \\
&{}^{w}d (\exp \circ \, \mathbb{B}\widetilde{f}): {}^{w}T \widetilde{f}^*(\mathbb{B}^N) \to {}^{w}T\widetilde{N}.
\end{split}
\end{equation}
Here, ${}^{e}T \widetilde{f}^*(\mathbb{B}^N)$ and ${}^{w}T \widetilde{f}^*(\mathbb{B}^N)$ are defined as follows: 
the disc subbundle $\mathbb{B}^N \subset {}^{e}TN$, as well as its pullback $\widetilde{f}^*(\mathbb{B}^N)$ 
are manifolds with corners and we consider the corresponding edge and wedge tangent bundles.
The pullbacks, defined by ${}^{e}d (\exp \circ \, \mathbb{B}\widetilde{f})$ and ${}^{w}d (\exp \circ \, \mathbb{B}\widetilde{f})$, act as follows 
\begin{equation}\label{exp-f-pullback}
\begin{split}
&{}^{e} (\exp \circ \, \mathbb{B}\widetilde{f})^*: C^\infty(\widetilde{N}, \Lambda^* ({}^{e}T^*\widetilde{N}) \otimes \mathscr{E})\to 
C^\infty(\widetilde{f}^*(\mathbb{B}^N), 
\Lambda^* ({}^{e}T^* \widetilde{f}^*(\mathbb{B}^N)\otimes \mathscr{E}'), \\
&{}^{w} (\exp \circ \, \mathbb{B}\widetilde{f})^*: C^\infty(\widetilde{N}, \Lambda^* ({}^{w}T^*\widetilde{N})\otimes \mathscr{E}) \to 
C^\infty(\widetilde{f}^*(\mathbb{B}^N), \Lambda^* ({}^{w}T^* \widetilde{f}^*(\mathbb{B}^N)\otimes \mathscr{E}').
\end{split}
\end{equation}
where $\mathscr{E}$ is the Mishchenko bundle as in Lemma \ref{pullback-lemma} and
$\mathscr{E}' := (\exp \circ \, \mathbb{B}\widetilde{f})^* \mathscr{E}$.
\end{prop}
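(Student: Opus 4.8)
The plan is to reduce everything to Lemma~\ref{pullback-lemma} applied to the smooth $b$-map $\exp \circ \, \mathbb{B}\widetilde{f}: \widetilde{f}^*(\mathbb{B}^N) \to \widetilde{N}$, together with a separate verification of surjectivity of the induced maps on edge and wedge tangent bundles. First I would observe that $\mathbb{B}^N \subset {}^{e}TN$ is a manifold with corners: its boundary hypersurfaces are the preimages under $\pi_N$ of the boundary hypersurfaces of $\widetilde{N}$, and the exponential map $\exp: \mathbb{B}^N \to \widetilde{N}$ is a smooth $b$-map (in fact a fibration), because geodesics of the complete edge metric $\rho_N^{-2}g_N$ starting tangent to a boundary fibre stay tangent to that fibre — this is exactly the bounded-geometry statement that makes $\exp$ well-defined on all of $\mathbb{B}^N$ and that it preserves the iterated boundary fibration structure. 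Likewise $\mathbb{B}\widetilde{f}: \widetilde{f}^*(\mathbb{B}^N) \to \mathbb{B}^N$ is a smooth $b$-map, being the pullback of the fibration $\pi_N$ along the smooth $b$-map $\widetilde{f}$. Hence the composite $\exp \circ \, \mathbb{B}\widetilde{f}$ is a smooth $b$-map between manifolds with corners, and $\widetilde{f}^*(\mathbb{B}^N)$ carries well-defined edge and wedge tangent bundles ${}^{e}T\widetilde{f}^*(\mathbb{B}^N)$, ${}^{w}T\widetilde{f}^*(\mathbb{B}^N)$ with the usual relation $C^\infty(\widetilde{f}^*(\mathbb{B}^N), {}^{w}T^*\widetilde{f}^*(\mathbb{B}^N)) = \rho \cdot C^\infty(\widetilde{f}^*(\mathbb{B}^N), {}^{e}T^*\widetilde{f}^*(\mathbb{B}^N))$ for a total boundary function $\rho$ on $\widetilde{f}^*(\mathbb{B}^N)$, which by construction may be taken to be the pullback of $\rho_N$ under $\exp \circ \, \mathbb{B}\widetilde{f}$ times a smooth nowhere-vanishing factor.

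Second, I would apply the argument of Lemma~\ref{pullback-lemma} verbatim to $\exp \circ \, \mathbb{B}\widetilde{f}$ in place of $f$. The key input there is the identity \eqref{d}, which only uses that the relevant total boundary functions pull back to smooth bounded functions with the reciprocal also bounded; since $\exp \circ \, \mathbb{B}\widetilde{f}$ sends boundary hypersurfaces to boundary hypersurfaces, the quotient $\rho^{-1}_{\widetilde{f}^*(\mathbb{B}^N)} \cdot (\exp \circ \, \mathbb{B}\widetilde{f})^* \rho_N$ is smooth and bounded (this uses the iterated structure exactly as on p.~298 of \cite{package}). This yields the continuous extensions ${}^{e}d(\exp \circ \, \mathbb{B}\widetilde{f})$ and ${}^{w}d(\exp \circ \, \mathbb{B}\widetilde{f})$ in \eqref{differentials}, and the pullbacks \eqref{exp-f-pullback} defined pointwise as in \eqref{pullback-explicit-e}, \eqref{pullback-explicit-w}; the fact that these pullbacks land in forms twisted by $\mathscr{E}' = (\exp \circ \, \mathbb{B}\widetilde{f})^*\mathscr{E}$ is immediate from the definition of pullback of a bundle, and compatibility with $d_{\mathscr{E}}$, $d_{\mathscr{E}'}$ follows from the same computation as at the end of the proof of Lemma~\ref{pullback-lemma}, using that $\mathscr{E}$ carries the flat connection and parallel sections pull back to parallel sections.

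Third — and this is the point genuinely beyond Lemma~\ref{pullback-lemma} — I would prove the \emph{surjectivity} assertion, which is what distinguishes $\exp \circ \, \mathbb{B}\widetilde{f}$ from a general $b$-map. In the open interior, $\exp: \mathbb{B}^N|_N \to N$ is a submersion (for each basepoint, varying the tangent vector sweeps out a full neighbourhood), so $d\exp$ is surjective there; composing with the projection $\mathbb{B}\widetilde{f}$, whose differential is surjective onto ${}^{e}T\mathbb{B}^N$ fibrewise by the pullback-bundle structure, we get that $d(\exp \circ \, \mathbb{B}\widetilde{f})$ is surjective on the interior. I would then argue that surjectivity persists under the continuous extension to the boundary by a dimension/closedness argument: the set of points where the extended bundle map fails to be surjective is closed, and the rank of ${}^{e}d(\exp \circ \, \mathbb{B}\widetilde{f})$ restricted to each boundary face can be computed from the induced map of boundary fibrations — here one uses that $\exp$ restricted to $\mathbb{B}^N|_{\partial\widetilde{N}} \to \partial\widetilde{N}$ is again (fibrewise) a submersion because the complete edge metric restricts to a complete metric on the links and on the base, so the exponential-type map on the boundary is still a submersion onto ${}^{e}T\partial\widetilde{N}$. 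Matching the edge-structure filtration (fibre directions, base directions, normal direction $x\partial_x$) on source and target, one checks the extended map is onto on each stratum. The wedge case then follows by multiplying through by the appropriate powers of the boundary defining functions, exactly as in \eqref{wedge-diff-def}. I expect the main obstacle to be precisely this last step: making rigorous that the interior submersion property of $\exp \circ \, \mathbb{B}\widetilde{f}$ extends to a fibrewise surjectivity of the \emph{edge} differential at the boundary faces, which requires carefully unwinding how $\exp$ interacts with the iterated cone structure of the complete edge metric and checking the claim face by face in the iterated stratification.
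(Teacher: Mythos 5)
Your overall route is the same as the paper's: reduce everything to the argument of Lemma \ref{pullback-lemma} applied to $\exp \circ \, \mathbb{B}\widetilde{f}$, once one knows (i) that $\mathbb{B}^N$ and $\widetilde{f}^*(\mathbb{B}^N)$ carry iterated boundary fibration structures with $\exp$ and $\mathbb{B}\widetilde{f}$ smoothly stratified, and (ii) that ${}^{e}d\exp$ is surjective up to the boundary. The difference is how (i) and (ii) are obtained: the paper simply cites \cite[\S 4.2]{ALMP3} for the structure of $\mathbb{B}^N$ and \cite[Lemma 4.3]{ALMP3} for the fact that $\exp$ is smoothly stratified with surjective edge differential, and then notes that the rescaling \eqref{wedge-diff-def} does not affect surjectivity, so the wedge case follows. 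You instead try to prove (ii) from scratch, and that is where your write-up has a genuine gap.

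Two concrete problems with your surjectivity argument. First, the interior step is stated for the wrong reason: $d(\mathbb{B}\widetilde{f})$ is \emph{not} surjective onto $T\mathbb{B}^N$ unless $\widetilde{f}$ is a submersion, which is not assumed. The composite is nonetheless a submersion in the interior, but because $\exp$ restricted to each disc fibre $\mathbb{B}^N_p$ is a diffeomorphism onto the geodesic ball, so the vertical directions of $\widetilde{f}^*(\mathbb{B}^N)$ (on which $\mathbb{B}\widetilde{f}$ is a fibrewise isomorphism) already map onto $T_{\exp(v)}N$. Second, and more seriously, your passage from interior surjectivity to surjectivity of the \emph{edge} differential at the boundary faces does not work as stated: surjectivity of a bundle map is an open condition, so its failure set is closed and could perfectly well be the whole boundary; density of the interior gives nothing. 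You acknowledge this and sketch a face-by-face rank computation, but you also concede it is the ``main obstacle'' and do not carry it out. That computation (how $\exp$ for the complete edge metric interacts with the iterated cone structure, face by face) is exactly the content of \cite[Lemma 4.3]{ALMP3}; without either citing it or actually performing the analysis, the surjectivity claim in \eqref{differentials} --- which is the only part of the proposition genuinely beyond Lemma \ref{pullback-lemma} --- remains unproved. The rest of your argument (manifold-with-corners structure, comparison of total boundary functions, reduction of the wedge case to the edge case via the bounded quotient of boundary defining functions, compatibility with the twisted differentials) matches the paper's and is fine.
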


\begin{proof} By  \cite[\S 4.2]{ALMP3}, the disc subbundle $\mathbb{B}^N$ 
is a manifold with iterated boundary fibration structure, where
each boundary hypersurface in $\mathbb{B}^N$ is the 
restriction of ${}^{e}TN$ to a boundary hypersurface of $\widetilde{N}$. Note that
the disc fibres of $\mathbb{B}^N$ are open by definition and hence the boundary 
of the discs is by definition not part of the boundary fibration structure. 
Since $f:\overline{M} \to \overline{N}$ is a smoothly stratified map, the induced bundle 
map $\mathbb{B}\widetilde{f}$ is again smoothly stratified.  
By \cite[Lemma 4.3]{ALMP3}  we know that $\exp$ is also smoothly stratified
and ${}^{e}d \exp$ is surjective (with respect to edge bundles). Surjectivity is not affected by the rescaling as
in \eqref{wedge-diff-def}. Hence the statement follows by 
exactly the same argument as in Lemma \ref{pullback-lemma}.
\end{proof}

 Below, it will become necessary to replace $\mathscr{E}' := 
 (\exp \circ \, \mathbb{B}\widetilde{f})^* \mathscr{E}$ by an isometric pullback bundle,
 where the isometry commutes with the pullback connections.
The notion of connections on $\mathscr{N}\Gamma$-Hilbert module bundles
is presented e.g. in \cite[\S 3]{Schick}. In fact, \cite[Definition 3.2]{Schick} introduces connections 
on $\mathscr{N}\Gamma$-Hilbert module bundles, \cite[Definition 3.4]{Schick} introduces pullback bundles 
and pullback connections. 

\begin{lemma}\label{retract-lemma} Consider the Mishchenko bundle 
$\mathscr{E} = \widetilde{N}_\Gamma \times_\Gamma \ell^2\Gamma$ with the canonical flat connection,
induced by the exterior derivative on $\widetilde{N}_\Gamma$. Consider the pullback bundles
$(\exp \circ \, \mathbb{B}\widetilde{f})^* \mathscr{E}$ and $(\widetilde{f} \circ \pi)^* \mathscr{E}$ with the
pullback connections, see Figure  \ref{maps-diagram} for the various maps. 
Then there exists an isometry of $\mathscr{N}\Gamma$-Hilbert module bundles,
commuting with the flat pullback connections
$$
\mathscr{J}: (\exp \circ \, \mathbb{B}\widetilde{f})^* \mathscr{E} \to  (\widetilde{f} \circ \pi)^* \mathscr{E}.
$$
\end{lemma}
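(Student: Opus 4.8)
The key observation is that the two maps $\exp \circ \, \mathbb{B}\widetilde{f}$ and $\widetilde{f} \circ \pi$ from $\widetilde{f}^*(\mathbb{B}^N)$ to $\widetilde{N}$ are homotopic through a \emph{canonical} geodesic homotopy: for a point $\xi \in \widetilde{f}^*(\mathbb{B}^N)$ lying over $p \in \widetilde{M}$, with $\mathbb{B}\widetilde{f}(\xi) \in \mathbb{B}^N|_{\widetilde{f}(p)}$ a tangent vector of length $< \delta$, one sets $H_s(\xi) := \exp_{\widetilde{f}(p)}(s \cdot \mathbb{B}\widetilde{f}(\xi))$ for $s \in [0,1]$, so that $H_0 = \widetilde{f}\circ\pi$ and $H_1 = \exp \circ \,\mathbb{B}\widetilde{f}$. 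Since $\mathscr{E}$ carries a flat connection, parallel transport along the curves $s \mapsto H_s(\xi)$ is independent of the path up to endpoint-fixing homotopy, and supplies a fibrewise-linear map $\mathscr{J}_\xi : ((\exp \circ \, \mathbb{B}\widetilde{f})^*\mathscr{E})_\xi = \mathscr{E}_{H_1(\xi)} \to \mathscr{E}_{H_0(\xi)} = ((\widetilde{f}\circ\pi)^*\mathscr{E})_\xi$. First I would verify that $\mathscr{J}$ so defined is smooth in $\xi$ (including up to the boundary faces, using that all the maps involved are smoothly stratified $b$-maps by Proposition \ref{exp-f-prop} and Lemma \ref{pullback-lemma}, and that the complete edge metric $\rho_N^{-2}g_N$ has bounded geometry so that $\exp$ depends smoothly on its arguments on the disc bundle of radius $\delta$).

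Second, I would check that $\mathscr{J}$ is an isometry of $\mathscr{N}\Gamma$-Hilbert module bundles. This is immediate from the construction: the flat connection on $\mathscr{E} = \widetilde{N}_\Gamma \times_\Gamma \ell^2\Gamma$ is metric (parallel transport is realized by the deck-transformation action of $\Gamma$ on $\ell^2\Gamma$, which is unitary and $\mathscr{N}\Gamma$-linear), so parallel transport along any curve is a $\Gamma$-equivariant isometry of fibres; the induced fibre maps on the pullback bundles inherit this property.

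Third, I would show $\mathscr{J}$ intertwines the pullback connections. Here I would argue that for any section $s$ of $\mathscr{E}$ over $\widetilde{N}$ that is \emph{flat} (i.e.\ locally constant with respect to the canonical connection — such sections locally trivialize $\mathscr{E}$), the pullback $(\exp \circ \, \mathbb{B}\widetilde{f})^* s$ is flat for the pullback connection on $(\exp \circ \, \mathbb{B}\widetilde{f})^*\mathscr{E}$, and likewise $(\widetilde{f}\circ\pi)^* s$ is flat for the pullback connection on $(\widetilde{f}\circ\pi)^*\mathscr{E}$; moreover by the path-independence of parallel transport for a flat connection, $\mathscr{J}$ sends $(\exp \circ \, \mathbb{B}\widetilde{f})^* s$ precisely to $(\widetilde{f}\circ\pi)^* s$. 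Since flat sections locally span, and both pullback connections are characterized by having these as their flat sections, $\mathscr{J}$ commutes with the connections. Equivalently and more invariantly, one can note that both pullback connections are the flat connections associated to the respective pulled-back flat $\ell^2\Gamma$-bundles, classified by the composite homomorphisms $\pi_1(\widetilde{f}^*(\mathbb{B}^N)) \to \pi_1(\widetilde{N}) \to \Gamma$ induced by $\exp \circ \, \mathbb{B}\widetilde{f}$ and $\widetilde{f}\circ\pi$ respectively; these two homomorphisms coincide because the maps are homotopic, whence the flat bundles — and their connections — are canonically isomorphic, the isomorphism being exactly $\mathscr{J}$.

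The main obstacle is the regularity of $\mathscr{J}$ at the singular/boundary faces of $\widetilde{f}^*(\mathbb{B}^N)$: one must confirm that the geodesic homotopy $H_s$ is itself a smoothly stratified $b$-map jointly in $(s,\xi)$ and that parallel transport along it depends smoothly on $\xi$ up to the corners, rather than merely in the open interior. This follows from the fact that the homotopy takes place inside the disc bundle $\mathbb{B}^N$ of uniformly bounded radius $\delta$ below the injectivity radius, that $\exp$ extends smoothly to $\mathbb{B}^N \to \widetilde{N}$ mapping boundary faces to boundary faces (as recalled before Figure \ref{maps-diagram}), and that the flat connection on $\mathscr{E}$ extends as a smooth flat connection over $\widetilde{N}$ (it is the pullback under the blowdown map of the Mishchenko bundle over $\overline{N}$). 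Granting these structural facts, the three steps above combine to yield $\mathscr{J}$ with the asserted properties.
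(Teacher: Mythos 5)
Your proposal is correct and follows essentially the same route as the paper: the paper also constructs $\mathscr{J}$ by parallel transport with respect to the flat connection along the radial homotopy $r_s\colon v\mapsto sv$ in $\mathbb{B}^N$, which composed with $\exp$ and $\mathbb{B}\widetilde{f}$ is exactly your geodesic homotopy $H_s$ (using $\exp\circ r_0=\pi_N$ and $\pi_N\circ\mathbb{B}\widetilde{f}=\widetilde{f}\circ\pi$). The paper verifies compatibility with the pullback connections by showing parallel transports along the two sides of a null-homotopic rectangle commute, which is the same flatness fact underlying your flat-local-sections and holonomy-representation arguments.
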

\begin{proof}
Consider the continuous family 
$$
r_t: \mathbb{B}^N \to \mathbb{B}^N, \quad  \mathbb{B}^N_p  \ni v \mapsto tv \in \mathbb{B}^N_p,
$$
for $t \in [0,1]$. Clearly pullback by homotopic maps gives isomorphic bundles. We use now the homotopy and the flat connection to
explicitly construct a bundle isomorphism below, denoted by  $\mathscr{J}'$,  that 
preserves the given flat connections
$$
\mathscr{J}': r_0^* \exp^* \mathscr{E} \to r_1^* \exp^* \mathscr{E}.
$$
Consider the 
flat connection $\nabla$ on $\exp^* \mathscr{E}$, obtained as the pullback of the canonical
flat connection on the Mishchenko bundle $\mathscr{E}$. We claim that this isomorphism $\mathscr{J}'$
commutes with the pullback connections $r_0^*\nabla$ and $r_1^*\nabla$. Indeed, consider 
$$
r: [0,1] \times \mathbb{B}^N \to \mathbb{B}^N, \quad (t,v) \mapsto r_t(v).
$$
Consider the pullback bundle $r^*\exp^* \mathscr{E}$ and the flat pullback connection $r^*\nabla$ on 
that bundle. Clearly, we have the following restrictions
\begin{align*}
\begin{array}{lll}
&\Bigl. r^*\exp^* \mathscr{E} \Bigr|_{\{0\} \times \mathbb{B}^N} = r_0^* \exp^* \mathscr{E}, 
\quad &\Bigl. r^*\nabla  \Bigr|_{\{0\} \times \mathbb{B}^N} = r_0^*\nabla, \\
&\Bigl. r^*\exp^* \mathscr{E} \Bigr|_{\{1\} \times \mathbb{B}^N} = r_1^* \exp^* \mathscr{E}, 
\quad &\Bigl. r^*\nabla  \Bigr|_{\{1\} \times \mathbb{B}^N} = r_1^*\nabla. 
\end{array}
\end{align*}
The isomorphism $\mathscr{J}'$ may be constructed explicitly 
via parallel transport $P$ by $r^*\nabla$:
given any $(0,p) \in [0,1] \times \mathbb{B}^N$, consider the path $\gamma(t) := (t,p), t \in [0,1]$ and define 
$$
\mathscr{J}':= P_\gamma: r_0^* \exp^* \mathscr{E} \to r_1^* \exp^* \mathscr{E}.
$$
It is an isometry, since the metric in each fibre 
is given by the $\ell^2\Gamma$ inner product and parallel transport preserves the inner product.
The isomorphism commutes with the pullback connections $r_0^*\nabla$ and $r_1^*\nabla$, 
if for any continuous path $\gamma'$ between $p,q, \in \mathbb{B}^N$
\begin{align}\label{null-path}
P_\gamma \circ P_{\gamma'} = P_{\gamma'} \circ P_\gamma.
\end{align}
This is illustrated in the commutative diagram in Figure \ref{commuting-pullback-connections2}.
We illustrate the curves $\gamma$ and $\gamma'$ as well as the parallel transports along those in Figure \ref{commuting-pullback-connections}.

\begin{figure}[h!]
\centering
\begin{tikzpicture}

\draw[->] (5.4,0) -- (6.8,0);
\node at (4.5,0) {$\Bigl. r_0^* \exp^* \mathscr{E} \Bigr|_{q}$};
\node at (8,0) {$\Bigl. r_1^* \exp^* \mathscr{E} \Bigr|_{q}$};
\node at (6.3,-0.5) {$P_\gamma$};

\draw[->] (5.4,2) -- (6.8,2);
\node at (4.5,2) {$\Bigl. r_0^* \exp^* \mathscr{E} \Bigr|_{p}$};
\node at (8,2) {$\Bigl. r_1^* \exp^* \mathscr{E} \Bigr|_{p}$};
\node at (6.3,2.5) {$P_\gamma$};

\draw[->] (4,1.5) -- (4,0.5);
\draw[->] (8.2,1.5) -- (8.2,0.5);

\node at (3.2,1) {$P_{\gamma'}$};
\node at (9.1,1) {$P_{\gamma'}$};

\end{tikzpicture}

\caption{Parallel transport commutes along null-homotopic paths.}
\label{commuting-pullback-connections2}
\end{figure}

\begin{figure}[h!]
\centering
\begin{tikzpicture}

\node at (14.2,1.1) {\includegraphics[scale=0.5]{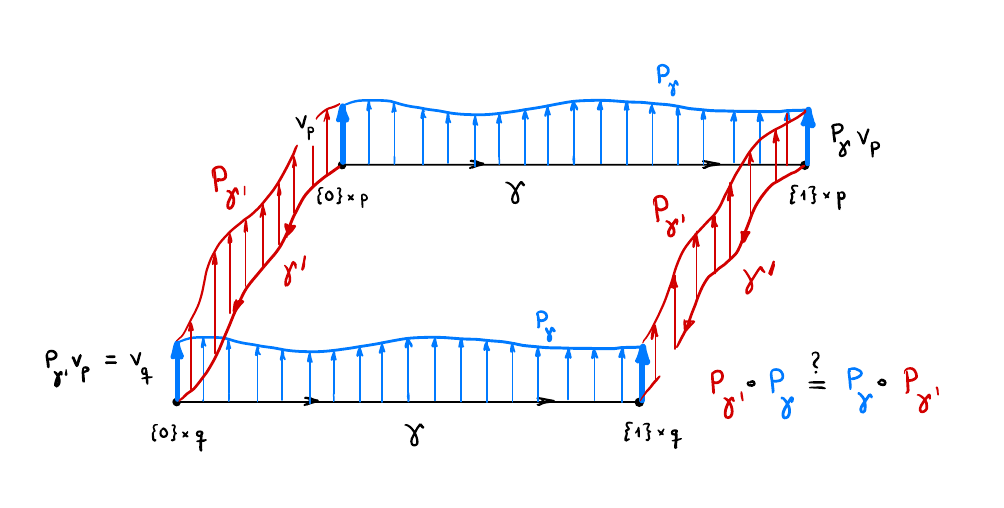}};

\end{tikzpicture}

\caption{Illustration of parallel transports across $[0,1] \times \mathbb{B}^N$.}
\label{commuting-pullback-connections}
\end{figure}

\noindent Now \eqref{null-path} holds 
since parallel transport for flat connections is trivial along null-homotopic paths. 
Let us explain why the closed path $F$ in Figure \ref{commuting-pullback-connections} is null-homotopic. 
The path $F$ is by definition traversed by travelling along the following sequence of paths in $[0,1] \times \mathbb{B}^N$
\begin{enumerate}
\item $(0,\gamma'(t)) \in \{0\} \times \mathbb{B}^N, \ t \in [0,1]$ \\
(we move along the bottom red curve at the left hand side in Fig. \ref{commuting-pullback-connections}),
\item $(t,q= \gamma'(1)) \in [0,1] \times \{q\}, \ t \in [0,1]$ \\ 
(we move along the bottom back curve in Fig. \ref{commuting-pullback-connections}), 
\item $(1,\gamma'(1-t)) \in \{1\} \times \mathbb{B}^N, \ t \in [0,1]$ \\ 
(we move backward along the bottom red curve at the RHS in Fig. \ref{commuting-pullback-connections}),
\item $(1-t,p= \gamma'(0)) \in [0,1] \times \{p\}, \ t \in [0,1]$ \\
(we move backward along the upper back curve in Fig. \ref{commuting-pullback-connections}).
\end{enumerate}
We replace that path $F$ by a family $F[s_1,s_2]$ of closed curves inside $[0,1] \times \mathbb{B}^N$
for any parameters $(s_1,s_2) \in [0,1]\times [0,1]$, namely we define $F[s_1,s_2]$ by
\begin{enumerate}
\item $(0,\gamma'(s_1 t)) \in \{0\} \times \mathbb{B}^N, \ t \in [0,1]$, \\
(we travel now only part of the way along the bottom red curve of the left hand side in Fig. \ref{commuting-pullback-connections})
\item $(s_2t,\gamma'(s_1)) \in [0,s_2] \times \{\gamma'(s_1)\}, \ t \in [0,1]$, \\
(we move parallel to the bottom back curve in Fig. \ref{commuting-pullback-connections}, and 
in fact again only part of the way), 
\item $(s_2,\gamma'(s_1(1-t))) \in \{s_2\} \times \mathbb{B}^N, \ t \in [0,1]$, \\
(we move now backwards parallel to the bottom red curve of the right hand side in Fig. \ref{commuting-pullback-connections}),
\item $(s_2(1-t),p= \gamma'(0)) \in [0,s_2] \times \{p\}, \ t \in [0,1]$ \\
(we move backwards along the upper back curve in Fig. \ref{commuting-pullback-connections}).
\end{enumerate}
The path $F= F(1,1)$ is null-homotopic by first deforming $F(1,1)$ to $F(0,1)$, 
and then deforming $F(0,1)$ to $F(0,0) \equiv \{(0,p)\}$. This proves \eqref{null-path}.
This proves that $\mathscr{J}'$ commutes with the pullback connections.
\medskip

We arrive at the following
sequence of bundle isomorphisms (note that $r_1 = \textup{id}$)
\begin{align*}
\exp^* \mathscr{E} = r_1^* \exp^* \mathscr{E} \stackrel{\mathscr{J}'}{\cong} r_0^* \exp^* \mathscr{E}
= (\exp \, \circ \, r_0)^*  \mathscr{E} = (\pi_N \circ r_0)^*  \mathscr{E} = \pi_N^* \mathscr{E}.
\end{align*}
From here we conclude, using $\pi_N \circ \mathbb{B}\, \widetilde{f} = \widetilde{f}  \circ \pi$
\begin{align*}
(\exp \circ \, \mathbb{B}\widetilde{f})^* \mathscr{E} = \mathbb{B}\widetilde{f}^* (\exp^* \mathscr{E})
\cong \mathbb{B}\widetilde{f}^* (\pi_N^* \mathscr{E}) = (\pi_N \circ \mathbb{B}\, \widetilde{f} )^* \mathscr{E} 
= (\widetilde{f} \circ \pi)^* \mathscr{E}.
\end{align*}
This constructs the isomorphism $\mathscr{J}$. Since the only non-trivial identification in 
$\mathscr{J}$ is the isomorphism $\mathscr{J}'$, it is an isometry and commutes with the pullback connections.
\end{proof}

We can now define the Hilsum-Skandalis replacement for $f^*$.
This requires a Thom form $\tau[{}^eTN]$ of the edge tangent bundle ${}^eTN$
as an additional ingredient. By definition, it is a closed differential form with compact support and 
of degree $\dim N$ on the total space of ${}^eTN$, such that its fibrewise integration gives the
constant function $1$. \medskip

Hence, in a local trivialization, the Thom form
is a family of top degree compactly supported differential forms along the fibres of 
${}^eTN$, varying smoothly in the base $\widetilde{N}$. Up to rescaling, 
we may assume that $\tau[{}^eTN]$ is compactly supported in the disc bundle $\mathbb{B}^N$.
Writing $p: {}^eTN \to \widetilde{N}$ for the bundle projection, 
we choose a connection that defines a decomposition into vertical and horizontal bundles
and thus we get an isomorphism
$$
{}^{e/w}T \mathbb{B}^N \cong p^* ({}^{e/w}TN) \oplus T^V \mathbb{B}^N,
$$ 
where the vertical component $T^V \mathbb{B}^N$ 
is the same both for the edge and wedge tangent bundles, since
the fibres in $\mathbb{B}^N$ are open discs.
As a differential form on $\mathbb{B}^N$, the Thom form 
$\tau[{}^eTN]$ has no horizontal components and hence can be viewed as a section of 
the exterior algebra of both ${}^{e}T^* \mathbb{B}^N$ and ${}^{w}T^* \mathbb{B}^N$, i.e.
$$
\tau[{}^eTN] \in C^\infty(\mathbb{B}^N,
\Lambda^* ({}^{e/w}T^* \mathbb{B}^N).
$$
Thus, also the pullbacks of $\tau[{}^eTN]$ by ${}^e(\mathbb{B}\widetilde{f})^*$ and 
${}^w(\mathbb{B}\widetilde{f})^*$ coincide, independent of whether we use
edge or wedge pullback, i.e.
\begin{align*}
(\mathbb{B}\widetilde{f})^*\tau[{}^eTN] :=
{}^e(\mathbb{B}\widetilde{f})^*\tau[{}^eTN] = {}^w(\mathbb{B}\widetilde{f})^*\tau[{}^eTN]
\in C^\infty(\widetilde{f}^*(\mathbb{B}^N), 
\Lambda^* ({}^{e/w}T^* \widetilde{f}^*(\mathbb{B}^N)).
\end{align*}
We can now define the Hilsum-Skandalis maps by a sequence of the following operations
in each degree $k$
\begin{align*}
&C^\infty\Bigl(\widetilde{N}, \Lambda^k ({}^{e/w}T^*N)\otimes \mathscr{E}\Bigr) 
\\ &\xrightarrow{{}^{e/w}(\exp \circ \, \mathbb{B}\widetilde{f})^*} \quad C^\infty\Bigl(\widetilde{f}^*(\mathbb{B}^N), 
\Lambda^k ({}^{e/w}T^* \widetilde{f}^*(\mathbb{B}^N))\otimes 
(\exp \circ \, \mathbb{B}\widetilde{f})^* \mathscr{E}\Bigr) \\
&\xrightarrow{\qquad \mathscr{J} \qquad } \quad C^\infty\Bigl(\widetilde{f}^*(\mathbb{B}^N), 
\Lambda^k ({}^{e/w}T^* \widetilde{f}^*(\mathbb{B}^N))\otimes (\widetilde{f} \circ \pi)^* \mathscr{E}\Bigr)
\\ &\xrightarrow{(\mathbb{B}\widetilde{f})^*\tau \bigl[{}^{e}T N \bigr] \wedge} 
 \quad C^\infty\Bigl(\widetilde{f}^*(\mathbb{B}^N), 
\Lambda^{k+\dim N} ({}^{e/w}T^* \widetilde{f}^*(\mathbb{B}^N))\otimes (\widetilde{f} \circ \pi)^* \mathscr{E}\Bigr)
\\ &\xrightarrow{\qquad \pi_* \qquad } \quad C^\infty\Bigl(\widetilde{M}, 
\Lambda^{k} ({}^{e/w}T^* M)\otimes \widetilde{f}^* \mathscr{E}\Bigr).
\end{align*}

\begin{thm}\label{HS-thm} Let $f: \overline{M} \to \overline{N}$ be smooth, strongly stratum preserving. 
Consider thecThom form $\tau[{}^{e}T N]$.
Then the Hilsum-Skandalis maps 
\begin{equation}\label{HS}
\begin{split}
&{}^{e}HS(f): C^\infty(\widetilde{N}, \Lambda^* ({}^{e}T^*N)\otimes \mathscr{E}) \to C^\infty(\widetilde{M}, \Lambda^* ({}^{e}T^*M)\otimes \widetilde{f}^*\mathscr{E}), \\ 
&{}^{e}HS(f) \omega  := \pi_* \Bigl((\mathbb{B}\widetilde{f})^*\tau \bigl[{}^{e}T N \bigr] \wedge \mathscr{J} 
\circ {}^{e}(\exp \circ \, \mathbb{B}\widetilde{f})^* \omega \Bigr) \\
&\qquad \qquad {}^{w}HS(f): C^\infty(\widetilde{N}, \Lambda^* ({}^{w}T^*N)\otimes \mathscr{E}) \to C^\infty(\widetilde{M}, \Lambda^* ({}^{w}T^*M)\otimes \widetilde{f}^*\mathscr{E}), \\ 
&\qquad \qquad {}^{w}HS(f) \omega  := \pi_* \Bigl((\mathbb{B}\widetilde{f})^*\tau\bigl[{}^{e}T N\bigr] \wedge \mathscr{J} \circ {}^{w}(\exp \circ \, \mathbb{B}\widetilde{f})^* \omega \Bigr),
\end{split}
\end{equation}
are well-defined and extend to bounded maps on the corresponding $L^2$ completions
\begin{equation}\label{HSL2}
\begin{split}
&{}^{e}HS(f): L^2(\widetilde{N}, \Lambda^* ({}^{e}T^*N)\otimes \mathscr{E}, \rho_N^{-2} g_{\mathscr{E}}) 
\to L^2(\widetilde{M}, \Lambda^* ({}^{e}T^*M)\otimes \widetilde{f}^*\mathscr{E}, \rho_M^{-2}g_{\mathscr{E}}), \\
&{}^{w}HS(f): L^2(\widetilde{N}, \Lambda^* ({}^{w}T^*N) \otimes \mathscr{E}, g_{\mathscr{E}}) 
\to L^2(\widetilde{M}, \Lambda^* ({}^{w}T^*M)\otimes \widetilde{f}^*\mathscr{E}, g_{\mathscr{E}}).
\end{split}
\end{equation}
The maps commute with the twisted de Rham differentials 
$d_{\mathscr{E}}$ and $d_{\widetilde{f}^*\mathscr{E}}$ 
on smooth sections. The metrics $g_{\mathscr{E}}$ are induced by the wedge metrics 
$g_N,g_M$ together with the 
$\ell^2\Gamma$ inner product along the fibers of $\mathscr{E}$.  
\end{thm}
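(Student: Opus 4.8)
The plan is to verify the three assertions of the theorem in the order they are stated — well-definedness of the maps \eqref{HS} on smooth sections, their commutation with the twisted differentials, and the boundedness of the extensions \eqref{HSL2} — and to carry out all arguments for the edge version ${}^{e}HS(f)$ only. The wedge version ${}^{w}HS(f)$ then follows: as already observed in the proof of Lemma \ref{pullback-lemma} and in Proposition \ref{exp-f-prop}, the boundary defining functions $\rho_M,\rho_N$ entering ${}^{w}d(\exp\circ\,\mathbb{B}\widetilde f)$ combine to the constant function $1$, and, as noted before the statement, the pullback of the Thom form $\tau[{}^{e}TN]$ is insensitive to the choice of edge versus wedge structure; alternatively one passes between the edge and wedge $L^2$ spaces through the standard isometries given by multiplication with a power of $\rho$.

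For well-definedness on smooth sections I would go through the four constituent operations: ${}^{e}(\exp\circ\,\mathbb{B}\widetilde f)^*$ is defined by Proposition \ref{exp-f-prop}, $\mathscr{J}$ is a smooth bundle isometry by Lemma \ref{retract-lemma}, wedging with $(\mathbb{B}\widetilde f)^*\tau[{}^{e}TN]$ preserves smoothness, and — the point that makes the construction work — since $\tau[{}^{e}TN]$ is compactly supported in the open disc bundle $\mathbb{B}^N$, the form $(\mathbb{B}\widetilde f)^*\tau[{}^{e}TN]\wedge \mathscr{J}\circ {}^{e}(\exp\circ\,\mathbb{B}\widetilde f)^*\omega$ has compact support in each open fibre of $\pi\colon\widetilde f^*(\mathbb{B}^N)\to\widetilde M$, so its fibrewise integral $\pi_*$ is again a smooth section over $\widetilde M$. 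Here $\mathscr{J}$ is indispensable: after applying it the coefficient bundle becomes $(\widetilde f\circ\pi)^*\mathscr{E}=\pi^*(\widetilde f^*\mathscr{E})$, pulled back along $\pi$, so that $\pi_*$ is $\widetilde f^*\mathscr{E}$-linear and produces a section over $\widetilde M$ twisted by $\widetilde f^*\mathscr{E}$, as required. For the commutation with $d$ I would combine four facts: the intertwining of $d_{\mathscr{E}}$ and $d_{\mathscr{E}'}$ by ${}^{e}(\exp\circ\,\mathbb{B}\widetilde f)^*$, which is the computation already performed in Lemma \ref{pullback-lemma}; the intertwining of the corresponding twisted differentials by $\mathscr{J}$, valid because $\mathscr{J}$ commutes with the flat pullback connections by Lemma \ref{retract-lemma}; closedness of the Thom form, which gives $d\bigl((\mathbb{B}\widetilde f)^*\tau\wedge\alpha\bigr)=(-1)^{\dim N}(\mathbb{B}\widetilde f)^*\tau\wedge d\alpha$; and the identity $d\circ\pi_*=\pi_*\circ d$ on forms with compact vertical support, i.e.\ the fibrewise Stokes theorem, which produces no boundary term because the disc fibres are open and the integrand is supported away from their boundary. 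Composing the four steps gives the claim on smooth sections, and continuity of the bounded extension upgrades it to the maximal domains.

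The heart of the matter, and the step I expect to be the main obstacle, is the $L^2$-boundedness. I would realize ${}^{e}HS(f)$ as an averaging operator: identifying the fibre $\pi^{-1}(m)$ with the $\delta$-disc in ${}^eT_{\widetilde f(m)}N$ in the complete edge metric $\rho_N^{-2}g_N$, and using that $\exp$ sends it into a geodesic ball of fixed radius around $\widetilde f(m)$, one writes $\bigl({}^{e}HS(f)\omega\bigr)(m)=\int_{\pi^{-1}(m)}K(m,\cdot)\,(\exp\circ\,\mathbb{B}\widetilde f)^*\omega$ with a kernel $K$ assembled from $\tau[{}^{e}TN]$, the Jacobian of $\exp$, the differential ${}^ed(\exp\circ\,\mathbb{B}\widetilde f)$ and $\mathscr{J}$ — all of which are smooth and bounded over the manifold with corners $\widetilde f^*(\mathbb{B}^N)$, so $|K|$ is uniformly bounded. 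A Cauchy--Schwarz estimate in the fibre, Fubini's theorem, and the substitution $n=\exp(v)$ then give
\[
\| {}^{e}HS(f)\omega \|_{L^2(\widetilde{M},\rho_M^{-2}g_{\mathscr{E}})}^{2}\ \le\ C\int_{\widetilde{N}}|\omega(n)|^{2}\,\nu(n)\,d\mathrm{vol}_{\rho_N^{-2}g_N}(n),\qquad \nu(n):=\mathrm{vol}_{\rho_M^{-2}g_M}\{m:\ d_{\rho_N^{-2}g_N}(\widetilde{f}(m),n)<\delta\}.
\]
Everything thus reduces to a uniform bound $\nu\le C'$ on the non-compact manifold $\widetilde N$. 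This is where the geometry enters: both complete edge metrics are of bounded geometry, $\widetilde f$ is a $b$-map carrying boundary hypersurfaces to boundary hypersurfaces, and ${}^ed(\exp\circ\,\mathbb{B}\widetilde f)$ is surjective by Proposition \ref{exp-f-prop}, so locally $\widetilde f$ is — up to uniformly controlled distortion — a Riemannian submersion, and the preimage under $\widetilde f$ of a ball of radius $\delta$ lies in a uniformly bounded number of coordinate charts of uniformly bounded volume, the codimension-preserving hypothesis ensuring that this remains uniform as one approaches the singular strata. Granting $\nu\le C'$, density of compactly supported smooth sections in the $L^2$ spaces of \eqref{HSL2} yields the bounded extension; the computation above together with continuity shows that the extension still commutes with the twisted differentials; and the wedge case follows by the first paragraph.

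The genuinely delicate point is therefore the uniform estimate on $\nu$ near the singular strata — controlling the $\rho_M^{-2}g_M$-volume of the $\widetilde f$-preimage of a small ball uniformly over a non-compact complete edge manifold. Should a direct geometric argument turn out to be cumbersome, a viable alternative is a weighted Schur test performed on the resolution: one views the Schwartz kernel of ${}^{e}HS(f)$ as a distribution on $\widetilde M\times\widetilde N$ supported near the correspondence determined by $\exp\circ\,\mathbb{B}\widetilde f$ and $\pi$, conormal along it, and uses the surjectivity in Proposition \ref{exp-f-prop} together with a suitable power of the total boundary defining function as Schur weight to conclude boundedness on the relevant weighted $L^2$ spaces.
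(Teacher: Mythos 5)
Your treatment of well-definedness and of the commutation with the twisted differentials matches the paper's: the paper likewise assembles these from Proposition \ref{exp-f-prop}, the fact that $\mathscr{J}$ commutes with the flat pullback connections (Lemma \ref{retract-lemma}), closedness of the Thom form, and the compatibility of pullback with $d_{\mathscr{E}}$ from Lemma \ref{pullback-lemma}. Those parts are fine. The divergence — and the gap — is in the $L^2$-boundedness. The paper does not prove this from scratch: it observes that $(M,\rho_M^{-2}g_M)$ and $(N,\rho_N^{-2}g_N)$ are complete of bounded geometry and cites \cite[Proposition 4.5]{Spess}, which extends the Hilsum--Skandalis boundedness argument to that setting (the construction here differs from the one there only by the isometry $\mathscr{J}$, which is harmless). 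The wedge case is then reduced to the edge case by writing $\omega=\rho_N^k(\rho_N^{-k}\omega)$ and extracting the factor $\alpha=(\exp\circ\,\mathbb{B}\widetilde f)^*\rho_N^k\cdot\pi^*\rho_M^{-k}$, whose boundedness is exactly where the smoothly stratified, codimension-preserving hypothesis enters (so that $\rho_M^{-1}\widetilde f^*\rho_N$ is bounded); your ``alternative'' route via conjugation with powers of $\rho$ is essentially this, but your primary justification (``the boundary defining functions combine to the constant function $1$'') does not by itself control the transition between the two $L^2$ norms.

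The concrete flaw is in your justification of the uniform bound $\nu\le C'$. You infer from the surjectivity of ${}^{e}d(\exp\circ\,\mathbb{B}\widetilde f)$ in Proposition \ref{exp-f-prop} that $\widetilde f$ is ``locally a Riemannian submersion up to uniformly controlled distortion.'' That is a non sequitur: the differential of $\exp\circ\,\mathbb{B}\widetilde f$ is surjective essentially because of the disc-bundle fibre directions (at the zero section $d\exp$ restricted to the vertical subbundle is already onto), and this says nothing about ${}^{e}d\widetilde f$ itself. A smooth strongly stratum preserving map need not be a submersion anywhere, and without further input there is no a priori uniform control on $\mathrm{vol}_{\rho_M^{-2}g_M}\bigl(\widetilde f^{-1}(B(n,\delta))\bigr)$ over the non-compact complete manifold $(M,\rho_M^{-2}g_M)$; bounded geometry of source and target alone does not give it, since $\widetilde f$ may be strongly contracting on large regions. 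So the reduction to $\nu\le C'$ is correct, but the step that is supposed to establish $\nu\le C'$ would fail as written. This is precisely the content that the paper delegates to \cite[Proposition 4.5]{Spess}; your fallback (a weighted Schur test on the resolution) points in a workable direction but is not carried out, so as it stands the boundedness claim — which you correctly identify as the heart of the theorem — is not proved.
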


\begin{proof}
The maps are well-defined by Proposition \ref{exp-f-prop}. 
We now prove \eqref{HSL2}. Note first that $(M, \rho_M^{-2}g_{M})$ and $(N, \rho_N^{-2}g_{N})$
are of bounded geometry. Hence, boundedness of ${}^{e}HS(f)$ is discussed in detail in
\cite[Proposition 4.5]{Spess}, who extended \cite{Hilsum-Skandalis} to non-compact 
Riemannian manifolds of bounded geometry. Note here, that the construction of ${}^{e}HS(f)$ differs from
$T_f$ in \cite[(112), (147)]{Spess} only by the isomorphism $\mathscr{J}$.

We now establish boundedness of ${}^{w}HS(f)$: note that $(\exp \circ \, \mathbb{B}\widetilde{f})$
and $\pi$ are both smooth, strongly stratum preserving. Consequently, writing $\rho_N$ and
$\rho_{\widetilde{f}^*(\mathbb{B}^N)}$ for the total boundary defining functions on $N$ and 
$\widetilde{f}^*(\mathbb{B}^N)$, respectively, we conclude that the quotients
$$
\frac{(\exp \circ \, \mathbb{B}\widetilde{f})^*\rho_N}{\rho_{\widetilde{f}^*(\mathbb{B}^N)}}, 
\quad \frac{\rho_{\widetilde{f}^*(\mathbb{B}^N)}}{\pi^*\rho_M}, 
$$
are uniformly bounded. Given now 
$\omega \in C^\infty(\widetilde{N}, \Lambda^k ({}^w T^*N)\otimes \mathscr{E})$
of any fixed degree $k$, we note that 
$\rho_N^{-k} \omega \in C^\infty(N, \Lambda^k ({}^e T^*N)\otimes \mathscr{E})$.
Therefore, we may compute
\begin{align*}
{}^{w}HS(f) \omega &= \pi_* \Bigl((\mathbb{B}\widetilde{f})^* \tau\bigl[{}^{e}TN\bigr]
\wedge \mathscr{J} \circ 
{}^w(\exp \circ \, \mathbb{B}\widetilde{f})^* (\rho_N^{k} \rho_N^{-k} \omega) \Bigr) \\
&= \pi_* \Bigl((\mathbb{B}\widetilde{f})^* \tau\bigl[{}^{e}TN\bigr] \wedge \mathscr{J} \circ 
 (\exp \circ \, \mathbb{B}\widetilde{f})^* \rho_N^k \cdot  {}^e(\exp \circ \, \mathbb{B}\widetilde{f})^* (\rho_N^{-k} \omega) \Bigr) \\
 &= \rho^k_M \cdot \pi_* \Bigl( \alpha 
 \cdot (\mathbb{B}\widetilde{f})^* \tau\bigl[{}^{e}TN\bigr] \wedge \mathscr{J} \circ 
 {}^e(\exp \circ \, \mathbb{B}\widetilde{f})^* (\rho_N^{-k} \omega) \Bigr), \\
&\textup{where} \ \alpha := (\exp \circ \, \mathbb{B}\widetilde{f})^*\rho^k_N
\cdot \pi^*\rho^{-k}_M.
\end{align*}
The presence of the bounded factor $\alpha$
does not affect the argument in \cite[Proposition 4.5]{Spess} and hence $\rho^{-k}_M \cdot {}^{w}HS(f) \rho_N^k$
is again bounded on the $L^2$ completions of $k$-th degree twisted differential forms 
with respect to $\rho_N^{-2} g_{\mathscr{E}}$ and $\rho_M^{-2} g_{\mathscr{E}}$.
This is equivalent to boundedness of ${}^{w}HS(f)$ in \eqref{HSL2} as claimed.
\medskip

The last claim that ${}^{e/w}HS(f)$ commutes with the de Rham differentials 
follows, as already observed in \cite[Proposition 9.3 (i)]{package} and \cite{Alb-survey}, 
from the fact that both maps are compositions of the isometry $\mathscr{J}$, 
pullback and exterior product by a closed form. The pullbacks commute
with the twisted differentials, as noted in Lemma \ref{pullback-lemma}. 
The isometry $\mathscr{J}$ commutes with the pullback connections by 
Lemma \ref{retract-lemma} and hence ${}^{e/w}HS(f)$ commute with the twisted differentials.
\end{proof}

\subsection{Hilsum-Skandalis maps for homotopic maps} 

Consider a continuous family $f_s: \overline{M} \to \overline{N}$ of smooth strongly stratum preserving maps.
Consider the pullback bundles $\widetilde{f}_s^*(\mathbb{B}^N)$. Homotopic maps induce vector bundle isomorphisms,
and hence there exists a continuous family $A_s: \widetilde{f}_0^*(\mathbb{B}^N) \to \widetilde{f}_s^*(\mathbb{B}^N)$
of vector bundle isomorphisms, leading to the commutative diagram explained in  Fig. \ref{pullbacks-figure} below

\begin{figure}[h!]
\begin{center}
\begin{tikzpicture}


\draw[->] (2,2) -- (3.5,2);
\draw[->] (1.5,1.5) -- (4,0);
\node at (1,2) {$\widetilde{f}_0^*(\mathbb{B}^N)$};
\node at (2.5,2.4) {$A_s$};

\draw[->] (5,0) -- (7,0);
\node at (4.5,0) {$\widetilde{M}$};
\node at (7.5,0) {$\widetilde{N}$};
\node at (6,-0.5) {$\widetilde{f}_s$};

\draw[->] (5.4,2) -- (6.8,2);
\node at (4.5,2) {$\widetilde{f}_s^*(\mathbb{B}^N)$};
\node at (7.5,2) {$\mathbb{B}^N$};
\node at (6,2.5) {$\mathbb{B}\widetilde{f}_s$};

\draw[->] (4.5,1.5) -- (4.5,0.5);
\draw[->] (7.5,1.5) -- (7.5,0.5);

\node at (4,1) {$\pi_s$};
\node at (1.5,1) {$\pi$};

\draw[->] (8,1.8) .. controls (8.5,1.5) and (8.5,0.5) .. (8,0.2);
\node at (9,1) {$\exp$};

\end{tikzpicture}
\end{center}
\caption{The pullback bundles $\widetilde{f}_s^*(\mathbb{B}^N)$ and $\widetilde{f}_0^*(\mathbb{B}^N)$.}
\label{pullbacks-figure}
\end{figure}

\noindent Similarly, there exists a continuous family $\mathscr{J}_s: \widetilde{f}_s^* \mathscr{E} \to \widetilde{f}_0^* \mathscr{E}$ 
of bundle isomorphisms, which by the same argument as in Lemma \ref{retract-lemma} are isometries of $\mathscr{N}\Gamma$-Hilbert module bundles,
commuting with the pullback connections. Motivated by Theorem \ref{HS-thm} we set (we change the construction slightly)
\begin{equation}\label{HS-prime}
\begin{split}
&{}^{w}HS(f_s)': C^\infty(\widetilde{N}, \Lambda^\ell ({}^{w}T^*N) \otimes \mathscr{E}) \to 
C^\infty(\widetilde{M}, \Lambda^\ell ({}^{w}T^*M)\otimes \widetilde{f}_0^*\mathscr{E}), \\ 
&{}^{w}HS(f_s)' \omega  := (-1)^{\ell} \, {}^{w}\pi_* \Bigl( \tau\bigl[\widetilde{f}_0^*(\mathbb{B}^N)\bigr] 
\wedge \mathscr{J}_s \circ \mathscr{J} \circ {}^{w}(\exp \circ \, \mathbb{B}\widetilde{f}_s \circ A_s)^* \omega \Bigr),
\end{split}
\end{equation}
where $\mathscr{J}$ is constructed in Lemma \ref{retract-lemma}.
As before, ${}^{w}HS(f_s)'$ extends to a bounded map between the $L^2$-completions. The following
proposition explains how these maps define a chain homotopy. 

\begin{prop}\label{chain-homotopy1} Write 
$p_s:= \exp \circ \, \mathbb{B}\widetilde{f}_s \circ A_s: \widetilde{f}_0^*(\mathbb{B}^N) \to \widetilde{N}$ and set
$$
p: \widetilde{f}_0^*(\mathbb{B}^N) \times [0,1] \to \widetilde{N}, \qquad (v,s) \mapsto p_s(v).
$$
Then ${}^{w}HS(f_1)' - {}^{w}HS(f_0)' = dK + Kd$ with $K$ bounded in $L^2$ and defined by
\begin{equation}
\begin{split}
&K: C^\infty(\widetilde{N}, \Lambda^* ({}^{w}T^*N)\otimes \mathscr{E}) \to C^\infty(\widetilde{M}, \Lambda^{*-1} ({}^{w}T^*M)\otimes \widetilde{f}_0^* \mathscr{E}), \\ 
&K \omega  := (-1)^{\ell} \, {}^{w}\pi_* \Bigl( \tau\bigl[\widetilde{f}_0^*(\mathbb{B}^N)\bigr]  \wedge 
\mathscr{J}_s \circ \mathscr{J} \circ \int_0^1 \iota_{\partial_s} \bigl( {}^{w}p^* \omega \bigr) \Bigr),
\end{split}
\end{equation}
\end{prop}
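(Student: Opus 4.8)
The plan is to run the classical Stokes / fibre-integration argument showing that homotopic maps induce chain-homotopic pullbacks, but carried out in the wedge category and intertwined with the flat isometries $\mathscr{J}$ and $\mathscr{J}_s$. First I would work on the total space $W := \widetilde{f}_0^*(\mathbb{B}^N) \times [0,1]$, which is again a manifold with corners (the $[0,1]$-factor carries no boundary-fibration data), with slice inclusions $j_s: \widetilde{f}_0^*(\mathbb{B}^N) \to W$, $v \mapsto (v,s)$, so that $p \circ j_s = p_s$. For any form $\eta$ on $W$, decomposing $\eta = \alpha_s + ds \wedge \beta_s$ with $\alpha_s,\beta_s$ free of $ds$, a direct computation yields the purely formal homotopy identity
\begin{equation*}
j_1^* \eta - j_0^* \eta = d \int_0^1 \iota_{\partial_s}\eta + \int_0^1 \iota_{\partial_s}(d\eta),
\end{equation*}
where $d$ on the right-hand side denotes the de Rham differential on $\widetilde{f}_0^*(\mathbb{B}^N)$. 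This step uses nothing about the wedge structure or the Mishchenko bundle.

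Next I would apply this identity to $\eta := \mathscr{J}_s \circ \mathscr{J} \circ {}^{w}p^*\omega$, i.e. the wedge pullback to $W$ of $\omega \in C^\infty(\widetilde{N},\Lambda^\ell({}^{w}T^*N)\otimes\mathscr{E})$ followed by the combined isometry, assembled into a bundle map over $W$ with values in the ($s$-independent) bundle $\widetilde{f}_0^*\mathscr{E}$. By Lemma \ref{pullback-lemma} the pullback ${}^{w}p^*$ commutes with the twisted differentials, and by Lemma \ref{retract-lemma} together with its family version stated before \eqref{HS-prime}, $\mathscr{J}$ and $\mathscr{J}_s$ commute with the pullback flat connections; here one must check that the combined isometry over $W$ commutes with the \emph{full} twisted differential on $W$, including the $\partial_s$-component, which once more is exactly flatness of the Mishchenko connection. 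Hence $d\eta = \mathscr{J}_s \circ \mathscr{J} \circ {}^{w}p^*(d_{\mathscr{E}}\omega)$, and $j_s^*\eta = \mathscr{J}_s \circ \mathscr{J} \circ {}^{w}(\exp \circ \, \mathbb{B}\widetilde{f}_s \circ A_s)^*\omega$ is precisely the integrand of ${}^{w}HS(f_s)'\omega$ before wedging with the Thom form and pushing forward.

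I would then wedge both sides of the homotopy identity with the closed, fibrewise compactly supported Thom form $\tau[\widetilde{f}_0^*(\mathbb{B}^N)]$ (pulled back from $\widetilde{f}_0^*(\mathbb{B}^N)$, hence $s$-independent and closed, so it commutes with $d$, $\iota_{\partial_s}$ and $\int_0^1$), multiply by $(-1)^\ell$, and apply the fibre integration ${}^{w}\pi_*$ over the open discs of $\pi: \widetilde{f}_0^*(\mathbb{B}^N) \to \widetilde{M}$, which is defined on forms with compact fibre support — guaranteed by $\tau[\widetilde{f}_0^*(\mathbb{B}^N)]$ — and commutes with $d_{\widetilde{f}_0^*\mathscr{E}}$ up to the usual sign. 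Collecting terms, the contribution of $d\int_0^1\iota_{\partial_s}\eta$ is $\pm\, d(K\omega)$ and the contribution of $\int_0^1\iota_{\partial_s}(d\eta)$ is $\pm\, K(d_{\mathscr{E}}\omega)$, and the $(-1)^\ell$ factors inserted in the definitions of ${}^{w}HS(f_s)'$ and of $K$ are exactly what makes the signs combine to $dK + Kd$. I expect the sign bookkeeping — the $(-1)^\ell$'s, the sign from moving $ds$ past forms, the sign convention for $\iota_{\partial_s}$, and the $(-1)^{\dim N}$ coming from commuting $d$ past ${}^{w}\pi_*$ — to be the only genuinely delicate point; everything else is a direct computation.

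Finally, boundedness of $K$ on the $L^2$-completions follows exactly as in Theorem \ref{HS-thm}: $K$ is the composition of the wedge pullback ${}^{w}p^*$ (bounded after the rescaling by $\rho_N^\ell/\rho_M^\ell$ used in the proof of Theorem \ref{HS-thm}, since $p$ is smooth strongly stratum preserving and the relevant total boundary function quotients are uniformly bounded), the isometry $\mathscr{J}_s\circ\mathscr{J}$, the bounded operations $\iota_{\partial_s}$ and $\int_0^1(\cdot)\,ds$ over the compact interval, multiplication by the compactly supported form $\tau[\widetilde{f}_0^*(\mathbb{B}^N)]$, and the fibre integration ${}^{w}\pi_*$ with integrand of compact fibre support. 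Hence the argument of \cite[Proposition 4.5]{Spess}, adapted to manifolds of bounded geometry, applies verbatim, with a harmless bounded rescaling factor and the extra $[0,1]$-integration, giving the claimed identity and boundedness of $K$.
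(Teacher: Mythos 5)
Your proposal is correct and follows essentially the same route as the paper: the paper's proof is exactly the standard homotopy-formula computation $d\int_0^1 \iota_{\partial_s}({}^{w}p^*\omega) = \int_0^1 \iota_{\partial_s}\bigl({}^{w}p^*(d\omega)\bigr) + {}^{w}p_1^*\omega - {}^{w}p_0^*\omega$, obtained by decomposing the differential on $\widetilde{f}_0^*(\mathbb{B}^N)\times[0,1]$ into slice and $ds$-components, after which the claim follows by wedging with the Thom form, applying $\mathscr{J}_s\circ\mathscr{J}$ and pushing forward via $\pi_*$ as in \eqref{HS-prime}. Your write-up is in fact more complete than the paper's, which leaves the compatibility with $\mathscr{J}_s\circ\mathscr{J}$, the sign bookkeeping and the $L^2$-boundedness of $K$ implicit.
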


\begin{proof}
Consider $\omega \in C^\infty(\widetilde{N}, \Lambda^\ell ({}^{w}T^*N)\otimes \mathscr{E})$. The statement follows by plugging the next computation
into \eqref{HS-prime}. We change the notation of our differentials $d_{\mathscr{E}}$ by changing the lower index to specify the underlying space and obtain
\begin{align*}
d_{\widetilde{f}_0^*(\mathbb{B}^N)} &\int_0^1 \iota_{\partial_s} \bigl( {}^{w}p^* \omega \bigr)
= \int_0^1 \iota_{\partial_s} d_{\widetilde{f}_0^*(\mathbb{B}^N)} \bigl( {}^{w}p^* \omega \bigr)
\\ =  \, &\int_0^1 \iota_{\partial_s} \Bigl( d_{\widetilde{f}_0^*(\mathbb{B}^N)\times [0,1]} \bigl( {}^{w}p^* \omega \bigr)
+ (-1)^{\ell+1} \frac{\partial \bigl( {}^{w}p^* \omega \bigr)}{\partial s} \wedge ds \Bigr)
\\ =  \, &\int_0^1 \iota_{\partial_s} \Bigl( {}^{w}p^* \bigl( d_{\widetilde{N}}\omega \bigr)
+ (-1)^{\ell+1} \frac{\partial \bigl( {}^{w}p^* \omega \bigr)}{\partial s} \wedge ds \Bigr)
\\ =  \, &\int_0^1 \iota_{\partial_s} \Bigl( {}^{w}p^* \bigl( d_{\widetilde{N}}\omega \bigr)\Bigr)
+ {}^{w}p_1^* \omega - {}^{w}p_0^* \omega.
\end{align*}
\end{proof}

\section{Stability of $L^2$-Betti numbers and Novikov-Shubin invariants}\label{stability-section}

Let us first recall the notion of homotopy equivalences between $\mathscr{N}\Gamma$-Hilbert complexes.
We refer to \cite[\S 1 and \S 2]{Lueck-book} and \cite[\S 4]{Gromov-Shubin} for more details. 
We continue in the notation fixed in Definition \ref{gamma-notions2}.

\begin{defn}\label{gamma-notions4} Let $\Gamma$ be a finitely generated discrete group
and recall Definition \ref{gamma-notions2}.
\begin{enumerate}
\item A homotopy between
two morphisms of $\mathscr{N}\Gamma$-Hilbert complexes $f,h:H_{\bullet}\rightarrow K_{\bullet}$ 
is a sequence of bounded linear operators $T_k :H_k\rightarrow 
K_{k-1}$ commuting with the $\Gamma$-action such that 
$$
f_k-h_k = T_{k+1}\circ d_k+d_{k-1}\circ T_k \ \textup{on} \ \dom(d_k) \subset H_k.
$$ 

\item Two $\mathscr{N}\Gamma$-Hilbert complexes $H_{\bullet}$ and $K_{\bullet}$ are said to be 
homotopy equivalent if there are morphisms $f:H_{\bullet}\rightarrow K_{\bullet}$ and 
$h:H_{\bullet}\rightarrow K_{\bullet}$ such that $f\circ h$ and $h\circ f$ are homotopic 
to the identity morphisms of $H_{\bullet}$ and $K_{\bullet}$ respectively. 
\end{enumerate}
\end{defn}

We now introduce homotopy equivalences
in the smoothly stratified setting.

\begin{defn}\label{hom-equiv-def} Consider two compact smoo\-thly stratified spaces $\overline{M}$ and $\overline{N}$.
We say that $\overline{M}$ and $\overline{N}$ are 
"smoothly stratified codimension-preserving homotopy equivalent"
if there exist two smooth strongly stratum preserving maps $f: \overline{M} \to \overline{N}$
and $h: \overline{N} \to \overline{M}$, such that $f\circ h \sim \textup{id}_{\overline{N}}$ and 
$h\circ f \sim \textup{id}_{\overline{M}}$ with homotopies given by continuous families of 
smooth strongly stratum preserving maps.
\end{defn}

\noindent
For more on homotopy equivalences in the smoothly stratified setting we refer the reader to \cite{ALMP3},
where, in particular, a result of approximation of smoothly stratified codimension-preserving 
homotopy equivalences by smooth ones is established.

\smallskip
\smallskip
\noindent
We show that an equivalence as in Definition \ref{hom-equiv-def} induces
a chain homotopy equivalence of minimal and maximal Hilbert complexes for 
$\overline{M}_\Gamma$ and $\overline{N}_\Gamma$. 

\begin{cor}\label{chain-corollary}
Let $\overline{M}$ and $\overline{N}$ be compact  smoothly stratified,
strongly stratum preserving homotopy equivalent in the sense of Definition \ref{hom-equiv-def}.
Let $\overline{N}_\Gamma$ be a Galois $\Gamma$-covering on $\overline{N}$.
Let $\mathcal{E}_{\overline{N}}$ be the associated Mishchenko bundle over $\overline{N}$.
Then the $\mathscr{N}\Gamma$-Hilbert complexes 
$$
(\dom_{ \max}(M, f^* \mathscr{E}_{\overline{N}}), d_{f^* \mathscr{E}_{\overline{N}}}) \ \textup{and} \
(\dom_{ \max}(N, \mathscr{E}_{\overline{N}}), d_{\mathscr{E}_{\overline{N}}}).
$$ 
are chain homotopy equivalent.\\
\end{cor}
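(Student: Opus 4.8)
The plan is to realize the chain homotopy equivalence by the Hilsum--Skandalis maps of Theorem \ref{HS-thm}. Write $\mathscr{E}:=\mathscr{E}_{\overline N}$ for the Mishchenko bundle over $\overline N$; its pullback $f^*\mathscr{E}$ is the Mishchenko bundle over $\overline M$ associated with the Galois $\Gamma$-covering $f^*\overline N_\Gamma\to\overline M$, so Theorem \ref{HS-thm} applies both to $f$ (twisting by $\mathscr{E}$) and to $h$ (twisting by $f^*\mathscr{E}$). Since $f\circ h\sim\mathrm{id}_{\overline N}$ through smooth strongly stratum preserving maps, the argument of Lemma \ref{retract-lemma} — parallel transport of the canonical flat connection along the homotopy — produces an isometry $\mathscr{K}\colon(f\circ h)^*\mathscr{E}\to\mathscr{E}$ of $\mathscr{N}\Gamma$-Hilbert module bundles commuting with the flat connections. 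Using $h^*f^*\mathscr{E}=(f\circ h)^*\mathscr{E}$, I would take $A:={}^{w}HS(f)$ and $B:=\mathscr{K}_*\circ{}^{w}HS(h)$. Both are bounded and intertwine the twisted differentials on the smooth cores $\Omega^*_{\max}$; since they are bounded and the maximal differentials are closed, a standard approximation argument shows they preserve the maximal domains, so they define morphisms of $\mathscr{N}\Gamma$-Hilbert complexes $A\colon(\dom_{\max}(N,\mathscr{E}),d_{\mathscr{E}})\to(\dom_{\max}(M,f^*\mathscr{E}),d_{f^*\mathscr{E}})$ and $B$ in the opposite direction, the $\Gamma$-equivariance being clear since the construction is natural in the Mishchenko bundle and $\mathscr{K}_*$ commutes with $\Gamma$ by Lemma \ref{retract-lemma}.

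It then remains to exhibit chain homotopies $B\circ A\sim\mathrm{id}$ and $A\circ B\sim\mathrm{id}$ in the sense of Definition \ref{gamma-notions4}. I would treat $B\circ A$ in three steps; the case $A\circ B$ is symmetric, using the homotopy $h\circ f\sim\mathrm{id}_{\overline M}$. \emph{Step 1 (functoriality up to chain homotopy):} one has ${}^{w}HS(h)\circ{}^{w}HS(f)\simeq{}^{w}HS(f\circ h)$, both sides being obtained by iterated pullback along the maps $\exp\circ\mathbb{B}\widetilde f$, $\exp\circ\mathbb{B}\widetilde h$ resp.\ by pullback along $\exp\circ\mathbb{B}\widetilde{f\circ h}$, followed by wedging with Thom forms and integrating out the disc fibres; associativity of pullback, Fubini for the iterated fibre integrations, and the fact that the composed geometric data is homotopic — through smooth strongly stratum preserving maps — to the geometric data of ${}^{w}HS(f\circ h)$ reduce the difference to a term $dK_1+K_1d$, by exactly the mechanism of Proposition \ref{chain-homotopy1}. \emph{Step 2 (homotopy invariance):} applying Proposition \ref{chain-homotopy1} to a homotopy from $\mathrm{id}_{\overline N}$ to $f\circ h$ — so that the reference map of that proposition is $\mathrm{id}_{\overline N}$, the two HS maps involved are maps into $(\dom_{\max}(N,\mathscr{E}),d_{\mathscr{E}})$, and the twisting-bundle identification built into the proposition is precisely $\mathscr{K}$ — yields $\mathscr{K}_*\circ{}^{w}HS(f\circ h)\simeq{}^{w}HS(\mathrm{id}_{\overline N})$ modulo a term $dK_2+K_2d$. \emph{Step 3 (normalization):} ${}^{w}HS(\mathrm{id}_{\overline N})\simeq\mathrm{id}$, since deforming $\exp\colon\mathbb{B}^N\to\widetilde N$ to the bundle projection $\pi_N$ along the radial retraction $r_t$ of Lemma \ref{retract-lemma} — again through smooth strongly stratum preserving maps — identifies ${}^{w}HS(\mathrm{id}_{\overline N})$ up to chain homotopy, via a mild variant of Proposition \ref{chain-homotopy1}, with $\pi_{N,*}\bigl(\tau[{}^{e}TN]\wedge\pi_N^*(\cdot)\bigr)$, and the latter equals the identity by the defining fibrewise-integration property of the Thom form. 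Chaining Steps 1--3 gives $B\circ A\sim\mathrm{id}$; the bounded chain homotopies extend from the smooth cores to the maximal domains by the same closedness argument used for $A$ and $B$.

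I expect the main obstacle to be Step 1 together with the bundle bookkeeping it entails: in contrast with the classical untwisted situation, the Mishchenko bundles and the flat isometries of Lemma \ref{retract-lemma} must be carried through every composition and homotopy, and one must check that all of these identifications — the one identifying $h^*f^*\mathscr{E}$ with $(f\circ h)^*\mathscr{E}$, the isometry $\mathscr{K}$ coming from $f\circ h\sim\mathrm{id}_{\overline N}$, and the identifications implicit in the composition formula of Step 1 — fit together coherently. By comparison, the analytic points are routine: boundedness of the maps and of the chain homotopies is unaffected by the wedge-to-edge rescaling factors, as already handled in the proof of Theorem \ref{HS-thm}, and the passage from smooth cores to maximal domains is a standard closedness argument.
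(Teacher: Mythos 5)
Your proposal is correct and follows the paper's strategy in all essentials: the Hilsum--Skandalis replacements as the chain maps, Proposition \ref{chain-homotopy1} to manufacture the homotopy operators along the homotopies $f\circ h\sim\mathrm{id}_{\overline N}$ and $h\circ f\sim\mathrm{id}_{\overline M}$, the radial deformation of $\exp$ to the bundle projection to show ${}^{w}HS(\mathrm{id})\simeq\mathrm{Id}$ (the paper's family $\exp_s$, evaluating geodesics at time $s\in[0,\delta]$, is exactly your retraction $r_t$ composed with $\exp$, and the projection formula $\pi_{N,*}(\tau\wedge\pi_N^*\omega)=\omega$ is used just as you say), and the closedness argument pushing the bounded operators from the smooth cores $\Omega^*_{\max}$ to $\dom_{\max}$ (for the homotopy operators $K$, which do not commute with $d$, the paper uses the identity $dK={}^{w}HS_1'-{}^{w}HS_0'-Kd$ to see that $K$ preserves the core; your phrasing glosses this slightly but the idea is the same). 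The one structural difference is your Step 1. The paper never composes ${}^{w}HS(h)$ with ${}^{w}HS(f)$: it applies Proposition \ref{chain-homotopy1} directly to the primed family ${}^{w}HS(u_s)'$ of \eqref{HS-prime}, with $u_s$ the homotopy from $\mathrm{id}_{\overline N}$ to $f\circ h$, so that ${}^{w}HS(f\circ h)'$ is handled as a single endomorphism of $(\dom_{\max}(N,\mathscr{E}),d_{\mathscr{E}})$ and shown to be chain homotopic to $\mathrm{Id}$ (your Steps 2 and 3); the factorization of this endomorphism, up to chain homotopy, as a morphism into the complex on $M$ followed by one coming back --- which is what the definition of chain homotopy \emph{equivalence} actually requires --- is left implicit, presumably as standard Hilsum--Skandalis functoriality from \cite{Hilsum-Skandalis} and \cite{Spess}. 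So your Step 1 is not wrong or redundant: it is precisely the ingredient the paper suppresses, you correctly identify it as the place where the bundle bookkeeping (coherence of $h^*f^*\mathscr{E}\cong(f\circ h)^*\mathscr{E}$ with the flat isometries of Lemma \ref{retract-lemma}) concentrates, and your sketch of it via iterated fibre integration and multiplicativity of the Thom form is the standard argument; a complete write-up would have to carry it out, but as a proof plan it is sound.
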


\begin{proof} 
Consider the two smooth strongly stratum preserving maps $f: \overline{M} \to \overline{N}$
and $h: \overline{N} \to \overline{M}$ that define the smoothly stratified codimension-preserving homotopy equivalence between 
$\overline{M}$ and $\overline{N}$. In particular, $f\circ h \sim \textup{id}_{\overline{N}}$ 
with the homotopy given by a continuous family $u_s: \overline{N} \to \overline{N}, s\in [0,1]$, of 
smooth strongly stratum preserving maps. Similarly, $h\circ f \sim \textup{id}_{\overline{M}}$ 
with the homotopy given by a continuous family $v_s: \overline{M} \to \overline{M}, s\in [0,1]$, of 
smooth strongly stratum preserving maps. We have
$$
u_1= f\circ h, u_0 = \textup{id}_{\overline{N}}, \qquad 
v_1= h\circ f, v_0 = \textup{id}_{\overline{M}}.
$$
Consider the Hilsum-Skandalis replacements ${}^{w}HS(u_s)'$ and ${}^{w}HS(v_s)'$ of the homotopies.
By Theorem \ref{HS-thm} these maps are bounded on the $L^2$ completions, commute with the twisted differential on smooth sections and hence act between $\Omega^{*}_{\max}(M,\mathscr{E})$ and 
$\Omega^{*}_{\max}(N,\mathscr{E})$. Those spaces form core subdomains for 
$\dom^*_{\max}(M, \mathscr{E})$and $\dom^*_{\max}(N, \mathscr{E})$, respectively. 
Hence we obtain bounded maps between their graph closures, the maximal Hilbert complexes
\begin{align*}
&{}^{w}HS(u_s)': (\dom_{\max}(N, \mathscr{E}), d_{\mathscr{E}}) \to (\dom_{\max}(N, \mathscr{E}), d_{\mathscr{E}}), \\
&{}^{w}HS(v_s)': (\dom_{\max}(M, \mathscr{E}), d_{\mathscr{E}}) \to (\dom_{\max}(M, \mathscr{E}), d_{\mathscr{E}}).
\end{align*}
Moreover, by Proposition \ref{chain-homotopy1}, we find for appropriate $K_u, K_v$
\begin{equation}\label{HK}
\begin{split}
&{}^{w}HS(f\circ h)' - {}^{w}HS(\textup{id}_{\overline{N}})'  \equiv {}^{w}HS(u_1)' - {}^{w}HS(u_0)' = d_NK_u + K_ud_N, \\
&{}^{w}HS(h \circ f)' - {}^{w}HS(\textup{id}_{\overline{M}})' \equiv {}^{w}HS(v_1)' - {}^{w}HS(v_0)' = d_MK_v + K_vd_M.
\end{split}
\end{equation}
Note that $K_u$ and $K_v$ do not commute with the differential and hence their boundedness in the $L^2$ spaces, as asserted in 
Proposition \ref{chain-homotopy1}, does not yet imply that they preserve the minimal and maximal domains. However, their relation 
(see \eqref{HK}) with $HS(u_s)'$ and $HS(v_s)'$, which do preserve the domains, imply 
\begin{equation}\label{K}
\begin{split}
&K_u: \dom^*_{\max}(N, \mathscr{E}) \to \dom^{*-1}_{\max}(N, \mathscr{E}),\\
&K_v: \dom^*_{\max}(M, \mathscr{E}) \to \dom^{*-1}_{\max}(M,  \mathscr{E}).
\end{split}
\end{equation}
Indeed we know that $K_u:L^2 \Omega^* (N, \mathscr{E})\rightarrow L^2 \Omega^* (N, \mathscr{E})$ is bounded and, by \eqref{HK}, we have $$d_NK_u={}^{w}HS(u_1)' - {}^{w}HS(u_0)'- K_ud_N.$$ Note that the above equality implies immediately that 
$$
K_u(\Omega^{*}_{\max}(N,\mathscr{E}))\subset \Omega^{*}_{\max}(N,\mathscr{E})).
$$ 
Since, in addition, $K_u:L^2 \Omega^* (N, \mathscr{E})\rightarrow L^2 \Omega^* (N, \mathscr{E})$ is bounded and $\Omega^{*}_{\max}(N,\mathscr{E})$ is a core subdomain for $\dom^*_{\max}(N, \mathscr{E})$, we can conclude that $K_u$ preserves $\dom^*_{\max}(N, \mathscr{E})$. 
Note that we can not argue in the same way for the minimal domain, since $K_u$
does not preserve compact support. \medskip

If we had ${}^{w}HS(\textup{id}_{\overline{N}})' = \textup{Id}$ and ${}^{w}HS(\textup{id}_{\overline{M}})' = \textup{Id}$,
this would already mean that the $\mathscr{N}\Gamma$-Hilbert complexes $(\dom_{\max}(M, \mathscr{E}), d_{\mathscr{E}})$ and 
$(\dom_{\max}(N), d_{\mathscr{E}})$ are chain homotopy equivalent
as in Definition \ref{gamma-notions4}. This would finish the proof. However, this is not the case. In order to 
compute $({}^{w}HS(\textup{id}_{\overline{N}})' - \textup{Id})$ as well as $({}^{w}HS(\textup{id}_{\overline{M}})' - \textup{Id})$,
we consider the exponential maps as above (recall that the disc fibres in the bundles $ \mathbb{B}^N$ and $\mathbb{B}^M$ are 
of radius $\delta$, some lower bound for the injectivity radius)
\begin{align*}
&\exp^N: \mathbb{B}^N |_N \to N, \qquad V \mapsto \gamma_V(\delta), \\
&\exp^M: \mathbb{B}^M |_M \to M \qquad V \mapsto \gamma_V(\delta),
\end{align*}
where $\gamma_V$ denotes the geodesic with respect to the complete edge metrics on either $N$ or $M$. 
Instead of evaluating the geodesics at $s=\delta$, we evaluate them at any $s \in [0,\delta]$ and define
\begin{align*}
&\exp_s^N: \mathbb{B}^N |_N \to N, \qquad V \mapsto \gamma_V(s), \\
&\exp_s^M: \mathbb{B}^M |_M \to M \qquad V \mapsto \gamma_V(s),
\end{align*}
Applying Proposition \ref{chain-homotopy1} to $p_s$, that is defined in terms of the extensions
$\exp^N_s: \mathbb{B}^N \to \widetilde{N}$
and $\exp^M_s: \mathbb{B}^M \to \widetilde{M}$, gives 
\begin{equation}\label{HK2}
\begin{split}
&{}^{w}HS(\textup{id}_{\overline{N}})' - \textup{Id} = d_{\mathscr{E}}K'_u + K'_ud_{\mathscr{E}}, \qquad K'_u: 
\dom^*_{\max}(N, \mathscr{E}) \to \dom^{*-1}_{ \max}(N, \mathscr{E}),\\
&{}^{w}HS(\textup{id}_{\overline{M}})' - \textup{Id} = d_{\mathscr{E}}K'_v + K'_vd_{\mathscr{E}} \qquad K'_v: 
\dom^*_{ \max}(M, \mathscr{E}) \to \dom^{*-1}_{ \max}(M, \mathscr{E}).
\end{split}
\end{equation}
Combining \eqref{HK}, \eqref{K} and \eqref{HK2} proves the statement. 
\end{proof}

\subsection{Proof of stability}

Our proof is now a consequence of the abstract stability result in 
\cite[Proposition 4.1]{Gromov-Shubin}, see also \cite[Theorem 2.19]{Lueck-book}, which we now state for convenience.

\begin{thm}\label{Gromov-Shubin}
Consider two $\mathscr{N}\Gamma$ Hilbert complexes $(\dom,d)$
and $(\dom',d')$. If they are chain homotopy equivalent as $\mathscr{N}\Gamma$ Hilbert complexes, then 
their spectral density functions $F_*(\lambda, (\dom,d))$ and $F_*(\lambda, (\dom',d'))$ 
are dilatationally equivalent, cf. \cite[Definition 2.7]{Lueck-book}, and in particular we have the following.
\begin{enumerate}
\item The Betti numbers $b^*_{(2),\Gamma}(\dom,d)$ and $b^*_{(2),\Gamma}(\dom',d')$ are equal.
\item $(\dom,d)$ is Fredholm iff $(\dom',d')$ is so and the Novikov-Shubin invariants $\alpha_*(\dom,d)$ and $\alpha_*(\dom',d')$ are equal.

\end{enumerate}
\end{thm}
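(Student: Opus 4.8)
The plan is to reduce the whole statement to one assertion --- that the spectral density functions $F_*(\lambda,(\dom,d))$ and $F_*(\lambda,(\dom',d'))$ are \emph{dilatationally equivalent} in the sense of \cite[Definition 2.7]{Lueck-book}, i.e.\ that for some $C\ge1$, $\varepsilon>0$ one has $F_k(C^{-1}\lambda,(\dom,d))\le F_k(\lambda,(\dom',d'))\le F_k(C\lambda,(\dom,d))$ for all $k$ and $\lambda\in(0,\varepsilon)$ --- and then to extract the three conclusions by elementary facts about monotone functions. For (1): since each $F_k$ is non-decreasing and right-continuous at $0$ with $F_k(0,H_\bullet)=b^k_{(2),\Gamma}(H_\bullet)$, letting $\lambda\to0^+$ in the two inequalities forces equality of the $L^2$-Betti numbers. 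Fredholmness of a complex means $F_k(\lambda,\cdot)<\infty$ for some (equivalently, all small) $\lambda>0$ and is plainly a dilatation invariant, which gives the first half of (2). For the Novikov-Shubin invariants, set $\phi(\lambda):=F_{k-1}(\lambda,(\dom,d))-F_{k-1}(0,(\dom,d))$ and let $\psi$ be the analogue for $(\dom',d')$; by (1) the values $F_{k-1}(0,\cdot)$ coincide, so dilatational equivalence yields $\phi(C^{-1}\lambda)\le\psi(\lambda)\le\phi(C\lambda)$ near $0$. The exceptional case $\phi\equiv0$ near $0$ (i.e.\ $\alpha_k=\infty^+$) is clearly preserved; otherwise, taking logarithms, dividing by $\log\lambda<0$, and using $\log(C^{\pm1}\lambda)/\log\lambda\to1$, the $\liminf$ and $\limsup$ of $\log\phi(\lambda)/\log\lambda$ and $\log\psi(\lambda)/\log\lambda$ agree, so the limit defining $\alpha_k$ exists for one complex iff it does for the other and the values coincide. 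This settles (2).

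It remains to prove the dilatational equivalence from a chain homotopy equivalence. I would fix mutually homotopy-inverse morphisms $f:(\dom,d)\to(\dom',d')$ and $g:(\dom',d')\to(\dom,d)$ of $\mathscr{N}\Gamma$-Hilbert complexes together with the chain homotopies $T,S$, so that $(gf)_k-\mathrm{Id}=T_{k+1}d_k+d_{k-1}T_k$ and $(fg)_k-\mathrm{Id}=S_{k+1}d'_k+d'_{k-1}S_k$, and fix a common bound $C\ge1$ on the operator norms of $f_\bullet,g_\bullet,T_\bullet,S_\bullet$. It then suffices to show $F_k(\lambda,(\dom',d'))\le F_k(2C^2\lambda,(\dom,d))$ for $\lambda\le 1/(2C)$; the symmetric inequality, obtained by exchanging the roles of the two complexes (using $f,T$ in place of $g,S$), then gives dilatational equivalence with $C'=2C^2$ on $(0,1/(2C))$.

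To prove that inequality I would take an arbitrary admissible submodule $L\in\mathcal{L}((d'_k)^\perp,\lambda)$, i.e.\ a closed $\mathscr{N}\Gamma$-submodule of $\dom(d'_k)\cap\overline{\mathrm{im}(d'_{k-1})}^\perp$ with $\|d'_ku\|\le\lambda\|u\|$ for $u\in L$, transport it by $g_k$, and then project: let $P$ be the orthogonal projection onto $\overline{\mathrm{im}(d_{k-1})}^\perp$ and set $L':=\overline{Pg_k(L)}$. Since $g$ is a morphism, $g_k(L)\subset\dom(d_k)$ with $d_kg_kv=g_{k-1}d'_kv$, hence $\|d_kg_kv\|\le C\lambda\|v\|$; and $d_k$ vanishes on $\overline{\mathrm{im}(d_{k-1})}\subset\ker d_k$, so $Pg_k(L)\subset\dom(d_k^\perp)$ with $\|d_k^\perp(Pg_kv)\|=\|d_kg_kv\|\le C\lambda\|v\|$.

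The key point --- and the step I expect to be the main obstacle --- is the von Neumann dimension bookkeeping: a priori $Pg_k$ could collapse $L$ or drop dimension, which would break the desired inequality. This is handled by the lower bound $\|Pg_kv\|\ge C^{-1}(1-C\lambda)\|v\|$ for $v\in L$: for $\lambda\le1/(2C)$ it makes $Pg_k|_L$ bounded below, so $L'$ is a genuine closed submodule with $\dim_\Gamma L'\ge\dim_\Gamma L$, and it turns the previous bound into $\|d_k^\perp u\|\le 2C^2\lambda\|u\|$ for $u\in Pg_k(L)$, which passes to $L'$ because $d_k^\perp$ is closed --- so $L'\in\mathcal{L}(d_k^\perp,2C^2\lambda)$, and taking the supremum over $L$ gives the inequality. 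To get the lower bound one uses $f$ and the homotopy $S$ in a ping-pong fashion: writing $g_kv=a+b$ with $a\in\overline{\mathrm{im}(d_{k-1})}$ and $b=Pg_kv$, one has $f_ka\in\overline{\mathrm{im}(d'_{k-1})}$ (again because $f$ is a morphism), so applying the orthogonal projection onto $\overline{\mathrm{im}(d'_{k-1})}^\perp$ to the identity $f_kg_kv=v+d'_{k-1}S_kv+S_{k+1}d'_kv$ --- which annihilates $f_ka$ and $d'_{k-1}S_kv$, fixes $v$ (as $v\in\overline{\mathrm{im}(d'_{k-1})}^\perp$), and leaves a term of norm $\le C\lambda\|v\|$ from $S_{k+1}d'_kv$ --- yields $(1-C\lambda)\|v\|\le\|f_kb\|\le C\|b\|$. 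This is essentially the argument of \cite[Proposition 4.1]{Gromov-Shubin} (see also \cite[Theorem 2.19]{Lueck-book}), the only genuinely delicate point being the interplay of the two orthogonal projections with $\dim_\Gamma$ just described, where both chain maps and both homotopies are really used.
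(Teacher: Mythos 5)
The paper offers no proof of this theorem at all: it is stated ``for convenience'' as a quotation of \cite[Proposition 4.1]{Gromov-Shubin} and \cite[Theorem 2.19]{Lueck-book}, so there is nothing in the paper to compare against. Your argument is a correct, self-contained reconstruction of the standard proof from those references --- the reduction to dilatational equivalence, the transport of an admissible submodule $L$ by $g_k$ followed by projection onto $\overline{\mathrm{im}(d_{k-1})}^{\perp}$, and the ping-pong lower bound $\|Pg_kv\|\ge C^{-1}(1-C\lambda)\|v\|$ via the homotopy identity are exactly the mechanism of Gromov--Shubin/L\"uck; the only point you use without comment is the right-continuity of $F_k$ at $0$ (needed to pass from dilatational equivalence to equality of the $L^2$-Betti numbers), which is itself a standard consequence of the spectral-family description of $F_k$ and normality of the von Neumann trace.
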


We can now prove our second main result, Theorem \ref{main2}.

\begin{proof}[Proof of Theorem \ref{main2}] 

\textbf{Step 1)} Using (in both lines below) Proposition \ref{minmax-thm} for the first equality and 
Corollary \ref{chain-corollary} for the second equality (this is the only place where the
Hilsum-Skandalis replacement is used), we know that 
\begin{align*}
b^*_{(2),\mathrm{max}}(\overline{N}_{\Gamma})
&= b^*_{(2),\mathrm{max}}(\overline{N},\mathcal{E}_{\overline{N}_{\Gamma}})
= b^*_{(2),\mathrm{max}}(\overline{M},f^*\mathcal{E}_{\overline{N}_{\Gamma}}), \\
\alpha_{*,\mathrm{max}}(\overline{N}_{\Gamma})
&= \alpha_{*,\mathrm{max}}(\overline{N},\mathcal{E}_{\overline{N}_{\Gamma}})
= \alpha_{*,\mathrm{max}}(\overline{M},f^*\mathcal{E}_{\overline{N}_{\Gamma}}).
\end{align*}

\noindent \textbf{Step 2)} We claim that $f^*\mathcal{E}_{\overline{N}_{\Gamma}} \cong \mathcal{E}_{f^*\overline{N}_{\Gamma}}$ and that the 
isomorphism commutes with the natural flat pullback connections. The proof amounts to understanding the isomorphism explicitly. 
Consider an atlas $\{U_\alpha\}_{\alpha}$ on $\overline{N}$, consisting of trivializing neighborhoods for the Galois $\Gamma$-covering 
$p:\overline{N}_{\Gamma} \to \overline{N}$. The covering comes with local trivializations 
$$
\phi_\alpha: \Bigl. \overline{N}_{\Gamma} \Bigr|_{U_\alpha} \to U_\alpha \times \Gamma, \quad x \mapsto (p(x),\phi_\alpha(x)), 
$$
and transition functions $\phi_{\alpha, \beta}: U_\alpha \cap U_\beta \to \Gamma$. Let us recall the construction of the associated flat bundle 
$\mathcal{E}_{\overline{N}_{\Gamma}} = \overline{N}_{\Gamma} \times_\Gamma \ell^2\Gamma$ in order to fix notation. The 
Mishchenko bundle is defined as a set of equivalence classes 
$$
\mathcal{E}_{\overline{N}_{\Gamma}} := \{ [(x,v)]  \subset \overline{N}_{\Gamma} \times \ell^2\Gamma
\mid  \forall_{\gamma \in \Gamma}: (x,v) \sim (\gamma \cdot x, \gamma^{-1} \cdot v)\},
$$
where $\Gamma$ acts naturally on $\overline{N}_{\Gamma}$ and $\ell^2\Gamma$.
Its bundle structure comes from the local trivializations
(the transition functions are given by the action of $\phi_{\alpha, \beta}$ on $\ell^2\Gamma$)
$$
\Phi_\alpha: \Bigl. \mathcal{E}_{\overline{N}_{\Gamma}} \Bigr|_{U_\alpha} \to U_\alpha \times \ell^2\Gamma, \quad 
[(x,v)] \mapsto (p(x),\phi_\alpha(x) \cdot v).
$$
We now consider the pullbacks of these objects by $f$: 
the pullback $f^*\overline{N}_{\Gamma}$ is a Galois $\Gamma$-covering over $\overline{M}$,
and the pullback $f^*\mathcal{E}_{\overline{N}_{\Gamma}}$ as a bundle of Hilbert modules. Both are defined as a set as follows
\begin{align*}
&f^*\overline{N}_{\Gamma} := \left\{(q,x) \in \overline{M} \times \Bigl. \overline{N}_{\Gamma} \mid 
f(q) = p(x) \right\}, \\
&f^*\mathcal{E}_{\overline{N}_{\Gamma}} :=  \left\{(q; [(x,v)]) \in \overline{M} \times \Bigl. \mathcal{E}_{\overline{N}_{\Gamma}} 
\mid f(q) = p(x) \right\}.
\end{align*}
Their covering, respectively the bundle structures are given in terms of the atlas $\{f^{-1}(U_\alpha\}_{\alpha})$ on $\overline{M}$
of trivializing neighborhoods and local trivializations
\begin{align*}
&f^* \phi_\alpha: \Bigl. f^*\overline{N}_{\Gamma} \Bigr|_{f^{-1}(U_\alpha)} \to f^{-1}(U_\alpha) \times \Gamma, \quad (q,x) \mapsto (q,\phi_\alpha(x)), \\
&f^*\Phi_\alpha: \Bigl. f^*\mathcal{E}_{\overline{N}_{\Gamma}} \Bigr|_{f^{-1}(U_\alpha)} \to f^{-1}(U_\alpha) \times \ell^2\Gamma, 
\quad (q;[(x,v)]) \mapsto (q,\phi_\alpha(x)\cdot v), 
\end{align*}
and transition functions given in both cases by $f^*\phi_{\alpha, \beta}: f^{-1}(U_\alpha) \cap f^{-1}(U_\beta) \to \Gamma$.
Our task is to compare $f^*\mathcal{E}_{\overline{N}_{\Gamma}}$ to the bundle 
$\mathcal{E}_{f^*\overline{N}_{\Gamma}} := f^*\overline{N}_{\Gamma} \times_\Gamma \ell^2 \Gamma$. The latter
bundle is defined as a set of equivalence classes
$$
\mathcal{E}_{f^*\overline{N}_{\Gamma}} := \Bigl\{ \Bigl[\bigl((q,x); v\bigr)\Bigr]  \subset f^*\overline{N}_{\Gamma} \times \ell^2\Gamma
\mid  \forall_{\gamma \in \Gamma}: \bigl((q,x); v\bigr) \sim \bigl((q,\gamma \cdot x), \gamma^{-1} \cdot v\bigr) \Bigr\}.
$$
Its bundle structure is given by the local trivializations
\begin{align*}
\Psi_\alpha: \Bigl. \mathcal{E}_{f^*\overline{N}_{\Gamma}} \Bigr|_{f^{-1}(U_\alpha)} \to f^{-1}(U_\alpha) \times \ell^2\Gamma, 
\quad \Bigl[\bigl((q,x); v\bigr)\Bigr] \mapsto (q,\phi_\alpha(x)\cdot v), 
\end{align*}
with transition functions given as before for $f^*\mathcal{E}_{\overline{N}_{\Gamma}}$ 
by $f^*\phi_{\alpha, \beta}$. We can now define an isomorphism 
$f^*\mathcal{E}_{\overline{N}_{\Gamma}} \cong \mathcal{E}_{f^*\overline{N}_{\Gamma}}$, namely
$$
I: \mathcal{E}_{f^*\overline{N}_{\Gamma}} \to f^*\mathcal{E}_{\overline{N}_{\Gamma}}, 
\quad \Bigl[\bigl((q,x); v\bigr)\Bigr] \mapsto (q;[(x,v)]).
$$
One can easily check that this is well-defined, since
$$
I \bigl((q,\gamma\cdot x); \gamma^{-1}\cdot v\bigr) = \bigl(q; (\gamma\cdot x, \gamma^{-1}\cdot v)\bigr) \in (q;[(x,v)]).
$$
By construction we have a commutative diagram as in Figure \ref{iso-pullbacks}.
Since $I$ is acting as identity in local trivializations, it commutes with the natural flat pullback connections and hence we
have a functoriality result
\begin{align*}
b^*_{(2),\mathrm{max}}(\overline{M},f^*\mathcal{E}_{\overline{N}_{\Gamma}})
&=b^*_{(2),\mathrm{max}}(\overline{M},\mathcal{E}_{f^*\overline{N}_{\Gamma}}), \\
\alpha_{*,\mathrm{max}}(\overline{M},f^*\mathcal{E}_{\overline{N}_{\Gamma}})
&=\alpha^*_{*,\mathrm{max}}(\overline{M},\mathcal{E}_{f^*\overline{N}_{\Gamma}}).
\end{align*}

\begin{figure}[h]
\begin{center}
\begin{tikzpicture}

\draw[->] (5.6,0) -- (6.8,0);
\node at (4,0) {$f^{-1}(U_\alpha) \times \ell^2\Gamma$};
\node at (8.5,0) {$f^{-1}(U_\alpha) \times \ell^2\Gamma$};
\node at (6.3,-0.5) {$\textup{id}$};

\draw[->] (5.3,2) -- (7,2);
\node at (4.3,2) {$\mathcal{E}_{f^*\overline{N}_{\Gamma}}$};
\node at (8.2,2) {$f^*\mathcal{E}_{\overline{N}_{\Gamma}}$};
\node at (6.3,2.5) {$I$};

\draw[->] (4,1.5) -- (4,0.5);
\draw[->] (8.2,1.5) -- (8.2,0.5);

\node at (3.3,1) {$\Psi_\alpha$};
\node at (9,1) {$f^*\Phi_\alpha$};

\end{tikzpicture}
\end{center}

\caption{Isomorphism $f^*\mathcal{E}_{\overline{N}_{\Gamma}} \cong \mathcal{E}_{f^*\overline{N}_{\Gamma}}$ in local trivializations}
\label{iso-pullbacks}
\end{figure}

\noindent \textbf{Step 3)} Now we have 
\begin{align*}
b^*_{(2),\mathrm{max}}(\overline{N},\mathcal{E}_{f^*\overline{N}_{\Gamma}})=b_{(2),\mathrm{max}}(\overline{M},\mathcal{E}_{\overline{M}_{\Gamma}}) &= b^*_{(2),\mathrm{max}}(\overline{M}_{\Gamma}), \\
\alpha_{*,\mathrm{max}}(\overline{N},\mathcal{E}_{f^*\overline{N}_{\Gamma}})=\alpha_{*,\mathrm{max}}(\overline{M},\mathcal{E}_{\overline{M}_{\Gamma}}) &= \alpha_{*,\mathrm{max}}(\overline{M}_{\Gamma}),
\end{align*} 
since $\overline{M}_{\Gamma}$ and $f^*\overline{N}_{\Gamma}$ are isomorphic as Galois $\Gamma$-coverings.\\

\noindent \textbf{Step 4)} Since both $\overline{M}$ and $\overline{N}$ are 
oriented, we have 
$$
*\Delta_{k,\mathrm{rel}}=\Delta_{m-k,\mathrm{abs}}*
$$ where $*$ denotes the Hodge star operator,  
$m:=\dim(\overline{M})=\dim(\overline{N})$ and $k\in \{0,...,m\}$. Since 
\begin{align*}
&*: L^2 \Omega^k (\overline{N}, \mathcal{E}_{\overline{N}_{\Gamma}})
\rightarrow L^2 \Omega^{m-k} (\overline{N}, \mathcal{E}_{\overline{N}_{\Gamma}}), \\
&*:L^2 \Omega^k (\overline{M}, \mathcal{E}_{\overline{M}_{\Gamma}})
\rightarrow L^2 \Omega^{m-k} (\overline{M}, \mathcal{E}_{\overline{M}_{\Gamma}}),
\end{align*} 
are  isometries of Hilbert spaces we can conclude that 
$$
\begin{aligned}
b^k_{(2),\min}(\overline{M}_{\Gamma})=b^{m-k}_{(2),\max}(\overline{M}_{\Gamma})=b^{m-k}_{(2),\max}(\overline{N}_{\Gamma})= b^k_{(2),\min}(\overline{N}_{\Gamma})\\
\alpha_{k,\min}(\overline{M}_{\Gamma}) =\alpha_{m-k,\max}(\overline{M}_{\Gamma}) = \alpha_{m-k,\max}(\overline{N}_{\Gamma}) =\alpha_{k,\min}(\overline{N}_{\Gamma}).
\end{aligned}
$$

\end{proof}

\begin{proof}[Proof of Corollary \ref{universal-c}]

This follows immediately by the above proof and Corollary \ref{pullback-u}.

\end{proof}

\end{document}